\numberwithin{equation}{section} 
\theoremstyle{plain}
\newtheorem{thm}{Theorem}[section]
\newtheorem{lem}[thm]{Lemma}
\newtheorem{pro}[thm]{Proposition}
\newtheorem{cor}[thm]{Corollary}
\newtheorem{ex}[thm]{Example}
\newtheorem{de}[thm]{Definition}
\newtheorem{rem}[thm]{Remark}
\def\R {{\Bbb R}}
\def\N {{\Bbb N}}
\def\Z {{\Bbb Z}}
\def\L {{\mathcal L}}
\def\M {{\mathcal M}}
\def\F {{\mathcal F}}
\def\I {{\mathcal I}}
\def\A {{\mathcal A}}
\def\C {{\mathcal C}}
\def\ba{{\bf a}}
\def\bc{{\bf c}}
\def\bq{{\bf q}}
\begin{document}
\baselineskip 15pt
\title{Multifractal formalism for almost all self-affine measures}

\author{Julien Barral}
\address{LAGA (UMR 7539), D\'epartement de Math\'ematiques, Institut Galil\'ee, Universit\'e Paris 13, 99 avenue Jean-Baptiste Cl\'ement , 93430  Villetaneuse, France}
\email{barral@math.univ-paris13.fr}
\author{De-Jun Feng}
\address{
Department of Mathematics\\ The Chinese University of Hong Kong\\ Shatin,  Hong Kong\\ } \email{djfeng@math.cuhk.edu.hk}

\keywords{Self-affine measures, Multifractal formalism,  Thermodynamic formalism}
\thanks {
2000 {\it Mathematics Subject Classification}: 28A80, 37C45}

\date{}

\begin{abstract}
We conduct the multifractal analysis of self-affine measures for ``almost all'' family of affine maps. Besides partially extending
Falconer's formula of $L^q$-spectrum outside  the range $1< q\leq 2$, the multifractal formalism is also partially verified.
\end{abstract}

\maketitle
\section{Introduction}\label{S-1}

Multifractal analysis  in $\R^d$ aims at describing the geometry of  H\"older singularities for positive Borel measures. Specifically, given a compactly supported positive Borel measure $\mu$ on $\R^d$, one is interested in the Hausdorff dimensions of the level sets
$$
E(\mu, \alpha):=\left\{x\in \R^d:\; \lim_{r\to 0}\frac{\log \mu(B_r(x))}{\log r}=\alpha\right\}\quad (\alpha\ge 0),
$$
where $B_r(x)$ stands for the Euclidean closed ball with radius $r$ centered at $x$.  According to heuristic arguments developed by physicists \cite{FrPa,Halsey}, in presence of self-similarity, one should have
\begin{equation}\label{formalism}
\dim_H E(\mu,\alpha)=\inf_{q\in\R}(\alpha q-\tau(\mu,q)),
\end{equation}
(a negative dimension meaning that $E(\mu,\alpha)=\emptyset$) where $\tau(\mu,\cdot)$ is the $L^q$-spectrum defined as
$$
\tau(\mu,q)=\liminf_{r\to 0}\frac{\log \sup\sum_{j}\mu(B_r(x_j))^q}{\log r},
$$
the supremum being taken over all families of disjoint balls $\{B_r(x_j)\}_j$ with radius $r$ and centers $x_j\in {\rm supp} (\mu)$.

When equality \eqref{formalism} holds, one says that the multifractal formalism holds for $\mu$ at $\alpha$.   So far the multifractal structures of the so-called self-similar measures and more generally self-conformal measures and Gibbs measures on self-conformal sets or conformal repellers have been studied intensively, the validity of the multifractal formalism being observed over wide or even maximal  ranges of exponents $\alpha$ for large subclasses of these measures (see, e.g., \cite{CLP87,Ran89,BMP92,CaMa92,Ols95,Pat97, Pes97,O99, LN99,Shm05,Tes06,Fen07,FenL09,Fen11, JSS11} and the references in \cite{FenL09}).

Much less is known for self-affine measures (to be defined below), except when they are supported on self-affine Sierpinski sponges, or on invariant subsets of such sponges satisfying specification property \cite{Kin95,Ols98,BaMe07,BaFe09,JorRam09}. However, for such measures, one knows that in general the previous multifractal formalism fails, but a refined one  (which is more  related to Hausdorff measures and introduced independently in \cite{B94} and \cite{Ols95}) holds. This is closely related to the fact that the Hausdorff and box dimension of self-affine Sierpinski sponges do not coincide in general.

This paper studies the validity of the multifractal formalism for  ``almost all'' self-affine measures. First of all, let us recall the definition of self-affine measures. Let $S_1, \ldots, S_m: \R^d\to \R^d$ be a family of contracting mappings. Such a family is known as an {\it iterated function system} (IFS). It is well known \cite{Hut81}
that there exists a unique non-empty compact set $F\subset \R^d$, called the {\it attractor} of the IFS,  satisfying
$$F=\bigcup_{i=1}^m S_i(F);$$
Moreover, for any probability vector $(p_1,\ldots, p_m)$ (that is, $p_i>0$ and $\sum_{i=1}^m p_i=1$), there exists a unique Borel probability measure $\mu$ supported on
$F$ such that
$$\mu=\sum_{i=1}^m p_i \mu\circ S_i^{-1}.$$

Here we assume that $S_1,\ldots, S_m$ are affine transformations, in which case, $F$ is called a {\it self-affine set}, and $\mu$ is called a {\it self-affine measure} (self-similar measures correspond to the particular case where the $S_i$ are similitudes). In
particular, we let $S_i=T_i+a_i$ where $T_1, \ldots, T_m$ are non-singular contracting linear mappings and $a_1, \ldots, a_m$ are translation parameters. In \cite{Fal88}
Falconer obtained a formula for the Hausdorff dimension and box-counting dimension of the attractor of the IFS $\{T_i+a_i\}_{i=1}^m$ for  almost all parameter $(a_1,\ldots,
a_m)\in \R^{md}$ in the sense of $md$-dimensional Lebesgue measure, under an additional assumption that $\|T_i\|<1/3$ for all $i$; these dimensions coincide. Later, Solomyak \cite{Sol98} proved that
the  assumption $\|T_i\|<1/3$ for all $i$ can be weakened to $\|T_i\|<1/2$ for all $i$.

In \cite{Fal99}, Falconer obtained the formula of the $L^q$-spectrum of the self-affine measure associated to the IFS $\{T_i+a_i\}_{i=1}^m$ and the probability vector
$(p_1,\ldots, p_m)$ for $1<q\leq 2$ and almost all $(a_1,\ldots, a_m)\in \R^{md}$, still in the sense of $md$-dimensional Lebesgue measure and under the assumption $\|T_i\|<1/2$ for
all $i$.

Before stating Falconer's formula, let us first introduce some definitions.  Let $T$ be a non-singular linear mapping from $\R^d$
to $\R^d$. The {\it singular values} $\alpha_1\geq \alpha_2\geq \cdots\geq \alpha_d$ of $T$ are the positive square roots of the eigenvalues of $T^*T$.
\begin{de}\cite{Fal88}
\label{de-1.1} The singular value function $\phi^s(T)$ is defined for $s>0$ by
$$
\phi^s(T)=\left \{
\begin{array}{ll}
\alpha_1\ldots \alpha_{k-1} \alpha^{s-k+1}_k, & \mbox{ if } k-1<s\leq k\leq d,\\ \mbox{}\\ (\alpha_1\ldots \alpha_d)^{s/d}, & \mbox{ if } s\geq d.
\end{array}
\right.
$$
In particular, set $\phi^0(T)=1$.
\end{de}

Fix a probability vector $(p_1,\ldots, p_m)$ and non-singular contractive linear transformations $T_1$, $\ldots$, $T_m$ from $\R^d$ to $\R^d$. For $\ba=(a_1,\ldots, a_m)\in \R^{md}$,
let $\mu^\ba$ denote the self-affine measure associated with the IFS $\{T_i+a_i\}_{i=1}^m$ and  $(p_1,\ldots, p_m)$. For $k\in \N$, we write for brevity
$\Sigma_k:=\{1,\ldots,m\}^k$. For $I=i_1\ldots i_k\in \Sigma_k$,  denote $T_I:=T_{i_1}\circ\ldots\circ T_{i_k}$, $p_{I}:=p_{i_1}\ldots p_{i_k}$.  For $q\geq 0$, define
\begin{equation}
\label{e-1.1} D(q)=\left\{
\begin{array}{ll}
(q-1) \inf\left\{s\geq 0:\; \sum_{k=1}^\infty \sum_{I\in \Sigma_k}\left(\phi^{s}(T_I)\right)^{1-q} p_I^q<\infty\right\}, &\mbox{ if } 0\leq q<1,\\
\mbox{}\quad  0, &\mbox{ if } q=1,\\
(q-1) \sup\left\{s\geq 0:\; \sum_{k=1}^\infty \sum_{I\in \Sigma_k}\left(\phi^{s}(T_I)\right)^{1-q} p_I^q<\infty\right\}, &\mbox{ if } q>1,
\end{array}
\right.
\end{equation}
and
\begin{equation}
\label{e-1.2} \tau(q)=\left\{
\begin{array}{ll}
(q-1)\min\left\{\frac{D(q)}{q-1}, d\right\},& \mbox{ if }q\neq 1, \\ 0, & \mbox{ if }q=1.
\end{array}
\right.
\end{equation}

We remark that  $D$ and $\tau$ are continuous and piecewise concave  over $(0,\infty)$. More precisely, $D$ and $\tau$ are concave on $(1,\infty)$,   they are also concave on the subintervals $J_k$ of $(0,1)$, $k=0,1,\ldots, d$, where
$J_k=\{q\in (0,1): D(q)/(q-1)\in (k,k+1)\}$ for $k\leq d-1$ and $J_d=\{q\in (0,1): D(q)/(q-1)>d\}$ (see Appendix~\ref{S-8}). Hence  the one-sided derivatives of $D$ and $\tau$ exist for any $q>0$.

Now Falconer's result can be stated as follows.

\begin{thm}[\cite{Fal99}]
\label{thm-1.2} If $\|T_i\|<1/2$ for all $1\leq i\leq m$, then for $\L^{md}$-a.e. $\ba\in \R^{md}$,  the $L^q$-spectrum of $\mu^\ba$ is
$$
\tau(\mu^\ba,q)=\tau(q), \qquad 1<q\leq 2.
$$
\end{thm}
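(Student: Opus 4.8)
The plan is to fix a large closed cube $\Gamma\subset\R^{md}$, prove the identity for $\L^{md}$-a.e.\ $\ba\in\Gamma$, and then exhaust $\R^{md}$ by such cubes. On $\Gamma$ all the attractors $F^\ba$ lie inside one fixed ball, so each cylinder $E^\ba_I:=S^\ba_I(F^\ba)$ has diameter $\asymp\|T_I\|$ uniformly in $\ba$. I would first replace the supremum over disjoint balls in the definition of $\tau(\mu^\ba,q)$ by the mesh sum $\Theta_r(\ba):=\sum_Q\mu^\ba(Q)^q$ over the cubes $Q$ of the $r$-grid meeting $\mathrm{supp}\,\mu^\ba$: for $q>1$ the two quantities are comparable up to a multiplicative constant depending only on $d$ and $q$, hence define the same $\tau$. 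Writing $s_0:=D(q)/(q-1)$, so that $\tau(q)=(q-1)\min\{s_0,d\}$, the statement becomes $\liminf_{r\to0}\log\Theta_r(\ba)/\log r=\tau(q)$ a.e., which I split into the inequalities $\le\tau(q)$ and $\ge\tau(q)$.

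For the inequality $\tau(\mu^\ba,q)\le\tau(q)$ it suffices to bound $\Theta_r(\ba)$ from below by $c\,r^{\tau(q)}$ along some sequence $r\to0$. When $s_0\ge d$ this is immediate: since $\#\{Q:\mu^\ba(Q)>0\}\lesssim r^{-d}$, convexity of $t\mapsto t^q$ gives $\Theta_r(\ba)\ge\big(\#\{Q:\mu^\ba(Q)>0\}\big)^{1-q}\gtrsim r^{d(q-1)}=r^{\tau(q)}$ for every $\ba\in\Gamma$. When $s_0<d$ one argues with the natural covering instead, selecting cylinders $E^\ba_I$ whose ellipsoidal shape deposits mass $\gtrsim r^{\tau(q)}$ on appropriate mesh cubes and controlling their overlaps; this is more routine and parallel to the self-similar (and self-affine sponge) cases, so I would not dwell on it.

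The substantial half is $\tau(\mu^\ba,q)\ge\tau(q)$, i.e.\ an upper bound $\Theta_r(\ba)\lesssim r^{\tau(q)-\varepsilon}$. For each $r$ let $\Lambda_r:=\{I:\|T_I\|<r\le\|T_{I^-}\|\}$ (with $I^-$ the word $I$ with its last symbol removed) be the stopping section, so that the cylinders $E^\ba_I$, $I\in\Lambda_r$, have diameter $\asymp r$ and cover $F^\ba$, and the section decomposition $\mu^\ba=\sum_{I\in\Lambda_r}p_I\,\mu^\ba\circ(S^\ba_I)^{-1}$ yields $\mu^\ba(Q)\le\sum_{I\in\Lambda_r:\,E^\ba_I\cap Q\neq\emptyset}p_I$. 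Using $\phi^s(T_I)\le\|T_I\|^s<r^s$ for $I\in\Lambda_r$ and the convergence of $\sum_k\sum_{I\in\Sigma_k}\phi^s(T_I)^{1-q}p_I^q$ for every $s<\min\{s_0,d\}$, one gets $\sum_{I\in\Lambda_r}p_I^q\le C(s)\,r^{s(q-1)}$. Now apply, for $1<q\le2$, the elementary inequality
$$\Big(\sum_I a_I\Big)^q\ \le\ \sum_I a_I^q\ +\ \Big(\sum_{I\neq I'}a_Ia_{I'}\Big)^{q/2}\qquad(a_I\ge0),$$
valid since $t\mapsto t^{q/2}$ is subadditive and $\|a\|_2\le\|a\|_q$ for $q\le2$, with $a_I=p_I$ if $E^\ba_I\cap Q\neq\emptyset$ and $a_I=0$ otherwise, and sum over $Q$. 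Since each $I\in\Lambda_r$ meets only $O(1)$ mesh cubes, this gives
$$\Theta_r(\ba)\ \lesssim\ \sum_{I\in\Lambda_r}p_I^q\ +\ \sum_Q\Big(\sum_{\substack{I\neq I'\in\Lambda_r\\ E^\ba_I\cap Q\neq\emptyset,\ E^\ba_{I'}\cap Q\neq\emptyset}}p_Ip_{I'}\Big)^{q/2}.$$
The first (``diagonal'') term is $\lesssim r^{s(q-1)}$ for every $s<\min\{s_0,d\}$, hence $\lesssim r^{\tau(q)-\varepsilon}$, which is exactly the bound wanted; everything then reduces to the second, ``off-diagonal'', term.

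To control the off-diagonal term I would take $r=2^{-n}$ and integrate over $\ba\in\Gamma$. As $q/2\le1$, the term is at most $\sum_{I\neq I'\in\Lambda_r}(p_Ip_{I'})^{q/2}$ over pairs with $\operatorname{dist}(E^\ba_I,E^\ba_{I'})\le Cr$, so after integration each summand contributes $\L^{md}\{\ba\in\Gamma:\operatorname{dist}(E^\ba_I,E^\ba_{I'})\le Cr\}$, which the transversality method of \cite{Sol98} (operative precisely because $\|T_i\|<1/2$) bounds by a quantity governed by $r$ and the singular values of $T_{I\wedge I'}$. Organizing the double sum according to the longest common prefix $K=I\wedge I'$ and the first two differing symbols, and comparing the resulting series with the critical series $\sum_k\sum_{I\in\Sigma_k}\phi^s(T_I)^{1-q}p_I^q$ by means of the submultiplicativity $\phi^s(T_{IJ})\le\phi^s(T_I)\phi^s(T_J)$, one should obtain $\int_\Gamma(\text{off-diagonal}_n)\,d\ba\lesssim 2^{-n(\tau(q)-\varepsilon)}$ with a margin summable in $n$; Borel--Cantelli then gives, for a.e.\ $\ba$, $(\text{off-diagonal}_n)\lesssim 2^{-n(\tau(q)-\varepsilon)}$ for all large $n$, and interpolating between consecutive scales extends this to all small $r$. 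Together with the diagonal bound this proves $\tau(\mu^\ba,q)\ge\tau(q)$, completing the argument. I expect the off-diagonal estimate to be the main obstacle: one must weigh the extra combinatorial mass carried by \emph{pairs} of nearby cylinders (substantial because $q/2<1$ inflates each weight $p_I^{q/2}$) against the decay furnished by transversality, uniformly over the anisotropy of the $T_I$, so that the off-diagonal term too is $\lesssim r^{\tau(q)-\varepsilon}$ and the verdict $\tau(\mu^\ba,q)\ge\tau(q)$ is preserved.
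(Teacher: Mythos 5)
Your overall architecture (mesh sums, stopping family $\Lambda_r$, transversality via Falconer--Solomyak, Borel--Cantelli) is reasonable, but the off-diagonal step contains a genuine gap that kills the argument in exactly the regime the theorem must cover. After writing $\Theta_r\lesssim\sum_{I\in\Lambda_r}p_I^q+\sum_Q\big(\sum_{I\neq I'}p_Ip_{I'}\big)^{q/2}$ you bound the second term, using subadditivity of $t\mapsto t^{q/2}$, by $\sum_{I\neq I':\,\mathrm{dist}(E^\ba_I,E^\ba_{I'})\le Cr}(p_Ip_{I'})^{q/2}$ and then estimate its expectation by pairwise transversality. This linearization is far too lossy whenever exponentially many cylinders of $\Lambda_r$ typically meet one $r$-cube, which happens under the sole hypothesis $\|T_i\|<1/2$: take $d=1$, $m=3$, all $T_i=t\in(1/3,1/2)$, equal weights, $1<q<2$, so that $D(q)/(q-1)>1$ and $\tau(q)=q-1$. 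There $\#\Lambda_r\approx 3^n$ with $r=t^n$, a typical cube meets $\approx(3t)^n$ cylinders, and the expected value of your pair sum is of order $(3t)^{n(2-q)}r^{\,q-1}$ (the dominant contribution coming from pairs with short common prefix), i.e. exponentially larger than the required $r^{\tau(q)-\varepsilon}$; since both the pair count and the transversality estimate $\L^{md}\{\mathrm{dist}\le Cr\}\asymp r^s\phi^s(T_{I\wedge I'})^{-1}$ ($s<d$) are essentially sharp here, no choice of $s$ and no Markov/Borel--Cantelli argument can recover the bound. The loss is intrinsic to moving the exponent $q/2<1$ inside the sum over pairs: per cube, $\big(\sum p_Ip_{I'}\big)^{q/2}$ is smaller than $\sum(p_Ip_{I'})^{q/2}$ by a factor that is exponentially large in $n$ when multiplicities are exponential. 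Your heuristic check works only in the non-overlapping-type regime (e.g. equal weights with $mt<1$), which is not what the theorem assumes.

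This is precisely why Falconer's proof (the paper quotes Theorem \ref{thm-1.2} from \cite{Fal99} rather than reproving it; its own Lemma \ref{extension} for $q>2$ is the same scheme) avoids any pair decomposition: one estimates $\int\mu^\ba(B(z,r))^{q-1}\,d\mu^\ba(z)$, applies Fubini in $\ba$, and because $0<q-1\le1$ uses Jensen's inequality to move the $\ba$-integral inside the concave power $(\cdot)^{q-1}$; then the kernel estimate $\int_{B({\bf 0},\rho)}|\pi^\ba x-\pi^\ba y|^{-s}\,d\ba\le C\,\phi^{s}(T_{x\wedge y})^{-1}$ and subadditivity of $t^{q-1}$ reduce everything to the convergent series $\sum_k\sum_{I\in\Sigma_k}\phi^s(T_I)^{1-q}p_I^q$, uniformly over how strongly cylinders pile up. (For $q>2$ the paper replaces Jensen by Minkowski plus H\"older, as in Lemma \ref{extension}.) A secondary, lesser issue: you prove the direction $\tau(\mu^\ba,q)\le\tau(q)$ only when $D(q)/(q-1)\ge d$ and wave at the other case; the full statement (the paper's Lemma \ref{lem-5.1}, valid for every $\ba$) requires the cut-set/mass argument with the weights $p_I$, not just a counting bound, though this part is indeed routine compared with the off-diagonal problem above.
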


In \cite{Fal99}, Falconer raised some open problems, for instance, how to extend the above formula outside the range $1<q\leq 2$ and how to analyze  the multifractal structure
of $\mu^\ba$ for $\L^{md}$-a.e. $\ba\in \R^{md}$. The main purpose of this paper is to study these problems. 

Our main result  is the following. It will be completed with some results  for $q\ge 2$ in section~\ref{S-6} (see Theorems \ref{corextension}-\ref{thm-6.4}).

\begin{thm}
\label{thm-1.3} Assume that $\|T_i\|<1/2$ for all $1\leq i\leq m$. Let $q\in (0,2)$, $q\neq 1$.
\begin{itemize}
\item[(i)] Let $\alpha\in \{D'(q-), D'(q+)\}$, where $D'(q\pm)$ denote the one-sided derivatives of $D$ at $q$.  Assume that $0<q<1$, $D(q)/(q-1)<1$ and $\alpha q-D(q)\leq 1$. Then for  $\L^{md}$-a.e. $\ba\in \R^{md}$, $\tau(\mu^\ba,q)=\tau(q)=D(q)$, and furthermore, $E(\mu^\ba,\alpha)\neq \emptyset$ and
$$\dim_H E(\mu^\ba,\alpha)=\alpha q-\tau(q).$$

\item[(ii)] Let $q\in (1,2)$.  Assume that  $T_i$ ($i=1,\ldots,m$) are of  the form   $$T_i=\mbox{diag}(t_{i,1}, t_{i,2},\ldots, t_{i,d})$$
 with $\frac{1}{2}>t_{i,1}>t_{i,2}>\ldots>t_{i,d}>0.$
   Assume  furthermore that
$D(q)/(q-1)\in (k, k+1)$ for some integer $0\leq k\leq d-1$ (in this case $\alpha:=D'(q)$ exists) and  $\alpha q-D(q)\in (k, k+1)$.  \begin{itemize}
 \item If $k=0$,  then
 for  $\L^{md}$-a.e. $\ba\in \R^{md}$,
$E(\mu^\ba,\alpha)\neq \emptyset$ and
$$\dim_H E(\mu^\ba,\alpha)=\alpha q-\tau(q).$$
\item If $k>0$, then
 for  $\L^{md}$-a.e. $\ba\in \R^{md}$,
$\underline{E}(\mu^\ba,\alpha)\neq \emptyset$ and
$$\dim_H \underline{E}(\mu^\ba,\alpha)=\alpha q-\tau(q),$$ where
$\underline{E}(\mu^\ba,\alpha):=\left\{x\in \R^d:\; \liminf_{r\to 0}\frac{\log \mu(B_r(x))}{\log r}=\alpha\right\}$.
\end{itemize}
\end{itemize}
\end{thm}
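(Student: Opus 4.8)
The overall strategy is to combine two ingredients: a \emph{lower bound} for the $L^q$-spectrum valid for every $\ba$ (coming from Jensen's inequality / subadditivity applied to the natural cylinder-based approximation of $\mu^\ba$), and an \emph{almost-sure upper bound} obtained by integrating over the translation parameters $\ba$ and exploiting the transversality estimates available when $\|T_i\|<1/2$ (this is exactly the mechanism behind Falconer's Theorem~\ref{thm-1.2}, which we would first extend from $1<q\le 2$ to the range $q\in(0,2)$, $q\ne1$, under the stated hypotheses $D(q)/(q-1)<1$ in case (i) and the diagonal structure in case (ii)). Once $\tau(\mu^\ba,q)=\tau(q)$ is known for the relevant $q$, the equality $\tau(\mu^\ba,q)=D(q)$ follows from \eqref{e-1.2} together with the standing assumption $D(q)/(q-1)<1$ (resp.\ $<k+1\le d$), so that the $\min$ with $d$ is inactive. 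I would organize the write-up so that Part~(i) and the $k=0$ sub-case of Part~(ii) are handled uniformly, the $k>0$ sub-case requiring the passage from $E$ to $\underline E$.

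For the \emph{dimension of the level set}, the plan is the standard ``build an auxiliary measure supported on a large subset of $E(\mu^\ba,\alpha)$'' argument, but carried out on the symbolic space with the self-affine (non-conformal) geometry controlled via the singular value function. Concretely: since $\alpha=D'(q\pm)$ is a one-sided derivative of the Legendre-type function $D$ at the parameter $q$, there is an associated equilibrium/Gibbs-type state $\nu=\nu_q$ on the symbolic space $\Sigma=\{1,\dots,m\}^{\N}$ for the potential $\log(\phi^{s}(T_I)^{1-q}p_I^q)$ at the critical exponent $s=D(q)/(q-1)$; pushing $\nu$ to $\R^d$ via the natural projection gives a measure $\eta=\eta^\ba$ whose typical points have local dimension $\alpha$ with respect to $\mu^\ba$ and for which $\dim_H\eta^\ba=\alpha q-\tau(q)$. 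The two things that must be checked are (a) that for $\L^{md}$-a.e.\ $\ba$ the projection does not collapse dimension — again a transversality/Fourier-energy computation in the $\ba$ variable, feasible precisely because $\|T_i\|<1/2$ and, in case (ii), because the diagonal maps with distinct ratios $t_{i,1}>\dots>t_{i,d}$ make the singular directions deterministic and let one compare balls $B_r(x)$ with symbolic cylinders $S_I(F)$ of the right ``approximate square'' shape; and (b) that $\eta^\ba$-almost every $x$ actually lies in $E(\mu^\ba,\alpha)$, i.e.\ that the \emph{limit} (not merely the liminf) of $\log\mu^\ba(B_r(x))/\log r$ equals $\alpha$. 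Condition (b) is where the case distinction $k=0$ vs.\ $k>0$ enters: when $k=0$ the relevant cylinders are comparable to genuine balls uniformly along the scale, so the limit exists, whereas for $k\ge1$ the nested family of affine cylinders through $x$ has eccentricities growing along a sparse sequence of scales, so one only controls the liminf, forcing the conclusion to be stated for $\underline E(\mu^\ba,\alpha)$. Non-emptiness of $E(\mu^\ba,\alpha)$ (resp.\ $\underline E$) is immediate once the supporting measure is produced, since a positive-dimensional set is non-empty; the hypothesis $\alpha q-D(q)\le1$ (resp.\ $\in(k,k+1)$) guarantees that the computed dimension $\alpha q-\tau(q)$ is nonnegative and, in case (ii), that it lies in the correct linear piece of $\phi^s$ so that the symbolic dimension formula is the geometric one.

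The main obstacle, and the step I expect to consume most of the work, is establishing the almost-sure \emph{upper} bound $\tau(\mu^\ba,q)\le\tau(q)$ for $q\in(0,1)$ in Part~(i): for $q<1$ the quantity $\sum_j\mu(B_r(x_j))^q$ is a \emph{concave} functional of the measure, so the easy direction of the inequality is reversed compared with Falconer's $1<q\le2$ case, and one cannot simply use an $L^2$ (second-moment) transversality estimate — instead one needs a covering argument that produces, for Lebesgue-a.e.\ $\ba$, \emph{enough separation} among the cylinders $S_I(F)$ of a given generation so that at scale $r$ the ball count from below matches $\sum_I\phi^{s}(T_I)^{1-q}p_I^q$; this requires a careful transversality lemma controlling $\L^{md}\{\ba:\ |S_I(0)-S_J(0)|\le r\}$ for pairs $I,J$ of comparable generation, summed against the $q$-th powers, and is the technical heart of the paper. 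For Part~(ii) with $q>1$ the analogous upper bound is closer in spirit to Falconer's original argument (second moments are available), but the extra work is geometric: one must show that for the diagonal IFS the approximate squares at scale $r$ through a $\mu^\ba$-typical point have the prescribed mass $\approx r^{D(q)/(q-1)\cdot(\text{something})}$ and relate this to $\phi^s(T_I)$ via the explicit product $\prod t_{i_\ell,j}$, and then feed this into the standard Billingsley-type upper bound for $\dim_H\underline E(\mu^\ba,\alpha)$ using the already-established value of $\tau(\mu^\ba,q)=\tau(q)$ together with the general inequality $\dim_H\underline E(\mu,\alpha)\le\alpha q-\tau(\mu,q)$ for $q>0$.
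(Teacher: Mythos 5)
Your skeleton (equilibrium state at the critical exponent, push-forward under the coding map, Billingsley-type upper bound via Lemma \ref{lem-5.2}) is the right one, but the proposal has two genuine gaps. First, for part (i) you plan to \emph{begin} by extending Falconer's identity $\tau(\mu^\ba,q)=\tau(q)$ to $q\in(0,1)$ by a direct transversality/covering argument, and you yourself flag this as the unresolved ``technical heart.'' That step is both unproven in your write-up and, as stated, not available: the paper's remark after Example \ref{ex-1.4} shows $\tau(\mu^\ba,\cdot)$ cannot equal $\tau(\cdot)$ on all of $(0,1)$ in general (concavity obstruction), so no global extension of Theorem \ref{thm-1.2} exists, and no direct separation lemma for $q<1$ is ever established. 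The paper's route inverts your logic: Falconer's inequality $\tau(\mu^\ba,q)\ge\tau(q)$ (valid for \emph{all} $\ba$ when $q<1$, Lemma \ref{lem-5.1}) is combined with the lower bound $\dim_H\{z: d(\mu^\ba,z)=\alpha\}\ge \alpha q-\tau(q)$ coming from the projected ergodic equilibrium state (exact dimensionality via Jordan--Pollicott--Simon, Theorem \ref{thm-2.3}), and then Lemma \ref{lem-5.2} forces $\alpha q-\tau(\mu^\ba,q)\ge\alpha q-\tau(q)$; the identity $\tau(\mu^\ba,q)=\tau(q)$ is thus a \emph{consequence} of the multifractal lower bound, not a prerequisite. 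Your proposal leaves the hardest step of your own plan open when that step is in fact unnecessary.

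Second, for part (ii) with $k>0$ your explanation of why only $\underline E$ is reached (``eccentricities growing along a sparse sequence of scales'') does not identify the actual mechanism, and the ingredient that makes the argument work is absent from your proposal. To bound $\mu^\ba(B(\pi^\ba x,\,c\,\alpha_{k+1}(T_{x|n})))$ from below one pulls the ball back by $S_{x|n}^{-1}$, obtaining a box that is long in the last $d-k$ coordinates and thin in the first $k$; its $\mu^\ba$-mass is controlled by the mass that the coordinate projection $\mu\circ(\pi_k^\ba)^{-1}$ assigns to a small $k$-dimensional rectangle. This requires (a) that $\dim_{LY}\mu>k$ and $\dim_{LY}\eta>k$ so that both projected measures are absolutely continuous with respect to $\L^k$ (Theorem \ref{thm-2.3}(iii)), and (b) the equivalence of $\mu\circ(\pi_k^\ba)^{-1}$ to the restricted Lebesgue measure (Proposition \ref{pro-4.1}), whence $\eta\circ(\pi_k^\ba)^{-1}\ll\mu\circ(\pi_k^\ba)^{-1}$; this is exactly what Lemma \ref{lem-5.7} exploits, and it only yields the estimate along infinitely many $n$, which is why the conclusion concerns $\liminf$ and $\underline E$. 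Nothing in your sketch supplies a substitute for this absolute-continuity/equivalence input, so the $k>0$ case cannot be completed along the lines you describe.
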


We remark that the functions $\tau$ and $D$ can be determined explicitly in some special case.

 \bigskip

\begin{ex}
\label{ex-1.4} Assume that $T_1=T_2=\ldots=T_m={\rm diag} (t_1, t_2,\ldots, t_d)$ with
$$
\frac{1}{2}>t_1>t_2>\ldots >t_d.
$$
Denote $A(q):=\left(\sum_{i=1}^m p_i^q\right)^{1/(q-1)}$. Then by Definitions \eqref{e-1.1}-\eqref{e-1.2}, for $q>0$,
$$
\tau(q)=\left\{
\begin{array}{l}
D(q)=\displaystyle\frac{\log \sum_{i=1}^m p_i^q}{\log t_1} \qquad \mbox { if } A(q)\geq t_1,\\ \mbox{}\\
D(q)=\displaystyle\frac{\log \sum_{i=1}^m p_i^q}{\log
t_{k+1}}+(q-1)\left(k-\frac{\log (t_1\ldots t_k)}{\log t_{k+1}}\right)\\
\qquad \qquad\qquad \mbox { if }  t_1\ldots t_{k+1}\leq A(q)< t_1\ldots t_k   \mbox{ for some }1\leq k\leq d-1,\\
\mbox{}\\

d(q-1)\qquad \mbox{ if } A(q)<t_1\ldots t_d.
\end{array}
\right.
$$

\end{ex}

\bigskip

\begin{rem} We remark that in Example \ref{ex-1.4}, $\tau'(q+)>\tau'(q-)$ at those points $q\in (0,1)$ such that
$A(q)=t_1\ldots t_k$ for some $k\in \{1,2,\ldots,d-1\}$. Indeed, if such   $q$ exists, a direct calculation shows that
$$
\tau'(q+)-\tau'(q-)=\left(\frac{\log \sum_{i=1}^m p_i^q}{q-1}-\Big(\log \sum_{i=1}^m p_i^q\Big)'\right)\cdot \left(\frac{1}{\log t_{k+1}}-\frac{1}{\log t_k}\right)>0,
$$
using the strict convexity of the function $x\mapsto \log \sum_{i=1}^m p_i^x$ on $(0,\infty)$ and $q<1$; therefore $\tau$ is not concave on any neighborhood of $q$.
In this case,  Falconer's formula $\tau(\mu^\ba,t)=\tau(t)$ in Theorem \ref{thm-1.2} can not be extended to all $t\in (0,1)$,  because  $\tau(\mu^\ba,t)$ should be concave over $\R$. A right formula for $\tau(\mu^\ba,t)$ is expected. In Example \ref{e-6.C}, we  provide such a formula for certain  non-overlapping planar IFS.
\end{rem}

The paper is organized  as follows. In section \ref{S-2},  we present some definitions and known results about the sub-additive thermodynamic formalism; we also present
some known dimensional results about the projections of ergodic measures on typical self-affine sets. In section \ref{S-3}, we give a formula for the derivative of $D(q)$ using
the sub-additive thermodynamic formalism.   In section \ref{S-4}, we show that for a class of self-affine IFS on $\R^d$, any associated self-affine measure is either singular or equivalent to  the restricted $d$-dimensional Lebesgue measure
on the attractor.   In section \ref{S-5} we prove Theorem \ref{thm-1.3} and
related results. In section \ref{S-6}, we prove an extension of Falconer's formula for the $L^q$-spectrum and give some complement to Theorem~\ref{thm-1.3}. In section~\ref{S-7} we give further extensions of our results. In Appendix~\ref{S-8} we provide a proof of the concavity of the functions $\tau$ and $D$ over $(1,\infty)$, as well as a proof of their concavity over the subintervals intervals of $(0,1)$ over which $D(q)/(q-1)$ lies between two consecutive integers of $[0,d]$.

\section{Preliminaries}
\label{S-2}

\subsection{The sub-additive thermodynamic formalism}
\label{S-2.1}
In this subsection, we present some definitions and known results about the sub-additive thermodynamic formalism on full shifts.

Let $m\geq 2$.  Let $(\Sigma, \sigma)$ denote the one-sided full shift space over the alphabet $\{1,\ldots, m\}$ (cf. \cite{Bow75}). Let $\M(\Sigma,
\sigma)$ denote the collection of $\sigma$-invariant Borel probability measures on $\Sigma$ endowed with the weak star topology. For $\eta\in \M(\Sigma, \sigma)$, let
$h_\eta(\sigma)$ denote the measure-theoretic entropy of $\eta$ with respect to $\sigma$ (cf. \cite{Bow75}).

A sequence $\Psi=\{\psi_n\}_{n=1}^\infty$ of continuous functions on $\Sigma$ is said to be a {\it sub-additive potential} if
$$
\psi_{n+m}(x)\leq \psi_n(x)+\psi_{m}(\sigma^nx), \qquad \forall\; x\in \Sigma, \; m,n\in \N.
$$
More generally, $\Psi=\{\psi_n\}_{n=1}^\infty$ is said to be an {\it asymptotically sub-additive potential} if for any $\epsilon>0$, there exists a sub-additive potential
$\Phi=\{\phi_n\}_{n=1}^\infty$ on $\Sigma$ such that
$$
\limsup_{n\to \infty} \frac{1}{n}\sup_{x\in \Sigma}|\psi_n(x)-\phi_n(x)|\leq \epsilon.
$$

Now let $\Psi=\{\psi_n\}_{n=1}^\infty$ be an asymptotically sub-additive potential on $\Sigma$. The {\it topological pressure  $P(\sigma, \Psi)$ of $\Psi$} is defined as
$$ P(\sigma, \Psi):=\limsup_{n\to \infty} \frac{1}{n}\log \sum_{I\in \Sigma_n}\sup_{x\in [I]} \exp(\psi_n(x)),
$$
where $\Sigma_n:=\{1,\ldots, m\}^n$ and $[I]=\{x=(x_i)_{i=1}^\infty\in \Sigma: x_1\ldots x_n=I\}$ for $I\in \Sigma_n$.  For $\eta\in \M(\Sigma, \sigma)$, set
$$
\Psi_*(\eta)=\lim_{n\to \infty}\frac{1}{n} \int \psi_n(x)\; d\eta(x).
$$

The following variational principle was proved in \cite{CFH08, FeHu10} in a more general setting.

\begin{pro}
\label{pro-2.1} $P(\sigma, \Psi)=\sup\{ h_\eta(\sigma)+\Psi_*(\eta):\; \eta\in \M(\Sigma, \sigma)\}$.
\end{pro}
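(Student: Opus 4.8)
The plan is to follow the now-standard route for variational principles of (asymptotically) sub-additive potentials, as in \cite{CFH08, FeHu10}, proving the two inequalities separately. A preliminary reduction disposes of the asymptotic hypothesis: given $\varepsilon>0$, pick a genuinely sub-additive $\Phi=\{\phi_n\}$ with $\limsup_n\frac1n\sup_x|\psi_n(x)-\phi_n(x)|\le\varepsilon$; then directly from the definitions $|P(\sigma,\Psi)-P(\sigma,\Phi)|\le\varepsilon$ and $|\Psi_*(\eta)-\Phi_*(\eta)|\le\varepsilon$ for every $\eta\in\M(\Sigma,\sigma)$, so it suffices to establish the identity for sub-additive $\Psi$ and then let $\varepsilon\to0$. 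Throughout, write $\mathcal{P}$ for the (generating) partition of $\Sigma$ into $1$-cylinders and $\mathcal{P}_n=\bigvee_{i=0}^{n-1}\sigma^{-i}\mathcal{P}$ for the partition into $n$-cylinders, so that $h_\eta(\sigma)=h_\eta(\sigma,\mathcal{P})=\inf_n\frac1n H_\eta(\mathcal{P}_n)$.

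For the inequality $P(\sigma,\Psi)\ge\sup\{h_\eta(\sigma)+\Psi_*(\eta):\eta\in\M(\Sigma,\sigma)\}$, fix $\eta$ and $n\in\N$. Since $\psi_n$ is continuous, $\int_{[I]}\psi_n\,d\eta\le\eta([I])\sup_{[I]}\psi_n$, hence, using $\Psi_*(\eta)=\inf_n\frac1n\int\psi_n\,d\eta$, we get $h_\eta(\sigma)+\Psi_*(\eta)\le\frac1n\big(H_\eta(\mathcal{P}_n)+\sum_{I\in\Sigma_n}\eta([I])\sup_{[I]}\psi_n\big)=\frac1n\sum_{I\in\Sigma_n}\eta([I])\log\!\big(e^{\sup_{[I]}\psi_n}/\eta([I])\big)$. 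The elementary bound $\sum_i p_i\log(c_i/p_i)\le\log\sum_i c_i$ (concavity of $\log$, for a probability vector $(p_i)$ and $c_i>0$) turns the right-hand side into $\frac1n\log\sum_{I\in\Sigma_n}e^{\sup_{[I]}\psi_n}$; taking $\limsup_{n\to\infty}$ gives $h_\eta(\sigma)+\Psi_*(\eta)\le P(\sigma,\Psi)$, and then the supremum over $\eta$.

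For the reverse inequality one builds a good measure. For each $N$ choose $x_I\in[I]$ with $\psi_N(x_I)=\sup_{[I]}\psi_N$, put $Z_N=\sum_{I\in\Sigma_N}e^{\psi_N(x_I)}$ (so that $\frac1N\log Z_N$ has upper limit exactly $P(\sigma,\Psi)$), form the atomic measure $\nu_N=Z_N^{-1}\sum_I e^{\psi_N(x_I)}\delta_{x_I}$ and its orbit average $\mu_N=\frac1N\sum_{k=0}^{N-1}\sigma^k_*\nu_N$. Since the atoms lie in distinct elements of $\mathcal{P}_N$, a direct computation gives the exact identity $\log Z_N=H_{\nu_N}(\mathcal{P}_N)+\int\psi_N\,d\nu_N$. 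Choose a subsequence along which $\frac1N\log Z_N\to P(\sigma,\Psi)$ and, using compactness of $\M(\Sigma,\sigma)$, a further subsequence along which $\mu_N\to\eta$; note $\eta$ is $\sigma$-invariant. It then remains to prove, along this subsequence, $\limsup_N\frac1N\int\psi_N\,d\nu_N\le\Psi_*(\eta)$ and $\limsup_N\frac1N H_{\nu_N}(\mathcal{P}_N)\le h_\eta(\sigma)$; adding these to the identity above finishes the proof. Both estimates come from the same ``offset tiling'' device: writing $N=qn+r$ with $0\le r<n$ and decomposing the time interval $[0,N)$ into blocks of length $n$ starting at each offset $p\in\{0,\dots,n-1\}$ together with short end pieces, sub-additivity of $\{\psi_n\}$ and of entropy yields, after averaging over $p$, $\psi_N\le\frac1n\sum_{k=0}^{qn-1}\psi_n\circ\sigma^k+C_n$ and $H_{\nu_N}(\mathcal{P}_N)\le\frac1n\sum_{k=0}^{qn-1}H_{\sigma^k_*\nu_N}(\mathcal{P}_n)+C_n'$, with $C_n,C_n'$ depending only on $n$ (they collect the finitely many boundary and ``missing-time'' terms). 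Integrating the first against $\nu_N$ and dividing by $N$ gives $\frac1N\int\psi_N\,d\nu_N\le\frac1n\int\psi_n\,d\mu_N+o_N(1)$, which tends to $\frac1n\int\psi_n\,d\eta$ by weak-$*$ convergence and continuity of $\psi_n$, hence to $\Psi_*(\eta)$ as $n\to\infty$. For the entropy, concavity of $\nu\mapsto H_\nu(\mathcal{P}_n)$ converts $\frac1{qn}\sum_{k=0}^{qn-1}H_{\sigma^k_*\nu_N}(\mathcal{P}_n)$ into $H_{\bar\mu_N}(\mathcal{P}_n)$ with $\bar\mu_N=\frac1{qn}\sum_{k=0}^{qn-1}\sigma^k_*\nu_N$ at weak-$*$ distance $O(n/N)$ from $\mu_N$, so $\frac1N H_{\nu_N}(\mathcal{P}_N)\le\frac1n H_{\bar\mu_N}(\mathcal{P}_n)+o_N(1)$; since $\mathcal{P}_n$ is a finite clopen partition, $\nu\mapsto H_\nu(\mathcal{P}_n)$ is weak-$*$ continuous, so the right-hand side tends to $\frac1n H_\eta(\mathcal{P}_n)$, hence to $h_\eta(\sigma)$ as $n\to\infty$.

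The main obstacle is this second inequality: one must organize the decompositions so that all boundary contributions are absorbed into the constant $C_n$ (harmless precisely because $n$ is frozen while $N\to\infty$, and only afterwards $n\to\infty$), and one must pair concavity of the entropy with its continuity on $\M(\Sigma,\sigma)$ in order to transport the estimate from the discrete measures $\nu_N$ to the limit $\eta$. The reduction to sub-additive potentials is routine but essential, since the potentials $\{\log\phi^s(T_I)\}$ arising later are only asymptotically (indeed, almost) sub-additive.
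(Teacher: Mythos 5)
The paper does not prove Proposition \ref{pro-2.1} at all: it is quoted from \cite{CFH08, FeHu10}, where it is established for general topological dynamical systems using $(n,\epsilon)$-separated sets. So there is nothing in the paper to compare against line by line; what you have written is a self-contained proof, and it is correct. It is in substance the same Misiurewicz-type argument as in the cited references, specialized to the full shift, where the clopen cylinder partition $\mathcal{P}$ replaces separated sets; this specialization genuinely simplifies matters (exact identity $\log Z_N=H_{\nu_N}(\mathcal{P}_N)+\int\psi_N\,d\nu_N$, weak-$*$ continuity of $\nu\mapsto H_\nu(\mathcal{P}_n)$ because cylinders have empty boundary), and all the key steps are sound: the Jensen-type bound for the easy inequality, the offset-tiling estimates with constants depending only on $n$, concavity of entropy on a fixed finite partition, and invariance of the weak-$*$ limit $\eta$.

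Two small points deserve a sentence in a polished write-up. First, the reduction to sub-additive potentials tacitly uses that $\Psi_*(\eta)=\lim_n\frac1n\int\psi_n\,d\eta$ exists (in $[-\infty,\infty)$) for an asymptotically sub-additive $\Psi$; this follows from the same $\varepsilon$-comparison with a sub-additive $\Phi$ (for which Fekete applies under invariance), but it should be stated since the definition of $\Psi_*$ presupposes it. Second, when $\Psi_*(\eta)=-\infty$ the splitting of $\limsup$'s should be replaced by first bounding, for each fixed $n$, $P(\sigma,\Psi)\le \frac1n H_\eta(\mathcal{P}_n)+\frac1n\int\psi_n\,d\eta$ and then letting $n\to\infty$; both sequences converge (the first decreases to $h_\eta(\sigma)$, the second to $\Psi_*(\eta)$), so the conclusion is unaffected. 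Neither point is a gap in the argument, only in its bookkeeping.
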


We remark that the variational principle for sub-additive potentials has been studied in the literature under additional
assumptions on the corresponding sub-additive potentials (see e.g. \cite{Fal88b, Bar96, FeLa02, Kae04}).

Let $\I(\Psi)$ denote the collection of  $\eta\in \M(\Sigma, \sigma)$ such that
$$h_\eta(\sigma)+\Psi_*(\eta)=P(\sigma, \Psi).$$
Then $\I(\Psi)\neq \emptyset$ (see e.g., \cite[Theorem 3.3]{FeHu10}). Each element of $\I(\Psi)$ is called an {\it equilibrium state} for $\Psi$.

\begin{lem}[\cite{FeHu10}, Theorem 3.3(i)]
\label{lem-2.2}
 $\I(\Psi)$ is a non-empty compact convex subset of $\M(\Sigma, \sigma)$. Moreover, any extreme point of $\I(\Psi)$ is an ergodic measure on $\Sigma$.
\end{lem}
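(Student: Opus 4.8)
The plan is to prove Lemma~\ref{lem-2.2} by combining the variational principle (Proposition~\ref{pro-2.1}) with standard convexity and upper semicontinuity arguments, adapted from the classical (additive) thermodynamic formalism to the asymptotically sub-additive setting.

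\medskip

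\noindent\textbf{Step 1: Upper semicontinuity of $\eta\mapsto h_\eta(\sigma)+\Psi_*(\eta)$.} On the full shift $(\Sigma,\sigma)$ the entropy map $\eta\mapsto h_\eta(\sigma)$ is upper semicontinuous and affine. For the potential term, I would first treat a genuine sub-additive potential $\Phi=\{\phi_n\}$: by sub-additivity and the Kingman-type argument, $\Phi_*(\eta)=\inf_n \frac1n\int\phi_n\,d\eta$, an infimum of continuous affine functionals of $\eta$ (in the weak-$*$ topology), hence $\Phi_*$ is upper semicontinuous and affine. For an asymptotically sub-additive $\Psi$, approximate it uniformly (in the $\frac1n\sup|\psi_n-\phi_n|$ sense) by sub-additive $\Phi^{(\epsilon)}$; then $\Psi_*$ is a uniform limit of the $\Phi^{(\epsilon)}_*$, so $\Psi_*$ is affine, and it is both upper and lower semicontinuous, hence continuous — in any case upper semicontinuous suffices. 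Therefore $F(\eta):=h_\eta(\sigma)+\Psi_*(\eta)$ is upper semicontinuous and concave (in fact affine plus u.s.c. concave) on the compact metrizable space $\M(\Sigma,\sigma)$.

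\medskip

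\noindent\textbf{Step 2: $\I(\Psi)$ is nonempty, compact, convex.} Since $F$ is u.s.c. on the compact set $\M(\Sigma,\sigma)$, it attains its supremum, which equals $P(\sigma,\Psi)$ by Proposition~\ref{pro-2.1}; hence $\I(\Psi)=F^{-1}(\{P(\sigma,\Psi)\})=\{\eta: F(\eta)\ge P(\sigma,\Psi)\}$ is nonempty. The latter is the superlevel set of a u.s.c. function, hence closed, and closed subsets of the compact space $\M(\Sigma,\sigma)$ are compact. Convexity follows because $F$ is concave (entropy is affine, $\Psi_*$ is affine): if $\eta_1,\eta_2\in\I(\Psi)$ and $t\in[0,1]$, then $F(t\eta_1+(1-t)\eta_2)=tF(\eta_1)+(1-t)F(\eta_2)=P(\sigma,\Psi)$, so $t\eta_1+(1-t)\eta_2\in\I(\Psi)$.

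\medskip

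\noindent\textbf{Step 3: Extreme points of $\I(\Psi)$ are ergodic.} Let $\eta$ be an extreme point of $\I(\Psi)$ and suppose, for contradiction, that $\eta$ is not ergodic. Then there is a $\sigma$-invariant Borel set $A$ with $0<\eta(A)<1$, giving the decomposition $\eta=\eta(A)\,\eta_A+(1-\eta(A))\,\eta_{A^c}$ into two distinct $\sigma$-invariant probability measures $\eta_A,\eta_{A^c}$ (the normalized restrictions). Since $F$ is affine, $P(\sigma,\Psi)=F(\eta)=\eta(A)F(\eta_A)+(1-\eta(A))F(\eta_{A^c})$; as $F(\eta_A),F(\eta_{A^c})\le P(\sigma,\Psi)$, both must equal $P(\sigma,\Psi)$, so $\eta_A,\eta_{A^c}\in\I(\Psi)$. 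This exhibits $\eta$ as a nontrivial convex combination of two distinct elements of $\I(\Psi)$, contradicting extremality. (The finiteness of all entropies involved, guaranteed by $\log m<\infty$ on the full shift, is what makes the affine decomposition of $F$ legitimate.)

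\medskip

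\noindent\textbf{Main obstacle.} The only delicate point is Step~1, specifically justifying that $\Psi_*$ is affine and (upper semi)continuous when $\Psi$ is merely asymptotically sub-additive rather than sub-additive; once the approximation by sub-additive potentials is invoked and the classical Kingman-infimum representation is used for each approximant, the rest is a routine transcription of the Walters-style argument (u.s.c. + concave $\Rightarrow$ compact convex set of maximizers; affine $\Rightarrow$ extreme points are ergodic). All the substantive input is packaged in Proposition~\ref{pro-2.1} and the upper semicontinuity of entropy on a full shift.
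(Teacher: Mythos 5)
The paper gives no proof of this lemma at all---it is imported verbatim from \cite{FeHu10}, Theorem 3.3(i)---and your argument is precisely the standard one underlying that citation: $\Psi_*$ is affine and upper semicontinuous (the latter is the paper's own Lemma~\ref{lem-2.3}), so $\eta\mapsto h_\eta(\sigma)+\Psi_*(\eta)$ is an affine u.s.c.\ functional on the compact convex set $\M(\Sigma,\sigma)$; its maximizer set is nonempty, compact and convex, and affineness plus the ergodic decomposition of a non-ergodic extreme point yields the contradiction. Your proof is correct; the only blemishes are cosmetic: the parenthetical claim that $\Psi_*$ is continuous is unjustified (a uniform limit of u.s.c.\ functions is u.s.c., not l.s.c.), though as you say u.s.c.\ suffices, and in Step~1 the ``infimum of affine functionals'' gives upper semicontinuity and concavity only---affineness of $\Phi_*$ should be read off from the pointwise-limit representation $\Phi_*(\eta)=\lim_n\frac1n\int\phi_n\,d\eta$, with the usual care when the value $-\infty$ occurs.
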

We end this subsection by mentioning the following property of $\Psi_*$; for a proof, see \cite[Proposition A.1(2)]{FeHu10}.
\begin{lem}
\label{lem-2.3}
The map $\Psi_*: \M(\Sigma,\sigma)\to \R\cup\{-\infty\}$ is upper semi-continuous.
\end{lem}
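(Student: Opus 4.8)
The plan is to reduce first to the case of a genuine sub-additive potential, where $\Psi_*$ admits a clean representation as an infimum of weak-star continuous functions (and is therefore automatically upper semi-continuous), and then to pass to the asymptotically sub-additive case by a \emph{uniform} approximation argument that transfers upper semi-continuity from the approximants to the limit.

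First I would handle a sub-additive potential $\Phi=\{\phi_n\}$. Fix $\eta\in\M(\Sigma,\sigma)$ and set $a_n=\int\phi_n\,d\eta$. The sub-additivity $\phi_{n+m}\le\phi_n+\phi_m\circ\sigma^n$, combined with the $\sigma$-invariance of $\eta$ (which yields $\int\phi_m\circ\sigma^n\,d\eta=\int\phi_m\,d\eta$), gives $a_{n+m}\le a_n+a_m$. Each $a_n$ is finite since $\phi_n$ is continuous on the compact space $\Sigma$, so by Fekete's subadditive lemma $\tfrac{a_n}{n}$ converges in $\R\cup\{-\infty\}$ to its infimum, i.e.
$$
\Phi_*(\eta)=\inf_{n\ge 1}\frac{1}{n}\int\phi_n\,d\eta .
$$
Now each map $\eta\mapsto\frac1n\int\phi_n\,d\eta$ is weak-star continuous because $\phi_n$ is continuous, so $\Phi_*$, being a pointwise infimum of continuous functions, is upper semi-continuous with values in $\R\cup\{-\infty\}$.

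Next I would pass to a general asymptotically sub-additive $\Psi=\{\psi_n\}$. For each $k\in\N$ choose a sub-additive potential $\Phi^{(k)}=\{\phi_n^{(k)}\}$ with $\limsup_{n\to\infty}\frac1n\sup_{x}|\psi_n(x)-\phi_n^{(k)}(x)|\le 1/k$. The pointwise bound
$$
\left|\frac1n\int\psi_n\,d\eta-\frac1n\int\phi_n^{(k)}\,d\eta\right|\le\frac1n\sup_{x\in\Sigma}\bigl|\psi_n(x)-\phi_n^{(k)}(x)\bigr|
$$
has a right-hand side independent of $\eta$, so letting $n\to\infty$ yields a bound uniform in $\eta$: the $\limsup_n$ of the left-hand side is at most $1/k$ for every $\eta$. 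This forces $\limsup_n\frac1n\int\psi_n\,d\eta-\liminf_n\frac1n\int\psi_n\,d\eta\le 2/k$ for all $k$, hence the defining limit $\Psi_*(\eta)$ exists in $\R\cup\{-\infty\}$, and moreover $\Psi_*(\eta)\le\Phi^{(k)}_*(\eta)+1/k$ and $\Phi^{(k)}_*(\eta)\le\Psi_*(\eta)+1/k$ (with the convention that a finite quantity bounded above by $-\infty+1/k$ is itself $-\infty$).

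Finally I would conclude upper semi-continuity directly. Let $\eta_j\to\eta_0$ weak star. When $\Psi_*(\eta_0)$ is finite, using $\Psi_*(\eta_j)\le\Phi^{(k)}_*(\eta_j)+1/k$ and the upper semi-continuity of $\Phi^{(k)}_*$ established above,
$$
\limsup_{j}\Psi_*(\eta_j)\le\Phi^{(k)}_*(\eta_0)+1/k\le\Psi_*(\eta_0)+2/k ,
$$
and letting $k\to\infty$ gives $\limsup_j\Psi_*(\eta_j)\le\Psi_*(\eta_0)$; when $\Psi_*(\eta_0)=-\infty$, fixing any single $k$ already forces $\Phi^{(k)}_*(\eta_0)=-\infty$, so $\limsup_j\Phi^{(k)}_*(\eta_j)=-\infty$ and thus $\limsup_j\Psi_*(\eta_j)=-\infty=\Psi_*(\eta_0)$. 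The only genuinely delicate points I expect are the correct use of $\sigma$-invariance to obtain exact sub-additivity of $n\mapsto\int\phi_n\,d\eta$, and carrying the possibly infinite value $-\infty$ through the uniform-approximation step; it is precisely the uniformity in $\eta$ of the approximation that lets the limit function $\Psi_*$ inherit upper semi-continuity from the approximants $\Phi^{(k)}_*$.
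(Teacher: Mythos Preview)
Your argument is correct. The paper itself does not give a proof of Lemma~\ref{lem-2.3}; it simply refers to \cite[Proposition~A.1(2)]{FeHu10}. Your approach---first expressing $\Phi_*(\eta)=\inf_{n\ge 1}\frac1n\int\phi_n\,d\eta$ for a genuine sub-additive potential via Fekete's lemma and $\sigma$-invariance (hence upper semi-continuity as an infimum of weak-star continuous maps), and then transferring this to the asymptotically sub-additive case through the uniform-in-$\eta$ estimate $|\Psi_*(\eta)-\Phi^{(k)}_*(\eta)|\le 1/k$---is exactly the standard proof one finds in that reference, so there is nothing to compare.
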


\subsection{Projections of ergodic measures on typical self-affine sets}
In this subsection, we introduce a  result of Jordan, Pollicott and Simon \cite{JPS07}  for  self-affine IFS, which plays a key role in the proof of Theorem
\ref{thm-1.3}.

Let $m\geq 2$ and $T_1,\ldots, T_m$ be non-singular linear transformations  from $\R^d$ to $\R^d$.

For $\ba=(a_1,\ldots, a_m)\in \R^{md}$, let $\pi^\ba:\Sigma\to \R^d$ be the coding mapping associated with the IFS $\{T_i+a_i\}_{i=1}^m$, that is,
\begin{equation}
\label{e-2.21}
\pi^\ba(x)=\lim_{n\to \infty} S_{x_1}\circ S_{x_2}\circ\ldots \circ S_{x_n}({\bf 0}),
\end{equation}
where $S_i:=T_i+a_i$. It is not hard to see that $\pi^\ba(\Sigma)$ is just the attractor of the IFS $\{T_i+a_i\}_{i=1}^m$. For $s\geq 0$ and $\eta\in \M(\Sigma, \sigma)$,
set
\begin{equation}\label{e-2.1}
\phi^s_*(\eta)=\lim_{n\to \infty} \frac{1}{n}\int \log \phi^s(T_{x|n})\; d\eta(x),
\end{equation}
where $T_{x|n}:=T_{x_1}\ldots T_{x_n}$ for $x=(x_i)_{i=1}^n\in \Sigma$ and $\phi^s(\cdot)$ denotes the singular value function (see Definition \ref{de-1.1}). Since
$\phi^s$ is sub-multiplicative in the sense that $\phi^s(AB)\leq \phi^s(A)\phi^s(B)$ for any $d\times d$ real matrices $A,B$ (cf. \cite[Lemma 2.1]{Fal88}), the limit in
\eqref{e-2.1} exists. The following definition was introduced in \cite{JPS07} in a slightly different but equivalent form.

\begin{de}
\label{de-2.2} For an ergodic measure $\eta$ on $\Sigma$, the Lyapunov dimension of $\eta$ (associated with $T_1,\ldots, T_m$), denoted as $ \dim_{LY} \eta$,  is defined by
$\dim_{LY} \eta=s,$
 where $s$ is the unique non-negative value so that $h_\eta(\sigma)+\phi_*^s(\eta)=0$.
\end{de}

Let us give another definition.
\begin{de} Let $\xi$ be a Borel probability measure on $\R^d$.
\begin{itemize}
\item[(i)] The Hausdorff dimension of $\xi$ is defined as
$$
\dim_H\xi=\inf\{\dim_HF:\; F\subset \R^d \mbox{ is Borel with }\xi(\R^d\backslash F)=0\}.
$$
\item[(ii)] Say that $\xi$ is exactly dimensional if there is a constant $c\geq 0$ such that
\begin{equation*}
\lim_{r\to 0}\frac{\log \xi(B(z,r))}{\log r}=c\quad \mbox{ for $\xi$-a.e $z\in \R^d$}.
\end{equation*}
\end{itemize}
\end{de}
It is well known \cite{You82} that if $\xi$ is exactly dimensional, then $\dim_H\xi=c$.  Now we can state the following projection result of Jordan, Pollicott and Simon \cite{JPS07}.

\begin{thm}[\cite{JPS07}]
 \label{thm-2.3}
 Assume that $\|T_i\|<1/2$ for $1\leq i\leq m$. Let $\eta$ be an ergodic measure on $\Sigma$. Then for $\L^{md}$-a.e $\ba\in \R^{md}$,
\begin{itemize}
\item[(i)]  $\dim_H \eta\circ (\pi^{\ba})^{-1}=\min\{\dim_{LY}\eta, d\}$.
\item[(ii)] If $\dim_{LY}\eta\in [0,1]$, then $\eta\circ (\pi^{\ba})^{-1}$ is exactly dimensional.
 \item[(iii)] If $\dim_{LY}\eta>d$, then
    $\eta\circ (\pi^{\ba})^{-1}\ll \L^{d}$.

\end{itemize}

\end{thm}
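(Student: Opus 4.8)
The plan is to prove the upper bound $\dim_H\nu^\ba\le\min\{\dim_{LY}\eta,d\}$ (writing $\nu^\ba:=\eta\circ(\pi^\ba)^{-1}$) for \emph{every} $\ba\in\R^{md}$ by a covering argument, and to obtain the matching lower bound as well as (ii)--(iii) for $\L^{md}$-a.e.\ $\ba$ by the transversality/potential-theoretic method, in which $\|T_i\|<1/2$ is used. For the upper bound: for $I\in\Sigma_n$ the set $\pi^\ba([I])$ lies in a translate of the ellipsoid $T_I\bigl(B(\mathbf{0},R)\bigr)$ with $R:=\mathrm{diam}\,\pi^\ba(\Sigma)$, whose semi-axes are $R$ times the singular values $\alpha_1(T_I)\ge\cdots\ge\alpha_d(T_I)$; for any $s\in(0,d]$ it is covered by at most $C(s,d)\,\phi^s(T_I)\,\alpha_{\lceil s\rceil}(T_I)^{-s}$ balls of radius $\alpha_{\lceil s\rceil}(T_I)\le\|T_I\|\le 2^{-n}$. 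Fix $s>\dim_{LY}\eta$, so that $h_\eta(\sigma)+\phi^s_*(\eta)<0$ (the map $s\mapsto h_\eta(\sigma)+\phi^s_*(\eta)$ is continuous and strictly decreasing, and vanishes at $s=\dim_{LY}\eta$). By the Shannon--McMillan--Breiman and Kingman subadditive ergodic theorems (the latter applies since $\phi^s$ is submultiplicative), for $n\to\infty$ the cylinders $I\in\Sigma_n$ with $\eta([I])\ge e^{-n(h_\eta(\sigma)+\epsilon)}$ and $\phi^s(T_I)\le e^{n(\phi^s_*(\eta)+\epsilon)}$ have union of $\eta$-measure tending to $1$; there being at most $e^{n(h_\eta(\sigma)+\epsilon)}$ of them, the balls covering the corresponding sets $\pi^\ba([I])$ cover a set of $\nu^\ba$-measure tending to $1$ with total $s$-dimensional Hausdorff sum $\le e^{n(h_\eta(\sigma)+\phi^s_*(\eta)+2\epsilon)}\to0$. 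A standard exhaustion gives $\dim_H\nu^\ba\le s$; letting $s\downarrow\dim_{LY}\eta$, together with the trivial bound $\dim_H\nu^\ba\le d$, yields the upper bound.

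For the lower bound and (iii), fix a large cube $Q\subset\R^{md}$. If $x\ne y$ have common prefix $I=x\wedge y\in\Sigma_n$, then $\pi^\ba(x)-\pi^\ba(y)=T_I\bigl(\pi^\ba(\sigma^n x)-\pi^\ba(\sigma^n y)\bigr)$, where the two points on the right carry distinct first symbols; the transversality of the parametrised family $\{\pi^\ba\}$ --- valid under $\|T_i\|<1/2$ by Falconer's method \cite{Fal88} in Solomyak's sharpened form \cite{Sol98} --- together with the elementary estimate $\int_{|w|\le R}|Tw|^{-s}\,dw\le C(s,d)\,R^{d-s}\,\phi^s(T)^{-1}$ for $0\le s<d$, gives $\int_Q|\pi^\ba(x)-\pi^\ba(y)|^{-s}\,d\ba\le C_Q\,\phi^s(T_I)^{-1}$. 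Summing over common prefixes and applying Fubini,
$$
\int_Q\iint\frac{d\eta(x)\,d\eta(y)}{|\pi^\ba(x)-\pi^\ba(y)|^{s}}\,d\ba\ \le\ C_Q\sum_{n\ge0}\ \sum_{I\in\Sigma_n}\frac{\eta([I])^2}{\phi^s(T_I)}.
$$
This series may diverge; but on restricting $\eta$ to a set $G$ of $\eta$-measure close to $1$ on which the ergodic averages above converge uniformly (Egorov), the corresponding series over cylinders meeting $G$ is comparable to $\sum_n e^{-n(h_\eta(\sigma)+\phi^s_*(\eta))(1+o(1))}$, which converges for $s<\dim_{LY}\eta$. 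Hence $\nu^\ba|_G:=(\eta|_G)\circ(\pi^\ba)^{-1}$ has finite $s$-energy for $\L^{md}$-a.e.\ $\ba\in Q$, so $\dim_H\nu^\ba\ge\dim_H\nu^\ba|_G\ge s$; letting $s\uparrow\min\{\dim_{LY}\eta,d\}$ and exhausting $\R^{md}$ by cubes proves (i). When $\dim_{LY}\eta>d$ the same computation applies with $s=d$ and $\phi^d(T_I)=|\det T_I|$: replacing the kernel by $r^{-d}\mathbf{1}_{\{|\pi^\ba x-\pi^\ba y|\le r\}}$ and letting $r\to0$ shows $\int_Q\int\liminf_{r\to0}r^{-d}\nu^\ba(B(z,r))\,d\nu^\ba(z)\,d\ba<\infty$, whence $\nu^\ba\ll\L^d$ for $\L^{md}$-a.e.\ $\ba$, which is (iii).

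For (ii), suppose $s_0:=\dim_{LY}\eta\in[0,1]$. The finite-energy bound above already forces, for $\L^{md}$-a.e.\ $\ba$, the lower local dimension of $\nu^\ba$ to equal $s_0$ at $\nu^\ba$-a.e.\ point (it is $\ge s$ for every $s<s_0$, and $\le s_0$ by the upper bound). To obtain \emph{exact} dimensionality one needs the \emph{upper} local dimension to be $\le s_0$ at $\nu^\ba$-a.e.\ point, and here $s_0\le1$ is used: then $\phi^{s_0}(T_I)=\|T_I\|^{s_0}$, so each ellipsoid $\pi^\ba([I])$ is effectively seen at the single scale $\|T_I\|$; refining the cover of $B(z,r)$ (with $z=\pi^\ba(\omega)$ and $r\approx\|T_{\omega|n}\|$) by the cylinders realising that scale --- the thin directions contributing negligibly --- yields $\limsup_{r\to0}\frac{\log\nu^\ba(B(z,r))}{\log r}\le s_0$ for $\nu^\ba$-a.e.\ $z$, for $\L^{md}$-a.e.\ $\ba$. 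Hence $\nu^\ba$ is exactly dimensional with constant $s_0$ (which, via \cite{You82}, re-proves (i) in this range).

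The main obstacle is the transversality inequality $\int_Q|\pi^\ba(x)-\pi^\ba(y)|^{-s}\,d\ba\le C_Q\,\phi^s(T_I)^{-1}$: one must control the Lebesgue measure of the sublevel sets of $\ba\mapsto|\pi^\ba(x)-\pi^\ba(y)|$ \emph{and} reconcile this with the strongly anisotropic change of variables dictated by the singular value function --- and it is exactly here that $\|T_i\|<1/2$, rather than merely $<1$, is indispensable. Secondary points are the uniform ergodic control needed to tame the series $\sum_n\sum_{I\in\Sigma_n}\eta([I])^2\phi^s(T_I)^{-1}$ and, for (ii), the upper local dimension estimate, which genuinely relies on $s_0\le1$.
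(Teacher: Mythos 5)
First, a point of comparison: the paper does not prove Theorem \ref{thm-2.3} at all --- it is imported from Jordan--Pollicott--Simon \cite{JPS07} (with the remark that part (ii) is only implicit in \cite[Theorem 4.3]{JPS07}). So the only meaningful comparison is with the method of the cited source, and your strategy is essentially that one: Falconer's covering argument for the upper bound $\dim_H\eta\circ(\pi^\ba)^{-1}\le\min\{\dim_{LY}\eta,d\}$ valid for every $\ba$, and the transversality estimate $\int_{B({\bf 0},\rho)}|\pi^\ba x-\pi^\ba y|^{-s}\,d\ba\le C\,\phi^s(T_{x\wedge y})^{-1}$ (where $\|T_i\|<1/2$ enters) combined with SMB/Kingman and an Egorov restriction to handle a general ergodic $\eta$, for the a.e.\ lower bound and for (iii). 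Your outlines of (i) and (iii) are correct modulo routine care (taking $s$ non-integral in Falconer's lemma, and noting that absolute continuity of $(\eta|_G)\circ(\pi^\ba)^{-1}$ for a sequence of sets $G$ of measure $\to1$ passes to $\eta\circ(\pi^\ba)^{-1}$).

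There is, however, a genuine gap in your treatment of (ii). You claim that ``the finite-energy bound above already forces'' the lower local dimension of $\nu^\ba=\eta\circ(\pi^\ba)^{-1}$ to be $\ge s$ at $\nu^\ba$-a.e.\ point. But your energy bound concerns the \emph{restricted} measure $(\eta|_G)\circ(\pi^\ba)^{-1}$; finiteness of its $s$-energy gives $(\eta|_G)\circ(\pi^\ba)^{-1}(B(z,r))\le M(z)r^s$ at a.e.\ $z$, which says nothing about $\nu^\ba(B(z,r))$ from above, hence nothing about $\underline{d}(\nu^\ba,z)$ (it does give the Hausdorff-dimension lower bound in (i), but exact dimensionality needs the pointwise statement for the full measure). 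The standard repair is to estimate the pointwise potential: for $\eta$-typical $x$, $\int_{B({\bf 0},\rho)}\int_\Sigma|\pi^\ba x-\pi^\ba y|^{-s}\,d\eta(y)\,d\ba\le C\sum_{n\ge0}\eta([x|n])\,\phi^s(T_{x|n})^{-1}<\infty$ (no Egorov needed, only a.e.\ convergence in SMB/Kingman), and then Fubini in $(x,\ba)$ gives $\nu^\ba(B(\pi^\ba x,r))\le M(x,\ba)\,r^s$ for $\eta$-a.e.\ $x$, for a.e.\ $\ba$. Secondly, your argument for the upper local dimension runs in the wrong direction as written: ``refining the cover of $B(z,r)$ by cylinders'' can only bound $\nu^\ba(B(z,r))$ from \emph{above}, whereas $\overline{d}(\nu^\ba,z)\le s_0$ requires a \emph{lower} bound on $\nu^\ba(B(z,r))$. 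What actually works (and is where $s_0\le 1$ is used) is the containment $\pi^\ba([x|n])=S_{x|n}(\pi^\ba(\Sigma))\subset B(\pi^\ba x,2R\|T_{x|n}\|)$, exactly as in \eqref{e-5.3} in the proof of Proposition \ref{pro-5.1}: it gives $\nu^\ba(B(\pi^\ba x,2R\|T_{x|n}\|))\ge\eta([x|n])$, whence by SMB and Kingman $\overline{d}(\nu^\ba,\pi^\ba x)\le h_\eta(\sigma)/(-\lambda_1(\eta))$ for $\eta$-a.e.\ $x$ and every $\ba$; this equals $s_0$ precisely when $\dim_{LY}\eta\le1$, since then $\phi^{s_0}_*(\eta)=s_0\lambda_1(\eta)$. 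With these two repairs your sketch of (ii) closes, and it then coincides with the argument behind \cite[Theorem 4.3]{JPS07}.
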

We remark that Theorem \ref{thm-2.3}(ii) was only implicitly in \cite[Theorem 4.3]{JPS07}.   After we completed the first version of this paper, Thomas Jordan pointed to us that the assumption $\dim_{LY}\eta\in [0,1]$ in Theorem \ref{thm-2.3}(ii) can be removed, that is, for any ergodic measure $\eta$ on $\Sigma$, $\eta\circ (\pi^{\ba})^{-1}$ is exactly dimensional for $\L^{md}$-a.e $\ba\in \R^{md}$; the proof is done  by taking  a minor change in the proof of \cite[Theorem 4.3]{JPS07} for the upper bound \cite{Jor11}.  We remark that this result was proved earlier by  Falconer and Miao \cite{FaMi11}  in the special case that
$\eta$ is a Bernoulli product measure or a Gibbs measure.  However if $T_1,\ldots, T_m$ are commutative, then $\eta\circ (\pi^{\ba})^{-1}$ is exactly dimensional for any ergodic measure $\eta$ on $\Sigma$ and any $\ba\in \R^{md}$ (cf. \cite[Theorem 2.12]{FeHu09}).

\section{A formula for the derivative  of $D(q)$}
\label{S-3}

Assume that $T_1,\ldots, T_m$ are contractive non-singular linear mappings from $\R^d$ to $\R^d$, and let $(p_1,\ldots, p_m)$ be a probability vector. Let $D(q)$ be
defined as in \eqref{e-1.1}. It is not hard to see that for $q>0$, $q\neq 1$,  $D(q)$ is the unique value $s\in \R$ so that
\begin{equation}
\label{e-3.1} \lim_{n\to \infty} \frac{1}{n}\log \sum_{I\in \Sigma_n} \phi^{s/(q-1)}(T_I)^{1-q}p_I^q=0.
\end{equation}

Define $f\in C(\Sigma)$ by
$$
f(x)=\log p_{x_1} \mbox{ for } x=(x_i)_{i=1}^\infty\in \Sigma.
$$
For $q>0$, $q\neq 1$, assume that
\begin{equation}
\label{e-3.2}
\begin{split}
&\left\{ (1-q) \log \phi^{D(q)/(q-1)}(T_{x|n})\right\}_{n=1}^\infty \mbox{ is an asymptotically }\\ &\quad\mbox{sub-additive potential on $\Sigma$. }
\end{split}
\end{equation}
Then by \eqref{e-3.1},  $D(q)$ satisfies the following equation
\begin{equation}\label{e-3.3}
P(\sigma, G_q)=0,
\end{equation}
where $P$ denotes the pressure function (see section \ref{S-2}),  $G_q:=\{g_{n,q}\}_{n=1}^\infty$ is a potential  defined by
\begin{equation}
\label{ee-1}
g_{n,q}(x)=(1-q) \log \phi^{D(q)/(q-1)}(T_{x|n}) +q \sum_{k=0}^{n-1} f(\sigma^k x),
\end{equation}
By the assumption \eqref{e-3.2}, $G_q$ is asymptotically sub-additive.

\begin{rem}
\begin{itemize}
\item[(i)] The assumption \eqref{e-3.2}  always holds when $0<q<1$, since $\phi^s$ is sub-multiplicative for any $s\geq 0$ in the sense that $\phi^s(AB)\leq
    \phi^s(A)\phi^s(B)$ (cf. \cite{Fal88}). \item[(ii)] When $q>1$,  \eqref{e-3.2} holds  if $T_1,\ldots, T_m$ satisfy some additional assumption, for instance, all
    $T_i$ are the same, or each $T_i$ is of  the form   $$T_i=\mbox{diag}(t_{i,1}, t_{i,2},\ldots, t_{i,d}) \mbox{ with }
    t_{i,1}>t_{i,2}>\ldots>t_{i,d}>0.$$
\end{itemize}
\end{rem}
By \eqref{e-3.3} and Proposition \ref{pro-2.1}, we have

\begin{lem}
\label{lem-3.2} Let $q>0$, $q\neq 1$. Assume that \eqref{e-3.2} holds. Then
\begin{equation*}
\begin{split}
0=\sup\left\{h_\eta(\sigma)+(1-q) \phi_*^{D(q)/(q-1)}(\eta)+q \int fd\eta:\; \eta\in \M(\Sigma,\sigma)\right\}.
\end{split}
\end{equation*}
where $\phi^s_*(\cdot)$ is defined as in \eqref{e-2.1}. Moreover,
$$
h_\eta(\sigma)+(1-q) \phi_*^{D(q)/(q-1)}(\eta)+q \int fd\eta=0, \quad \forall\; \eta\in \I(G_q),
$$
where $\I(G_q)$ denotes the collection of the equilibrium states of the potential $G_q$ (cf. Section \ref{S-2.1}).

\end{lem}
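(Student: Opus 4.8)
The plan is to obtain Lemma~\ref{lem-3.2} as an essentially immediate consequence of the variational principle (Proposition~\ref{pro-2.1}) applied to the potential $G_q$, once we have identified $\Psi_*(\eta)$ for $\Psi = G_q$. First I would recall that, by the remark following \eqref{e-3.2}, the hypothesis \eqref{e-3.2} guarantees that $G_q = \{g_{n,q}\}_{n=1}^\infty$ is an asymptotically sub-additive potential on $\Sigma$, so Proposition~\ref{pro-2.1} applies and gives
$$
P(\sigma, G_q) = \sup\{h_\eta(\sigma) + (G_q)_*(\eta):\ \eta \in \M(\Sigma,\sigma)\}.
$$
The content of the lemma is then the identification of the left-hand side with $0$ and of $(G_q)_*(\eta)$ with $(1-q)\phi_*^{D(q)/(q-1)}(\eta) + q\int f\, d\eta$.

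For the first identification: by \eqref{e-3.1}, which restates the defining property \eqref{e-1.1} of $D(q)$, we have
$$
\lim_{n\to\infty}\frac1n \log \sum_{I\in\Sigma_n}\phi^{D(q)/(q-1)}(T_I)^{1-q} p_I^q = 0,
$$
and since $\sum_{k=0}^{n-1} f(\sigma^k x) = \log p_{x_1\ldots x_n}$ is constant on each cylinder $[I]$ and $\phi^s(T_{x|n})$ likewise depends only on $x_1\ldots x_n$, the sum $\sum_{I\in\Sigma_n}\sup_{x\in[I]}\exp(g_{n,q}(x))$ is exactly $\sum_{I\in\Sigma_n}\phi^{D(q)/(q-1)}(T_I)^{1-q}p_I^q$. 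Hence $P(\sigma,G_q) = 0$, which is \eqref{e-3.3}.

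For the second identification: from the definition \eqref{ee-1} of $g_{n,q}$, linearity of the integral gives
$$
\frac1n\int g_{n,q}\, d\eta = (1-q)\cdot\frac1n\int \log\phi^{D(q)/(q-1)}(T_{x|n})\,d\eta(x) + q\cdot\frac1n\int \sum_{k=0}^{n-1} f(\sigma^k x)\, d\eta(x).
$$
By $\sigma$-invariance of $\eta$, the second term equals $q\int f\, d\eta$ for every $n$; the first term converges, as $n\to\infty$, to $(1-q)\phi_*^{D(q)/(q-1)}(\eta)$ by the very definition \eqref{e-2.1} of $\phi_*^s$ (the relevant limit existing by sub-multiplicativity of $\phi^s$). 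Thus $(G_q)_*(\eta) = (1-q)\phi_*^{D(q)/(q-1)}(\eta) + q\int f\, d\eta$, and combining with $P(\sigma,G_q)=0$ yields the displayed supremum formula. Finally, for $\eta\in\I(G_q)$, the defining equality $h_\eta(\sigma) + (G_q)_*(\eta) = P(\sigma,G_q) = 0$ is precisely the asserted identity $h_\eta(\sigma) + (1-q)\phi_*^{D(q)/(q-1)}(\eta) + q\int f\, d\eta = 0$, and $\I(G_q)\neq\emptyset$ by the remarks preceding Lemma~\ref{lem-2.2}.

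The only point requiring genuine care — and the place I expect the main (albeit modest) obstacle — is the interchange of limit and integral in passing from $\frac1n\int g_{n,q}\,d\eta$ to its limit: one must argue that $\frac1n\int\log\phi^{D(q)/(q-1)}(T_{x|n})\,d\eta$ converges to $\phi_*^{D(q)/(q-1)}(\eta)$, which is handled by the subadditive ergodic theorem together with the fact that $n\mapsto \log\phi^s(T_{x|n})$ is (essentially) a subadditive sequence of integrable functions, the integrability being immediate since $\phi^s(T_{x|n})$ takes only finitely many positive values for each $n$. One should also note explicitly that when $q>1$ the whole argument is conditional on \eqref{e-3.2}, exactly as stated in the hypothesis of the lemma, so no additional case analysis is needed here.
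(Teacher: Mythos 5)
Your proposal is correct and takes essentially the same route as the paper, which derives the lemma directly from the variational principle (Proposition~\ref{pro-2.1}) applied to $G_q$ together with the identity $P(\sigma,G_q)=0$ from \eqref{e-3.1}; you have simply filled in the routine identifications that the paper leaves implicit. (One cosmetic note: the convergence of $\frac1n\int\log\phi^{s}(T_{x|n})\,d\eta$ in \eqref{e-2.1} needs only Fekete's subadditivity lemma for the numerical sequence $n\mapsto\int\log\phi^{s}(T_{x|n})\,d\eta$, not the full subadditive ergodic theorem.)
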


For $\eta\in \M(\Sigma,\sigma)$, denote
\begin{equation}
\label{e-3.4} \lambda_i(\eta):=\lim_{n\to \infty}\frac{1}{n}\int \log \alpha_i(T_{x|n})\; d\eta(x), \quad i=1,\ldots, d,
\end{equation}
where $\alpha_i(A)$  denotes the $i$-th singular value of $A$. We write $\lambda_0(\eta)=0$ for convention. It is easy to see that
$\lambda_i(\eta)=\phi^{i}_*(\eta)-\phi^{i-1}_*(\eta)$ for $1\leq i\leq d$. In particular, if $s\in [k, k+1)$ for some integer $0\leq k\leq d-1$, then
\begin{equation}
\label{e-3.6} \phi^s_*(\eta)=\lambda_1(\eta)+\ldots+\lambda_k(\eta)+(s-k) \lambda_{k+1}(\eta)=\phi^k_*(\eta)+(s-k) \lambda_{k+1}(\eta).
\end{equation}

\begin{lem}
\label{lem-915} Let  $\eta$ be an ergodic measure on $\Sigma$.  Then for $\eta$-a.e $x\in \Sigma$,
$$
\lim_{n\to \infty }\frac{\log \alpha_i(T_{x|n})}{n}=\lambda_i(\eta),\qquad i=1,\ldots, d.
$$
\end{lem}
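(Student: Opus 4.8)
The statement is a multiplicative-ergodic-theorem-type claim for the singular values of the random matrix products $T_{x|n}$, for an ergodic measure $\eta$ on the full shift. The natural approach is to reduce each singular value to a ratio of norms of exterior powers and then apply the subadditive ergodic theorem of Kingman. Concretely, for a $d\times d$ non-singular matrix $A$ one has $\alpha_1(A)\cdots\alpha_i(A)=\|\wedge^i A\|$, the operator norm of the $i$-th exterior power acting on $\wedge^i\R^d$; equivalently $\phi^i(T)=\|\wedge^i T\|$ in the notation of Definition~\ref{de-1.1}, so that $\log\alpha_i(A)=\log\|\wedge^iA\|-\log\|\wedge^{i-1}A\|$ with the convention $\wedge^0A=1$.

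First I would record that the map $n\mapsto \log\|\wedge^i T_{x|n}\|$ is subadditive along the orbit: since $\wedge^i$ is multiplicative, $\|\wedge^i T_{x|(n+k)}\|=\|\wedge^i(T_{x|n})\,\wedge^i(T_{\sigma^nx|k})\|\le \|\wedge^i T_{x|n}\|\cdot\|\wedge^i T_{\sigma^n x|k}\|$, i.e. $\log\|\wedge^iT_{x|(n+k)}\|\le \log\|\wedge^iT_{x|n}\|+\log\|\wedge^i T_{\sigma^n x|k}\|$. The functions $x\mapsto\log\|\wedge^iT_{x|n}\|$ are continuous on $\Sigma$ (they depend only on $x_1\ldots x_n$, so $\Sigma$ being compact they are bounded), and one checks the $L^1(\eta)$ integrability of the positive part easily and of the negative part using non-singularity of the $T_j$ together with the fact that there are finitely many of them (so $\log\|\wedge^iT_{x|n}\|\ge n\min_j\log\|\wedge^iT_j^{-1}\|^{-1}$ stays above a linear lower bound). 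Thus Kingman's subadditive ergodic theorem applies: since $\eta$ is ergodic, for $\eta$-a.e.\ $x$,
$$
\lim_{n\to\infty}\frac{1}{n}\log\|\wedge^i T_{x|n}\|=\inf_{n\ge 1}\frac1n\int\log\|\wedge^iT_{x|n}\|\,d\eta(x)=\phi^i_*(\eta),
$$
the last equality being exactly the definition \eqref{e-2.1} of $\phi^i_*(\eta)$ (the limit there existing by subadditivity as noted after \eqref{e-2.1}).

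Finally I would combine the cases $i$ and $i-1$: taking the intersection over $i=1,\ldots,d$ of the full-measure sets just obtained, for $\eta$-a.e.\ $x$ all $d+1$ limits $\frac1n\log\|\wedge^jT_{x|n}\|\to\phi^j_*(\eta)$ ($j=0,1,\ldots,d$) hold simultaneously, whence
$$
\lim_{n\to\infty}\frac{\log\alpha_i(T_{x|n})}{n}=\lim_{n\to\infty}\frac{\log\|\wedge^iT_{x|n}\|-\log\|\wedge^{i-1}T_{x|n}\|}{n}=\phi^i_*(\eta)-\phi^{i-1}_*(\eta)=\lambda_i(\eta),
$$
using the identity $\lambda_i(\eta)=\phi^i_*(\eta)-\phi^{i-1}_*(\eta)$ recorded just above Lemma~\ref{lem-915} (and $\lambda_0(\eta)=0$, $\phi^0_*(\eta)=0$). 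This is the asserted convergence. The only genuinely delicate point is the integrability/finiteness of the $\phi^j_*(\eta)$, i.e.\ that the negative parts of $\log\|\wedge^iT_{x|n}\|$ are under control so that Kingman's theorem yields a finite (rather than $-\infty$) limit and the subtraction $\phi^i_*-\phi^{i-1}_*$ is well-defined; this is handled by the non-singularity of each $T_j$ and there being only finitely many generators, giving a uniform two-sided linear bound $c_1 n\le \log\|\wedge^iT_{x|n}\|\le c_2 n$ with $c_1,c_2$ finite. Everything else is a routine application of the subadditive ergodic theorem.
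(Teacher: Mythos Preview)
Your proof is correct and follows essentially the same route as the paper: apply Kingman's subadditive ergodic theorem to the submultiplicative sequence $\log\phi^i(T_{x|n})=\log\|\wedge^iT_{x|n}\|$ to get $\frac{1}{n}\log\phi^i(T_{x|n})\to\phi^i_*(\eta)$ almost surely, and then subtract consecutive limits using $\log\alpha_i(A)=\log\phi^i(A)-\log\phi^{i-1}(A)$. Your extra care about integrability and the two-sided linear bounds is fine but not strictly needed here, since the functions depend only on finitely many coordinates and the $T_j$ are non-singular.
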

\begin{proof}
Let $s\geq 0$. Since $\phi^s$ is sub-multiplicative, by Kingman's sub-additive ergodic theorem (cf. \cite[Theorem 10.1]{Wal82}),
$$
\lim_{n\to \infty }\frac{\log \phi^s(T_{x|n})}{n}=\phi^s_*(\eta)\quad \mbox{ for $\eta$-a.e. $x\in \Sigma$}.
$$
Now Lemma \ref{lem-915} follows from the fact that $\log \alpha_i(A)=\log \phi^i(A)-\log \phi^{i-1}(A)$ for $i=1,\ldots, d$.
\end{proof}

In the following proposition, we give a formula for the derivative of $D(q)$.
\begin{pro}
\label{pro-3.3} Let $q>0$, $q\neq 1$. Assume that \eqref{e-3.2} holds. If $\frac{D(q)}{q-1}\in (k, k+1)$ for some integer $0\leq k\leq d-1$, then
\begin{equation}
\label{e-3.5}
\begin{split}
&D'(q-)\geq \sup_{\eta\in \I(G_q)}\frac{\int f d\eta-\phi^k_*(\eta)}{\lambda_{k+1}(\eta)}+k,\\ &D'(q+)\leq \inf_{\eta\in \I(G_q)}\frac{\int f
d\eta-\phi^k_*(\eta)}{\lambda_{k+1}(\eta)}+k.\\
\end{split}
\end{equation}
In particular, if in addition $D'(q)$ exists, then
\begin{equation}
\label{e-3.8} D'(q)=\frac{\int f d\eta-\phi^k_*(\eta)}{\lambda_{k+1}(\eta)}+k,\qquad\forall\; \eta\in \I(G_q).
\end{equation}
\end{pro}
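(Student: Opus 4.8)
## Proof proposal for Proposition 3.3

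The plan is to extract the derivative bounds from the zero-pressure equation $P(\sigma,G_q)=0$ by a standard convexity/variational argument, treating $D(q)$ implicitly as the root of a pressure function in $q$. First I would fix $q$ with $D(q)/(q-1)\in(k,k+1)$ and choose a small neighborhood $N$ of $q$ on which $D(t)/(t-1)$ stays in $(k,k+1)$ (possible by continuity of $D$, established in the excerpt). For $t$ in this neighborhood, \eqref{e-3.6} gives the clean linear expression $\phi^{D(t)/(t-1)}_*(\eta)=\phi^k_*(\eta)+\bigl(\tfrac{D(t)}{t-1}-k\bigr)\lambda_{k+1}(\eta)$, so that for each $\eta\in\M(\Sigma,\sigma)$ the quantity
$$
F(t,s,\eta):=h_\eta(\sigma)+(1-t)\Bigl(\phi^k_*(\eta)+\bigl(\tfrac{s}{t-1}-k\bigr)\lambda_{k+1}(\eta)\Bigr)+t\int f\,d\eta
$$
is affine in $s$ and (for fixed $\eta$) concave—indeed affine after the substitution—in $t$. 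Lemma \ref{lem-3.2} says precisely that $\sup_\eta F(t,D(t),\eta)=0$ for all $t\in N$, with the supremum attained on $\I(G_t)$.

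The core step is a one-sided comparison. Fix $\eta_0\in\I(G_q)$, so $F(q,D(q),\eta_0)=0$. For $t>q$ in $N$, using $\sup_\eta F(t,D(t),\eta)=0\ge F(t,D(t),\eta_0)$ and then subtracting $F(q,D(q),\eta_0)=0$, I would expand the difference. Writing $s=D(t)$, $s_0=D(q)$, the $s$-dependence is through $(1-t)\tfrac{s}{t-1}\lambda_{k+1}(\eta_0)=-s\,\lambda_{k+1}(\eta_0)$, which is the crucial simplification: the coefficient of $s$ is the constant $-\lambda_{k+1}(\eta_0)$, independent of $t$. Hence
$$
0\ge F(t,D(t),\eta_0)-F(q,D(q),\eta_0)=-\bigl(D(t)-D(q)\bigr)\lambda_{k+1}(\eta_0)+(t-q)\Bigl(\int f\,d\eta_0-\phi^k_*(\eta_0)+k\,\lambda_{k+1}(\eta_0)\Bigr),
$$
where I have collected the remaining $(t-q)$-linear terms (the $h_\eta$ term drops, the $(1-t)\phi^k_*(\eta_0)$ and $t\int f\,d\eta_0$ and the $k\lambda_{k+1}$ part contribute linearly in $t$, and in fact this identity is exact because $F$ is affine in $t$ once $s$ is frozen—so there is no error term). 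Dividing by $(t-q)>0$ and by $\lambda_{k+1}(\eta_0)>0$ and letting $t\downarrow q$ yields $D'(q+)\le \dfrac{\int f\,d\eta_0-\phi^k_*(\eta_0)}{\lambda_{k+1}(\eta_0)}+k$; since $\eta_0\in\I(G_q)$ was arbitrary, this gives the second inequality in \eqref{e-3.5}. Running the same computation with $t<q$ (so dividing by $t-q<0$ reverses the inequality) and letting $t\uparrow q$ gives the first inequality $D'(q-)\ge \dfrac{\int f\,d\eta_0-\phi^k_*(\eta_0)}{\lambda_{k+1}(\eta_0)}+k$. Combining: $D'(q-)\ge\sup_{\I(G_q)}(\cdots)\ge\inf_{\I(G_q)}(\cdots)\ge D'(q+)$. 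But $D$ is concave on the interval $J_k$ containing $q$, so $D'(q-)\ge D'(q+)$ automatically, and when $D'(q)$ exists all four quantities coincide, giving \eqref{e-3.8}.

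A few points need care. I must check that $\lambda_{k+1}(\eta)>0$ for $\eta\in\I(G_q)$—this should follow because $D(q)/(q-1)$ lies strictly above $k$ (not equal to $k$), so $\phi^{D(q)/(q-1)}_*$ genuinely involves $\lambda_{k+1}$, together with the fact that any equilibrium state must see the relevant singular value directions nontrivially; otherwise the formula \eqref{e-3.8} would be vacuous or the root $D(q)$ would not be uniquely pinned in $(k,k+1)$. One also needs that $\I(G_t)$ is nonempty for $t$ near $q$ (Lemma \ref{lem-2.2}) and that $F(q,D(q),\eta_0)=0$ exactly—this is the ``moreover'' clause of Lemma \ref{lem-3.2}. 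I expect the main obstacle to be a clean justification that the difference $F(t,D(t),\eta_0)-F(q,D(q),\eta_0)$ is genuinely affine/exact in $(t-q)$ once $D(t)$ is substituted—i.e. that there is no hidden nonlinearity—and relatedly the positivity $\lambda_{k+1}(\eta_0)>0$; both are essentially bookkeeping with \eqref{e-3.6} and the definition of $\I(G_q)$, but they are where the argument could silently break if $D(q)/(q-1)$ were allowed to touch an integer.
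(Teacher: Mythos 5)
Your argument is essentially the paper's: fix $\eta_0\in\I(G_q)$, use the equality case of Lemma \ref{lem-3.2} at $q$ and the inequality case at $t$ near $q$, observe that after substituting \eqref{e-3.6} the $s$-coefficient is the constant $-\lambda_{k+1}(\eta_0)$ so the subtraction is exact, then divide and pass to one-sided limits. The exactness you worry about is not an issue, for precisely the reason you give.

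There is, however, one concrete error in the bookkeeping: you assert and use $\lambda_{k+1}(\eta_0)>0$, but in fact $\lambda_{k+1}(\eta_0)<0$. Indeed $\lambda_{k+1}(\eta)=\lim_n \frac1n\int\log\alpha_{k+1}(T_{x|n})\,d\eta$ and the $T_i$ are contractions, so $\alpha_{k+1}(T_{x|n})\le \max_i\|T_i\|^n<1$; non-singularity gives the lower bound, so $\lambda_{k+1}(\eta)\in[\log c,\log C]$ with $C<1$, hence it is strictly negative (which is also what makes the division legitimate). With your sign convention the chain $0\ge -(D(t)-D(q))\lambda_{k+1}(\eta_0)+(t-q)A$, divided by $(t-q)>0$ and then by a \emph{positive} $\lambda_{k+1}$, would yield $D'(q+)\ge A/\lambda_{k+1}(\eta_0)$ --- the reverse of what you state and of what the proposition claims. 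The inequalities you actually wrote down are the correct ones, and they are exactly what falls out once the division by the negative quantity $\lambda_{k+1}(\eta_0)$ flips the inequality; so the fix is a one-line sign correction, after which your proof coincides with the paper's.
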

\begin{proof}
First fix $\eta\in \I(G_q)$. By \eqref{e-3.6}, we have
$$(1-q)\phi^{D(q)/(q-1)}_*(\eta)=(1-q)(\phi^k_*(\eta)-k\lambda_{k+1}(\eta))-D(q)\lambda_{k+1}(\eta).
$$
Combining this  with Lemma \ref{lem-3.2} yields
\begin{equation}
\label{e-3.7} -D(q)\lambda_{k+1}(\eta)+q A +B=0,
\end{equation}
where $$A:=\int f d\eta-\phi^k_*(\eta)+k\lambda_{k+1}(\eta), \quad B:=h_\eta(\sigma)+\phi^k_*(\eta)-k\lambda_{k+1}(\eta).
$$
For small $\epsilon\in \R$, apply  Lemma \ref{lem-3.2} (in which $q$ is replaced by $q+\epsilon$) to obtain
\begin{equation}
\label{e-3.88} -D(q+\epsilon)\lambda_{k+1}(\eta)+(q+\epsilon) A +B\leq  0.
\end{equation}
Subtracting \eqref{e-3.7} from \eqref{e-3.88} yields
$$
(D(q)-D(q+\epsilon))\lambda_{k+1}(\eta)+\epsilon A\leq 0.
$$
Hence
\begin{equation*}
\begin{split}
&\frac{D(q+\epsilon)-D(q)}{\epsilon}\leq \frac{A}{\lambda_{k+1}(\eta)} \quad\mbox{ if }\epsilon >0, \mbox{ and} \\
& \frac{D(q+\epsilon)-D(q)}{\epsilon}\geq
\frac{A}{\lambda_{k+1}(\eta)} \quad\mbox{ if }\epsilon <0.
\end{split}
\end{equation*}
Letting $\epsilon\to 0$, we obtain
$$
D'(q+)\leq \frac{A}{\lambda_{k+1}(\eta)} \mbox{ and  }D'(q-)\geq \frac{A}{\lambda_{k+1}(\eta)}.
$$
Letting $\eta$ run over $\I(G_q)$, we obtain \eqref{e-3.5}. It implies that if $D'(q)$ exists, then \eqref{e-3.8} holds.
\end{proof}

As the main result of this section, we have
\begin{pro}
\label{pro-3.4} Let $q>0$, $q\neq 1$.
\begin{itemize}
\item[(i)] If $0<q<1$ and $\frac{D(q)}{q-1}\in (0,1)$, then
\begin{equation}
\label{e-3.5'} D'(q-)= \sup_{\eta\in \I(G_q)}\frac{\int f d\eta}{\lambda_{1}(\eta)},\quad D'(q+)= \inf_{\eta\in \I(G_q)}\frac{\int f d\eta}{\lambda_{1}(\eta)}.
\end{equation}
Furthermore, for $\alpha\in \{D'(q+), D'(q-)\}$, there exists an ergodic measure $\eta\in \I(G_q)$ such that $\alpha=\frac{\int f d\eta}{\lambda_{1}(\eta)}$.

\item[(ii)] Assume  that $T_i$ ($i=1,\ldots,m$) are of  the form   \begin{equation}
\label{e-ee1}T_i=\mbox{diag}(t_{i,1}, t_{i,2},\ldots, t_{i,d})
\end{equation}
 with $t_{i,1}>t_{i,2}>\ldots>t_{i,d}>0.$
If $k< \frac{D(q)}{q-1}< k+1$ for some integer $0\leq k\leq d-1$, then  $D'(q)$ exists and there exists an ergodic measure $\eta\in \I(G_q)$ such that  then
\begin{equation}
\label{e-3.8'} D'(q)=\frac{\int f d\eta-\phi^k_*(\eta)}{\lambda_{k+1}(\eta)}+k.
\end{equation}
\end{itemize}
\end{pro}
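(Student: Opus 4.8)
The plan is to build on Proposition~\ref{pro-3.3}, which already gives the two-sided estimates \eqref{e-3.5}, and to show that (a) in both the one-dimensional projection case (i) and the diagonal case (ii) the relevant ratio is \emph{constant} over $\I(G_q)$, so that $D'(q)$ exists and the $\sup$/$\inf$ collapse to a single value, and (b) this common value is attained by an \emph{ergodic} member of $\I(G_q)$, which follows from Lemma~\ref{lem-2.2} together with an affinity argument. The main work is establishing the constancy; attaining the value at an ergodic measure is then a soft ergodic-decomposition step.

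\medskip
\textbf{Part (i).} Here $k=0$, so $\phi^0_*(\eta)=0$ and the ratio in \eqref{e-3.5} is $\int f\,d\eta/\lambda_1(\eta)$. First I would observe that on $\I(G_q)$ the constraint from Lemma~\ref{lem-3.2} is an equality: $h_\eta(\sigma)+(1-q)\phi_*^{D(q)/(q-1)}(\eta)+q\int f\,d\eta=0$ for every $\eta\in\I(G_q)$. Since $D(q)/(q-1)\in(0,1)$, we have $\phi_*^{D(q)/(q-1)}(\eta)=\frac{D(q)}{q-1}\lambda_1(\eta)$ by \eqref{e-3.6}, so this reads $h_\eta(\sigma)+q\int f\,d\eta=D(q)\lambda_1(\eta)$. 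Now $\I(G_q)$ is compact and convex (Lemma~\ref{lem-2.2}), and the three functionals $h_\bullet(\sigma)$, $\int f\,d\bullet$, $\lambda_1(\bullet)=\phi^1_*(\bullet)$ are all affine on $\M(\Sigma,\sigma)$ — entropy is affine, integration against $f$ is linear, and $\phi^1_*$ is the limit of the linear functionals $\eta\mapsto\frac1n\int\log\alpha_1(T_{x|n})\,d\eta$. Hence both $\eta\mapsto \int f\,d\eta$ and $\eta\mapsto\lambda_1(\eta)$ are affine on $\I(G_q)$; combined with the affine relation above, standard convexity reasoning (a ratio of two affine functionals that are linked by an affine relation with the affine functional $h$) shows $\int f\,d\eta/\lambda_1(\eta)$ takes the same value at all $\eta\in\I(G_q)$. (Concretely: if it differed at two points it would differ on the segment between them, but Proposition~\ref{pro-3.3} squeezes every such value between $D'(q-)$ and $D'(q+)$ while simultaneously forcing $D'(q-)\ge$ the sup and $D'(q+)\le$ the inf, a contradiction unless sup $=$ inf.) This proves $D'(q)$ exists and \eqref{e-3.5'}. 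For the final clause, decompose any $\eta\in\I(G_q)$ into ergodic components; by Lemma~\ref{lem-2.2} the extreme points of $\I(G_q)$ are ergodic, and one of them must realize the value of the affine functional, giving the ergodic $\eta$ with $\alpha=\int f\,d\eta/\lambda_1(\eta)$.

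\medskip
\textbf{Part (ii).} Under the strictly-ordered diagonal hypothesis \eqref{e-ee1}, the potential $G_q$ is genuinely sub-additive (Remark (ii)), and the point of the diagonal structure is that the singular values of $T_I$ are just the products $\prod t_{i_j,\ell}$, $\ell=1,\dots,d$, so $\lambda_\ell(\eta)=\sum_i\eta([i])\log t_{i,\ell}$ is genuinely \emph{linear} in $\eta$ (not merely affine through a limit), and moreover — because the diagonal entries are strictly ordered — the ordering of singular values is stable, so $\phi^s_*$ is piecewise linear in $s$ with the ``correct'' coefficients $\lambda_1,\dots,\lambda_d$ throughout a full neighborhood of every measure. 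Then I run the same scheme: on $\I(G_q)$ Lemma~\ref{lem-3.2} gives the equality $h_\eta(\sigma)+(1-q)\phi_*^{D(q)/(q-1)}(\eta)+q\int f\,d\eta=0$, rewritten via \eqref{e-3.6} with our $k$ as $h_\eta(\sigma)+(1-q)\bigl(\phi^k_*(\eta)-k\lambda_{k+1}(\eta)\bigr)-D(q)\lambda_{k+1}(\eta)+q\int f\,d\eta=0$. All of $h_\bullet$, $\phi^k_*$, $\lambda_{k+1}$, $\int f\,d\bullet$ are affine on $\I(G_q)$, so exactly as in Part (i) the ratio $A/\lambda_{k+1}(\eta)=\bigl(\int f\,d\eta-\phi^k_*(\eta)+k\lambda_{k+1}(\eta)\bigr)/\lambda_{k+1}(\eta)$ — equivalently $\bigl(\int f\,d\eta-\phi^k_*(\eta)\bigr)/\lambda_{k+1}(\eta)+k$ — is constant on $\I(G_q)$, squeezed between $D'(q-)$ and $D'(q+)$ by \eqref{e-3.5}; hence $D'(q)$ exists and equals that constant, proving \eqref{e-3.8'}. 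Picking an ergodic extreme point of $\I(G_q)$ (Lemma~\ref{lem-2.2}) that realizes the common value finishes it.

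\medskip
\textbf{Anticipated obstacle.} The genuinely delicate point is the constancy of the ratio on $\I(G_q)$: affinity of numerator and denominator alone does not make a quotient constant, so one must use the additional affine constraint coming from the equilibrium condition to pin it down — or, more cleanly, exploit the squeeze in Proposition~\ref{pro-3.3} as I sketched above, noting that $\eta\mapsto\lambda_{k+1}(\eta)$ does not vanish on $\I(G_q)$ (it is bounded away from $0$ since $\max_i t_{i,k+1}<1$, resp.\ $\|T_i\|<1/2<1$). A secondary technical care point in (ii) is checking that the strict ordering $t_{i,1}>\dots>t_{i,d}$ is exactly what guarantees $\frac{D(q)}{q-1}\notin\Z$ forces a unique $k$ with $\phi^s_*$ differentiable in $s$ at $s=\frac{D(q)}{q-1}$ uniformly over a neighborhood in $\M(\Sigma,\sigma)$, which is what legitimizes differentiating the pressure equation; this is where the hypothesis $k<\frac{D(q)}{q-1}<k+1$ (strict) is used, and also where one invokes the concavity/piecewise-concavity of $D$ recorded after \eqref{e-1.2} to conclude $D'(q)$ genuinely exists rather than merely being squeezed.
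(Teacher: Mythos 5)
Your proposal hinges on the claim that the ratio $\int f\,d\eta/\lambda_{1}(\eta)$ (resp.\ $(\int f\,d\eta-\phi^k_*(\eta))/\lambda_{k+1}(\eta)+k$) is \emph{constant} on $\I(G_q)$, and this claim is not established; indeed it is false in general, which is exactly why part (i) of the statement keeps $D'(q-)$ and $D'(q+)$ separate (as a sup and an inf) rather than asserting that $D'(q)$ exists. Affinity of $h_\bullet(\sigma)$, $\int f\,d\bullet$ and $\lambda_1(\bullet)$ together with the single affine constraint $h_\eta(\sigma)+q\int f\,d\eta=D(q)\lambda_1(\eta)$ does not pin down the quotient, and the ``squeeze'' you invoke produces no contradiction: Proposition~\ref{pro-3.3} gives $D'(q-)\ge\sup\ge\inf\ge D'(q+)$, which is perfectly consistent with $\sup>\inf$ since concavity only yields $D'(q-)\ge D'(q+)$. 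What is actually needed --- and what your proposal omits --- are the \emph{reverse} inequalities $D'(q-)\le\sup$ and $D'(q+)\ge\inf$. The paper obtains these by a limiting argument: using concavity of $D$ near $q$, pick $s_n\uparrow q$ at which $D'(s_n)$ exists, take $\eta_n\in\I(G_{s_n})$, and show via upper semi-continuity of entropy and of $\lambda_1$ (Lemma~\ref{lem-2.3}) that a weak-star limit $\eta$ lies in $\I(G_q)$ with $\lambda_1(\eta_n)\to\lambda_1(\eta)$, whence $D'(q-)=\lim D'(s_n)=\int f\,d\eta/\lambda_1(\eta)$. Nothing in your write-up substitutes for this step.

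In part (ii), the existence of $D'(q)$ does not come from constancy of a ratio over a possibly large $\I(G_q)$; it comes from the fact that under the strictly ordered diagonal hypothesis the potential $G_q$ is additive with a generator depending only on the first coordinate, hence H\"older, so $\I(G_q)$ is a \emph{singleton} consisting of an ergodic measure (classical uniqueness of equilibrium states), and then the sup and inf in the analogue of \eqref{e-3.5'} trivially coincide. You note the additivity but never invoke uniqueness, which is the load-bearing ingredient. Your treatment of the ergodicity clause in (i) via Krein--Milman and extreme points is essentially the paper's argument and is fine; but as it stands the core of both parts rests on an unproved (and generally false) constancy assertion, so the proof has a genuine gap.
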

\begin{proof} We first prove (i). Assume that $0<q<1$ satisfying that $D(q)/(q-1)\in (0,1)$. By continuity, there exists  a neighborhood $\Delta$ of $q$ so that $\Delta\subset (0,1)$ and  $D(t)/(t-1)\in (0,1)$ for any $t\in \Delta$. Let $(q_n)\subset \Delta$ be a sequence so that
$\lim_{n\to \infty}q_n=q$. Take $\eta_n\in \I(G_{q_n})$. By \eqref{e-3.3}, $(G_{q_n})_*(\eta_n)+h_{\eta_n}(\sigma)=0$.
Taking
a subsequence if necessary we may assume that $\eta_n$ converges to some $\eta\in \M(\Sigma,\sigma)$ in the weak-star topology. We claim that
$\eta\in \I(G_q)$ and $\limsup_{n\to \infty}\lambda_1(\eta_n)=\lambda_1(\eta)$.

To prove the claim, we notice that the map $\mu\mapsto \lambda_1(\mu)$ is upper semi-continuous on $\M(\Sigma, \sigma)$. This follows from Lemma \ref{lem-2.3}, in which we take $\Psi=\{\log \phi^1(T_{x|n})\}_{n=1}^\infty$.  For $t\in \Delta$ and $\mu\in \M(\Sigma,\sigma)$, by \eqref{ee-1}, we have
$$
(G_t)_*(\mu)=-D(t)\lambda_1(\mu)+t\int f d\mu.
$$
Hence
\begin{eqnarray*}\limsup_{n\to \infty} (G_{q_n})_*(\eta_n)&=&-D(q)\limsup_{n\to \infty}\lambda_1(\eta_n)+q \int fd\eta\\
&\leq& -D(q)\lambda_1(\eta)+q \int fd\eta=(G_q)_*(\eta).
\end{eqnarray*}
Meanwhile
$\limsup_{n\to \infty} h_{\eta_n}(\sigma)\leq h_\eta(\sigma)$ by the upper semi-continuity of $h_{(\cdot)}(\sigma)$.
It follows that $$(G_q)_*(\eta)+h_\eta(\sigma)\geq \limsup_{n\to \infty} ((G_{q_n})_*(\eta_n)+h_{\eta_n}(\sigma))=0.$$
However, by Proposition \ref{pro-2.1} and \eqref{e-3.3},
$0=P(\sigma,G_q)\geq (G_q)_*(\eta)+h_\eta(\sigma)$. Hence we have $(G_q)_*(\eta)+h_\eta(\sigma)=0=(G_{q_n})_*(\eta_n)+h_{\eta_n}(\sigma)$. Thus $\eta\in \I(G_q)$, and moreover,
$\limsup_{n\to \infty}\lambda_1(\eta_n)=\lambda_1(\eta)$.

Since $D$ is concave in a neighborhood of $q$ (see Proposition \ref{Dconcavity}), we can take two sequences $(s_n)$, $(t_n)$ such that $s_n\uparrow q$,  $t_n\downarrow q$ and $D'(s_n)$, $D'(t_n)$ exist.  Then
$D'(q-)=\lim_{n\to \infty}D'(s_n)$ and $D'(q+)=\lim_{n\to \infty}D'(t_n)$.
Take $\eta_n'\in \I(G_{s_n})$. Taking
a subsequence if necessary, we may assume that $\eta_n'$ converges to some $\eta\in \M(\Sigma,\sigma)$ in the weak-star topology. By the above claim, we have
$\eta\in \I(G_q)$ and $\limsup_{n\to \infty}\lambda_1(\eta_n')=\lambda_1(\eta)$. Hence by Proposition \ref{pro-3.3},
$$
D'(q-)=\lim_{n\to \infty} D'(s_n)=\lim_{n\to \infty}\frac{\int f\; d\eta_n'}{\lambda_1(\eta_n')}=\frac{\int f\;d\eta}{\lambda_1(\eta)}.
$$
Combining this with \eqref{e-3.5} yields $D'(q-)=\sup_{\mu\in\I(G_q)}\frac{\int f\;d\mu}{\lambda_1(\mu)}$. Similarly we can show that $D'(q+)=\inf_{\mu\in\I(G_q)}\frac{\int f\;d\mu}{\lambda_1(\mu)}$.

Now let $\alpha\in \{D'(q-),D'(q+)\}$. Define $$\I_\alpha=\left\{\mu\in \I(G_q):\; \frac{\int f\;d\mu}{\lambda_1(\mu)}=\alpha\right\}.$$
The arguments in the last paragraph imply that $\I_\alpha\neq \emptyset.$ Furthermore one can check that  $\I_\alpha$ is compact and convex. We are going to show that
$\I_\alpha$ contains at least one ergodic measure. Without loss of generality, we assume that $\alpha=D'(q-)$.  By the Krein-Milman theorem
(c.f. \cite[p. 146]{Con-book}), $\I_\alpha$ contains at least one
extreme point, denoted by $\nu$. Let $\nu=p\nu_1+(1-p)\nu_2$ for
some $0<p<1$ and $\nu_1,\nu_2\in \mathcal{M}(\Sigma,\sigma)$. Then
\begin{align*}
P(\sigma, G_q)=h_\nu(\sigma)+(G_q)_*(\nu)
=p(h_{\nu_1}(\sigma)+(G_q)_*(\nu_1))+(1-p)(h_{\nu_2}(\sigma)+(G_q)_*(\nu_2)).
\end{align*}
By Proposition \ref{pro-2.1}, $\nu_1,\nu_2\in
\mathcal{I}(G_q)$. Since
$$\alpha=\sup_{\eta\in \I(G_q)}\frac{\int f d\eta}{\lambda_1(\eta)}=\frac{\int f d\nu}{\lambda_1(\nu)}=\frac{p\int f d\nu_1+(1-p)\int f d\nu_2}{p\lambda_1(\nu_1)+(1-p)\lambda_1(\nu_2)},
$$
we must have $\nu_1,\nu_2\in \I_\alpha$. Since $\nu$ is an extreme point of
$\I_\alpha$, we have $\nu_1=\nu_2=\nu$. It follows that $\nu$ is an extreme point
of $\mathcal{M}(\Sigma,\sigma)$, i.e., $\nu$ is ergodic. Therefore
$\I_\alpha$ contains an ergodic measure. This finishes the proof of (i).

Now we turn to the proof of (ii).   Under the additional assumption \eqref{e-ee1} on $T_i$'s, we can adapt the proof of (i) to show that  if $D(q)/(q-1)\in (k, k+1)$ for some $0\leq k\leq d-1$,
then
\begin{equation}
\label{e-ee}
D'(q-)=\sup_{\eta\in \I(G_q)}\frac{\int f d\eta-\phi^k_*(\eta)}{\lambda_{k+1}(\eta)}+k,\quad D'(q+)=\inf_{\eta\in \I(G_q)}\frac{\int f d\eta-\phi^k_*(\eta)}{\lambda_{k+1}(\eta)}+k.
\end{equation}
Indeed, under this new assumption on $T_i$'s, we see that the potential $G_q=\{g_{n,q}\}$ is additive in the sense that $g_{n,q}=\sum_{i=0}^{n-1}h(x)$ for some continuous function $h$ on $\Sigma$. Moreover, $h(x)$ depends only on the first coordinate of $x$.  Therefore the maps $\mu\mapsto \lambda_k(\mu)$, $\mu\mapsto \phi^k_*(\mu)$ are continuous over $\M(\Sigma,\sigma)$. Based on this fact, \eqref{e-ee} can be proved in a way similar to that of (i). We ignore the details.  Since $h(x)$ only depends on the first coordinate of $x$, $h$ is H\"{o}lder continuous. Therefore $\I(G_q)$ is a singleton consisting of an ergodic measure (see, e.g., \cite[Theorem 1.2]{Bow75}).
This together with \eqref{e-ee} proves \eqref{e-3.8'}.
  \end{proof}

\begin{rem} Assume that $T_i$, $i=1,\ldots,m$, satisfy the following irreducibility condition: there is no proper subspace $V\neq \{\bf 0\}$ of $\R^d$ so that $T_i(V)\subset V$. Then $\phi^1$ satisfies certain quasi-multiplicative property  which guarantees that $\I(G_q)$ is a singleton (and hence $D'(q)$ exists by Proposition \ref{pro-3.4}(i)) provided that
$0<q<1$ and $\frac{D(q)}{q-1}\in (0,1)$.   More generally, when $0<q<1$ and $\frac{D(q)}{q-1}\in (k,k+1)$, $D'(q)$ exists if $T_i$, $i=1,\ldots,m$ satisfy the so-called $C(k+1)$ condition introduced in \cite{FaSl09}. This can be proved in a way similar to \cite[Proposition 1.2]{FeKa11}, or by simply using \cite[Theorem 5.5]{Fen11'}.
\end{rem}

\section{Equivalence of certain self-affine measures to the Lebesgue measure}
\label{S-4}

Our multifractal analysis will need the first part of the following Proposition~\ref{pro-4.1}, which deals with the comparison between the  Lebesgue measure and projections of certain ergodic measures on attractors of self-affine IFS with positive Lebesgue measure; we do not only consider  Bernoulli products measures because our main results extend to Gibbs measures (see Section~\ref{S-7}). The first case considered in Proposition~\ref{pro-4.1} is essentially a restatement of  a result obtained by Shmerkin in \cite[Proposition 22(3)]{Shm06}, while the second one is a nontrivial improvement of \cite[Proposition 22(3)]{Shm06}, in which only  the case $d\le 2$ was treated. In fact in Proposition 22 of \cite{Shm06} Shmerkin only considered self-affine measures, but he mentioned as a remark  that his results are valid for the class of ergodic measures we consider. Though the second case considered in Proposition~\ref{pro-4.1} will not be used in this paper, we think it is worth keeping it in this paper due to the importance of such results in the general ergodic theory of self-affine IFS, and also because the method differs from that used by Shmerkin, by avoiding to refer to general results on density bases. We will also use this approach to give an alternative proof of the first case of Proposition~\ref{pro-4.1} when $d\le 2$.

Let $\{S_i=T_i+a_i\}_{i=1}^m$ be an affine IFS on $\R^d$ with the attractor $F$. Assume that $\L^d(F)>0$. Let $\L^d_F$ denote the restriction of $\L^d$ on $F$, i.e.,
$\L^d_F(A)=\L^d(A\cap F)$ for any Borel set $A\subset \R^d$.

Let $\pi=\pi^\ba:\Sigma\to \R^d$ be defined as in \eqref{e-2.21}.  Let $\eta\in \M(\Sigma,\sigma)$ and  $\mu=\eta\circ \pi^{-1}$.  Say that $\L^d_F$ is {\it equivalent} to $\mu$ if for any Borel set $A\subset \R^d$,  $\L^d_F(A)=0$ if and only if $\mu(A)=0$.

\begin{pro}
\label{pro-4.1} Assume that one of the following conditions fulfills:
\begin{itemize}
\item[(i)] The $T_i$ are diagonal;
\item[(ii)]  $T_1=\ldots=T_m$.
\end{itemize}
 Assume that $\eta$ is ergodic satisfying
 $$
 \eta(B)>0\Longrightarrow \eta (iB)>0 \mbox{ for all }1\leq i\leq m
 $$
 for any Borel set $B\subset \Sigma$, where $iB:=\sigma^{-1}(B) \cap [i]$.
  Then $\mu$ is either singular to $\L^d_F$, or equivalent to $\L^d_F$.
\end{pro}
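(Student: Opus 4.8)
The plan is to establish a zero-one law type dichotomy by exploiting the self-affine structure and ergodicity. The key observation is that the measure $\mu = \eta\circ\pi^{-1}$ and the restricted Lebesgue measure $\L^d_F$ are related to the IFS action, and absolute continuity in one direction propagates through the cylinders. First I would set $\nu$ to be the Radon-Nikodym component: decompose $\mu = \mu_{ac} + \mu_{s}$ with respect to $\L^d_F$, where $\mu_{ac} \ll \L^d_F$ and $\mu_s \perp \L^d_F$. The goal is to show that either $\mu_{ac} = 0$ (so $\mu \perp \L^d_F$) or $\mu_s = 0$ together with the reverse absolute continuity $\L^d_F \ll \mu$ (so they are equivalent).

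The main mechanism is to push the decomposition through the IFS maps. Writing $\eta = \sum_{i=1}^m \eta([i])\, (\sigma_i)_*(\eta|_{[i]})/\eta([i])$ appropriately and using $\pi\circ\sigma_i^{-1} = S_i^{-1}\circ\pi$ on cylinders (more precisely $S_i\circ\pi = \pi\circ\tau_i$ where $\tau_i$ prepends the symbol $i$), one sees that $\mu$ restricted to $S_I(F)$ is, up to the affine change of coordinates $S_I$ and a scalar, governed by $\eta_I := $ the normalized push-forward of $\eta$ under the shift, which — by ergodicity and the hypothesis $\eta(B)>0 \Rightarrow \eta(iB)>0$ — is absolutely continuous with respect to $\eta$ and conversely. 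Since affine maps send Lebesgue null sets to Lebesgue null sets and vice versa, the property ``$\mu|_{S_I(F)}$ has a nontrivial absolutely continuous part with respect to $\L^d$'' is either true for all finite words $I$ or false for all of them, and this global alternative is exactly what ergodicity of $\eta$ (transported to $\mu$ via $\pi$, using that $\pi$ is essentially injective off a $\mu$-null set is not needed — we only need invariance properties) forces.

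More concretely, I would introduce the set $A = \{x \in F : \text{the local density of } \mu_{ac} \text{ w.r.t. } \L^d \text{ is positive at } x\}$ and show, using the Lebesgue density theorem together with the scaling behavior of $\mu$ and $\L^d$ under the $S_i$, that $S_i^{-1}(A)\cap F$ and $A$ differ by sets that are null for both measures; hence $\mu(A)$ and $\L^d_F(A)$ are each $S_i$-invariant in the appropriate sense, so by ergodicity $\mu(A) \in \{0, \mu(F)\}$. If $\mu(A) = 0$ then $\mu_{ac}=0$ and $\mu \perp \L^d_F$. If $\mu(A) = \mu(F)$, i.e. $\mu \ll \L^d_F$, I would then run the symmetric argument with the roles reversed: consider the Radon-Nikodym derivative $g = d\mu/d\L^d_F$ and the set $\{g > 0\}$, which is again quasi-invariant under the $S_i$ thanks to the hypothesis on $\eta$ (this is where $\eta(B)>0\Rightarrow\eta(iB)>0$ is essential — it guarantees that $\mu$ assigns positive mass to every cylinder $S_I(F)$, so $\L^d_F$ cannot ``escape'' into a region where $\mu$ vanishes), and conclude $\L^d_F(\{g=0\}) = 0$, giving $\L^d_F \ll \mu$ and hence equivalence.

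The hypotheses (i) the $T_i$ diagonal, or (ii) all $T_i$ equal, enter to control the geometry of how the cylinder sets $S_I(F)$ tile or overlap and to make the density/scaling arguments rigorous — in case (ii) one can further simplify since after conjugating by $T_1$ the problem reduces to a translation-parametrized overlapping structure à la Shmerkin. The hard part will be the density argument in the direction $\mu \ll \L^d_F \Rightarrow \L^d_F \ll \mu$: one must rule out the possibility that $\mu$ is absolutely continuous but degenerate (concentrated on a proper Lebesgue-positive subset of $F$), and this genuinely uses that $\mu$ charges every cylinder, combined with an ergodicity/quasi-invariance argument for the support. I expect that handling the higher-dimensional case $d \geq 3$ under hypothesis (ii) — which is stated as the new contribution beyond Shmerkin's $d \le 2$ — will require the more hands-on approach avoiding density bases that the authors advertise, presumably a careful Fubini-type slicing argument along the contracting directions shared by all the $T_i$.
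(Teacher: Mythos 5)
Your overall architecture is the right one and matches the paper's: first establish the dichotomy ``singular or absolutely continuous'' (your forward-invariance argument for the positive-density set of $\mu_{ac}$, combined with ergodicity of $\eta$ pulled back through $\pi$, is a workable variant of the paper's argument, which takes a Lebesgue-null set $A$ with $\mu(A)>0$ and observes that $\pi\bigl(\bigcup_n\sigma^{-n}\pi^{-1}(A)\bigr)$ is a countable union of affine images of $A$, hence Lebesgue-null but of full $\mu$-measure). You have also correctly located where the hypothesis $\eta(B)>0\Rightarrow\eta(iB)>0$ enters: it forces $\mu(A)=0\Rightarrow\mu(S_{i_1\cdots i_k}^{-1}(A))=0$, so that the bad set can be saturated into a set $\Lambda$ with $\mu(\Lambda)=0$ whose complement $E=F\setminus\Lambda$ satisfies $S_i(E)\subset E$ for all $i$.

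The genuine gap is in the step you describe as ``conclude $\L^d_F(\{g=0\})=0$'' from quasi-invariance of $\{g>0\}$. There is no ergodicity or quasi-invariance argument available here: $\L^d_F$ is not invariant, and not quasi-invariant in any dynamically useful sense, under the non-invertible contracting semigroup generated by the $S_i$, so forward-invariance of a set of positive Lebesgue measure does not by itself yield full Lebesgue measure. What is actually needed, and what the paper proves, is the purely geometric statement: if $E\subset F$ is Borel with $S_i(E)\subset E$ for all $i$ and $\L^d(E)>0$, then $\L^d_F(F\setminus E)=0$. The proof fixes a Lebesgue density point $x$ of $F$, covers $B_r(x)\cap F$ by the cylinders $S_I(\mathcal C)$ with $I$ in the stopping family $\A_{r,x}$, and uses a Besicovitch/Vitali-type covering lemma for these specific families -- axis-parallel rectangles of comparable diameter in case (i), translates of a single parallelepiped in case (ii) -- to extract boundedly many disjoint subfamilies whose dilates cover; since $\L^d(S_I(E))/\L^d(S_I(\mathcal C))=\L^d(E)/\L^d(\mathcal C)$ for affine $S_I$, this gives a uniform lower bound $\L^d(B_{2\sqrt d\,r}(x)\cap E)\ge c\,r^d$, and the Lebesgue density theorem then kills $F\setminus E$. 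This covering lemma is exactly where hypotheses (i) and (ii) are used, not merely to ``control the geometry'' in a soft sense; note in particular that case (ii) is the easy one (it works in every dimension because all cylinders are translates of one parallelepiped), whereas case (i) is only handled self-containedly for $d\le 2$ (for general $d$ the paper cites Shmerkin), so your expectation that the extra work lies in case (ii) for $d\ge 3$, to be handled by Fubini slicing, points in the wrong direction. Without supplying this covering/density argument your proof does not close.
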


Our approach to Proposition~\ref{pro-4.1} extends some ideas used in \cite{MS98}, where Mauldin and Simon \cite{MS98} established the first results of this kind for linear IFS and Bernoulli product measures  on $\R$.

First we introduce some notation. Suppose $R$ is a rectangle in $\R^d$ parallel to the axes, i.e. $R$ has the form
$$
R=\prod_{i=1}^d [x_i-a_i,x_i+a_i],\quad\text{where } a_i>0.
$$
For $t>0$, we denote
$$
t R= \prod_{i=1}^d [x_i-ta_i,x_i+ta_i].
$$
Also we denote
$$
\|R\|=\max_{1\le i\le d}a_i.
$$

\begin{lem}\label{lem-4.2}
Suppose $\{R_i\}_{i\in\mathcal F}$ is a countable  family of rectangles in $\R^1$ or $\R^2$ with edges parallel to the axes. Assume that
$\displaystyle\frac{\sup_j\|R_j\|}{\inf_j\|R_j \|}<\infty$.
Then there exists a partition $\{\mathcal{F}_1,\mathcal{F}_2\}$ of $\mathcal F$ such that for  $i=1,2$, there exists  $\widetilde{\mathcal F}_i\subset \mathcal F_i$ satisfying
that
\begin{eqnarray*}
&&R_j\ (j\in \widetilde{\mathcal F}_i)\text{ are disjoint, and }  \bigcup_{j\in \widetilde{\mathcal F}_i}M R_j\supset \bigcup_{j\in \mathcal {F}_i} R_j,
\end{eqnarray*}
where $M=\displaystyle 3\cdot \frac{\sup_j\|R_j\|}{\inf_j\|R_j \|}$.
\end{lem}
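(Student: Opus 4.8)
The plan is to prove this as a Vitali-type covering lemma, but with the twist that a single sub-family of disjoint rectangles need not suffice, so we split $\mathcal{F}$ into two parts. First I would reduce to the case where all the rectangles have comparable size: since $\sup_j\|R_j\|/\inf_j\|R_j\| < \infty$, after normalizing we may assume $\|R_j\| \in [1, C]$ for every $j$, with $C = \sup_j\|R_j\|/\inf_j\|R_j\|$. The idea is then to select a maximal disjoint sub-collection greedily (by, say, choosing rectangles in order of non-increasing $\|R_j\|$ and discarding those that meet a previously chosen one), but the obstacle specific to $\R^1$ and $\R^2$ is that a rectangle $R_j$ in the full family can be intersected by a chosen rectangle that is much ``thinner'' in one coordinate direction, so that the standard dilation $M R_k$ of the chosen rectangle $R_k$ fails to cover $R_j$ in that thin direction. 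This is exactly why dimension $\le 2$ is used, and why two sub-families are needed.

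The key construction I would carry out is to classify each rectangle $R_j$ by which coordinate direction realizes its ``width'' $\|R_j\| = \max_i a_i^{(j)}$: in $\R^2$, put $R_j$ into $\mathcal{F}_1$ if its first half-edge $a_1^{(j)}$ equals $\|R_j\|$ (the ``wide-in-$x$'' rectangles), and into $\mathcal{F}_2$ otherwise (so $a_2^{(j)} = \|R_j\|$, the ``wide-in-$y$'' rectangles); in $\R^1$ one simply takes $\mathcal{F}_2 = \emptyset$. Then within $\mathcal{F}_1$, I would run the greedy maximal-disjoint selection to obtain $\widetilde{\mathcal F}_1$, and separately within $\mathcal{F}_2$ to obtain $\widetilde{\mathcal F}_2$. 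The point of the classification is that if $R_j, R_k \in \mathcal{F}_1$ intersect and $\|R_k\| \ge \|R_j\|$, then in the $x$-direction the half-edges satisfy $a_1^{(k)} = \|R_k\| \ge \|R_j\| \ge a_1^{(j)}$ while in the $y$-direction $a_2^{(k)} \le \|R_k\|$ and $a_2^{(j)} \le \|R_j\| \le \|R_k\|$; since the rectangles meet, the center of $R_j$ lies within distance $a_1^{(k)}+a_1^{(j)} \le 2\|R_k\|$ of the center of $R_k$ in the $x$-coordinate and within $a_2^{(k)}+a_2^{(j)} \le 2\|R_k\|$ in the $y$-coordinate, and $R_j$ itself extends at most $\|R_j\| \le \|R_k\|$ beyond its center in each coordinate. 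Hence every point of $R_j$ lies within $3\|R_k\|$ of the center of $R_k$ in each coordinate, i.e. $R_j \subset 3\frac{\|R_k\|}{\|R_j\|}\, R_k$ is contained in $M R_k$ because $\|R_k\|/\|R_j\| \le C$ after normalization (and $M = 3C$). This gives $\bigcup_{j\in\widetilde{\mathcal F}_1} M R_j \supset \bigcup_{j\in\mathcal{F}_1} R_j$, and likewise for $\mathcal{F}_2$.

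I expect the main obstacle to be the bookkeeping in the greedy selection when $\mathcal{F}$ is countably infinite rather than finite: one cannot literally ``pick the largest rectangle first'' since a supremum need not be attained, so I would instead do a countable recursion choosing at stage $n$ a rectangle $R_j$ with $\|R_j\|$ within a factor $(1+2^{-n})$ of the current supremal value among the not-yet-discarded rectangles, or simply exhaust $\mathcal{F}$ in a fixed enumeration while always comparing sizes up to the uniform factor $C$ — the ratio bound $\|R_k\|/\|R_j\| \le C$ is all that the covering estimate above actually uses, so the precise order of selection is irrelevant as long as each discarded rectangle meets some selected rectangle of size at least $\|R_j\|/C$, which already yields $R_j \subset M R_k$ with $M = 3C$. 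Once this is set up, maximality of the selected sub-collection guarantees every rectangle of $\mathcal{F}_i$ either lies in $\widetilde{\mathcal F}_i$ or meets one, so the covering conclusion follows, and disjointness of $\{R_j\}_{j\in\widetilde{\mathcal F}_i}$ holds by construction.
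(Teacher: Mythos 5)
Your partition of $\F$ into $\F_1=\{j:a_1(R_j)=\|R_j\|\}$ and $\F_2$ is the same as the paper's, but your covering step has a genuine gap, and it occurs at exactly the point that makes this lemma delicate. The flaw is the deduction ``every point of $R_j$ lies within $3\|R_k\|$ of the center of $R_k$ in each coordinate, i.e.\ $R_j\subset 3\frac{\|R_k\|}{\|R_j\|}R_k\subset MR_k$.'' The dilate $tR_k$ has half-edges $t\,a_1(R_k)$ and $t\,a_2(R_k)$, and for $k\in\F_1$ the short half-edge $a_2(R_k)$ is not bounded below by anything: the hypothesis only controls the ratio of the quantities $\|R_j\|=\max_i a_i(R_j)$, i.e.\ the long sides. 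So lying within $3\|R_k\|$ of the center of $R_k$ in the $y$-coordinate does not put a point inside $MR_k$, whose $y$-extent is only $Ma_2(R_k)$, which can be far smaller than $3\|R_k\|$. Concretely, take $R_k=[-1,1]\times[-\epsilon,\epsilon]$ and $R_j=[-1,1]^2$: both lie in $\F_1$, both have $\|\cdot\|=1$ (so $M=3$), and they intersect, yet $MR_k=[-3,3]\times[-3\epsilon,3\epsilon]$ does not contain $R_j$. Since your greedy selection orders by $\|\cdot\|$, it cannot distinguish these two rectangles and may retain $R_k$ while discarding $R_j$, after which the covering conclusion fails. The bookkeeping issue you flag (suprema of $\|R_j\|$ not being attained) is not the real obstacle; the real obstacle is that selecting by the long side gives no control in the short direction, and your remark that ``the ratio bound $\|R_k\|/\|R_j\|\le C$ is all that the covering estimate actually uses'' is precisely what is false.

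The paper's proof repairs exactly this: inside $\F_1$ it runs the maximal-disjoint selection in layers ordered by the \emph{short} half-edge $a_2$. Setting $a=\sup_{j\in\F_1}a_2(R_j)$, one first chooses a maximal disjoint family among the rectangles with $a_2(R_j)\in(a/2,a]$; any rectangle in this top layer then meets a selected $R_{j_1}$ with $a_2(R_{j_1})>a/2\ge a_2(R_j)/2$, so the selected rectangle is, up to a factor $2$, at least as thick in the $y$-direction as the one it must cover, and the $y$-direction inclusion goes through (the $x$-direction is handled, as in your argument, by the bounded ratio of the $\|\cdot\|$'s). One then recurses on the remaining, thinner rectangles, taking the union of all the layers as $\widetilde\F_1$. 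Note that this works only because, once the long direction is fixed by the partition, there is a single remaining direction to order by; your argument would work verbatim in every dimension, whereas the lemma is stated (and used) only for $d\le 2$ --- a strong hint that a dimension-independent greedy selection cannot suffice.
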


\begin{proof}
We only treat the case $d=2$. For convenience, for each rectangle $R$ (with edges parallel to the axes), we use $a_i(R)$, $i=1,2$, to denote the length of the semi-axes of $R$ along the $x_i$
direction.

Partition $\mathcal F$ into
\begin{eqnarray*}
\mathcal{F}_1&=&\{j\in\mathcal{F}: a_1(R_j)=\|R_j\|\} \text{ and }  \\ \mathcal{F}_2&=&\mathcal{F}\setminus \mathcal{F}_1=\{j\in\mathcal{F}: a_1(R_j)<a_2(R_j)=\|R_j\|\}.
\end{eqnarray*}
Without loss of generality we prove the result for the case $i=1$. For $j\in\F_1$, denote $\F_1(j)= \{j'\in\F_1:R_j\cap R_{j'}\neq\emptyset\}$. Also denote $a=
\sup_{j\in\F_1}a_2(R_{j})$.

Choose $\F_1^1$ a maximal family in $\F_1$ such that the rectangles $R_j$, $j\in\F_1^1$ are disjoint, and for each $j\in\F_1^1$ we have $a/2< a_2(R_j)\le a$. By
construction, for each $j_1\in \F_1^1$ we have
$$
M R(j_1)\supset \bigcup_{j\in\F_1(j_1)}R_j,
$$
so
$$
\bigcup_{j_1\in \F_1^1}M R(j_1) \supset \bigcup_{j_1\in \F_1^1}\bigcup_{j\in\F_1(j_1)}R_j\supset \bigcup_{j\in\F_1:\; a/2<a_2(R_j)\le a} R_j,
$$
the last inclusion follows from the maximality of $\F_1^1$.

Suppose that for $k\ge 1$ we have built a subfamily $\F_1^k$ of $\F_1$ such that the rectangles $R_j$, $j\in \F_1^k$, are disjoint and
\begin{equation}\label{covering}
\bigcup_{j_k\in \F_1^k}M R(j_k) \supset   \bigcup_{j_k\in \F_1^k}\bigcup_{j\in\F_1(j_k)}R_j\supset \bigcup_{j\in\F_1: a/2^k<a_2(R_j)\le a} R_j.
\end{equation}
If there is no $j\in\F_1$ such that $a_2(R_j)\le a/2^{k}$ or $\bigcup_{j_k\in \F_1^k}\F_1(j_k)=\F_1$, we set $\F_1^{k+1}=\F_1^k$. Otherwise, let $\F''_1$ be a maximal subfamily
of $\F'_1=\F_1\setminus \bigcup_{j_k\in \F_1^k}\F_1(j_k)$ of disjoint rectangles $R_j$ for which $a'/2<a_2(R_j)\le a'$, where $a'=\sup_{j'\in \F'_1}a_2(R_{j'})\le a/2^{k}$.
Then setting $\F_1^{k+1}=\F_1^k\cup F_1''$ we have
$$
\bigcup_{j_k\in \F_1^{k+1}}M R(j_{k+1}) \supset   \bigcup_{j_{k+1}\in \F_1^{k+1}}\bigcup_{j\in\F_1(j_{k+1})}R_j\supset \bigcup_{j\in\F_1: a/2^{k+1}<a_2(R_j)\le a} R_j.
$$
This yields by induction a non-decreasing sequence of  subfamilies $\F_1^k$ of  $\F_1$ such that the $R_j$, $j\in \F_1^k$, are disjoint and satisfy \eqref{covering}.
Consequently $\widetilde\F_1=\bigcup_{k\ge 1}\F_1^k$ is suitable.
\end{proof}

\begin{lem}\label{lem-4.3}
Let $C$ be a cube  in $\R^d$. Let $\{T_j\}_{j\in \F}$ be a countable family of affine mappings from $\R^d$ to itself, with the same linear part $T$. Then there exists  $\widetilde \F\subset \F$ such that
\begin{eqnarray*}
&&T_j(C) \ (j\in\widetilde\F) \text{ are disjoint, and }\bigcup_{j\in \widetilde \F} T_j (2 C)\supset \bigcup_{j\in \F}T_j (C).
\end{eqnarray*}
\end{lem}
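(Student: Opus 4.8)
The plan is to reduce to translated copies of a fixed cube and then run a Vitali‑type greedy selection. Write $T_j=T+b_j$, so that each $T_j$ is affine with common linear part $T$. Since concentric dilation by a fixed factor commutes with affine maps and $T$ is non‑singular, applying the affine bijection $T^{-1}$ and a further affine normalization I may assume $T=\mathrm{id}$ and that $C$ is the standard cube; then $T_j(C)=C+v_j$ is a translate of $C$ and $T_j(2C)=2C+v_j$ is its concentric double, while disjointness of the sets and all the inclusions we need are preserved under these changes of variable.

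Next I would enumerate $\F=\{1,2,\dots\}$ and build $\widetilde\F$ greedily: having decided which of $1,\dots,n-1$ lie in $\widetilde\F$, put $n\in\widetilde\F$ if and only if $C+v_n$ is disjoint from $C+v_j$ for every already chosen $j$. By construction $\{T_j(C):j\in\widetilde\F\}$ is pairwise disjoint, and $\widetilde\F$ is maximal with this property. (Countability of $\F$ makes Zorn's lemma unnecessary here; for an uncountable family one would simply take a maximal disjoint subfamily.) The reason a single pass suffices here, in contrast with the two‑stage splitting by orientation and size used in Lemma~\ref{lem-4.2}, is that all the sets $T_j(C)$ are translates of the \emph{same} set $T(C)$, hence mutually congruent, so one fixed dilation factor absorbs every overlap.

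It then remains to verify the covering property. Fix $j_0\in\F$. If $j_0\in\widetilde\F$ there is nothing to do, since $C\subseteq 2C$ gives $T_{j_0}(C)\subseteq T_{j_0}(2C)$. Otherwise, by maximality of the selection there is $j\in\widetilde\F$ with $(C+v_{j_0})\cap(C+v_j)\neq\emptyset$; since $C+v_{j_0}$ and $C+v_j$ are congruent cubes that meet, $v_{j_0}$ lies within one side‑length of $v_j$ in the $\ell^\infty$ sense, from which a direct coordinate computation yields the key inclusion $T_{j_0}(C)=C+v_{j_0}\subseteq 2C+v_j=T_j(2C)$. Taking the union over $j_0\in\F$ gives $\bigcup_{j\in\F}T_j(C)\subseteq\bigcup_{j\in\widetilde\F}T_j(2C)$, which is the claim.

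I expect the one genuinely delicate point to be this last inclusion: one must pin down that the dilation factor $2$ really does suffice to swallow a congruent cube that merely touches the selected one (and, in the unnormalized picture, that $2C$ is read as the concentric dilate so that the estimate is affine‑invariant). Everything else — the reduction to translates and the greedy selection itself — is routine once one observes that congruence of the $T_j(C)$ eliminates the size bookkeeping that is unavoidable in Lemma~\ref{lem-4.2}.
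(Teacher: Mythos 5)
Your overall route is the same as the paper's: take a maximal subfamily $\widetilde\F$ such that the sets $T_j(C)$, $j\in\widetilde\F$, are pairwise disjoint, and then swallow every non-selected $T_{j_0}(C)$ by a fixed concentric dilate of a selected $T_j(C)$ that it meets. The greedy selection and the reduction to translates of a standard cube are fine.

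The step that fails is exactly the one you flagged as delicate: the claimed ``direct coordinate computation'' giving $C+v_{j_0}\subseteq 2C+v_j$ whenever the two translated cubes meet. With $C=[0,1]^d$ and $2C$ the \emph{concentric} double (the paper's convention for $tR$), meeting only gives $\|v_{j_0}-v_j\|_\infty\le 1$, whereas the inclusion $C+v_{j_0}\subseteq 2C+v_j$ forces $\|v_{j_0}-v_j\|_\infty\le 1/2$ (test the two endpoints in each coordinate). Concretely, in $d=1$ take $v_j=0$, $v_{j_0}=1$: the cubes $[0,1]$ and $[1,2]$ meet, but $[1,2]\not\subseteq[-1/2,3/2]$. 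The correct constant for this argument is $3$: writing $P=T(C)$ and centring $P$ at the origin, intersection of $P+v_{j_0}$ and $P+v_j$ gives $v_{j_0}-v_j\in P-P=2P$, hence $P+v_{j_0}\subseteq v_j+3P$, i.e. $T_{j_0}(C)\subseteq T_j(3C)$, and the one-dimensional example shows $3$ cannot be lowered. So your argument proves the lemma only with $2C$ replaced by $3C$; as it happens, the paper's own one-line justification asserts the same factor-$2$ inclusion and has the same defect, so the statement itself should be read with $3C$ (equivalently $M=3$ rather than $M=2$ in case (ii) of Proposition 4.1, where any fixed constant suffices). As written, though, the factor-$2$ containment you appeal to is false, so the final covering step of your proof does not close.
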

\begin{proof}
It is easy to see that if $T_i(C)\cap T_j(C)\neq\emptyset$ then $T_i(2C)\supset T_j(C)$. Taking $\widetilde\F$, a maximal subfamily of
$\F$ such that the parallelepipeds $T_i(C)$, $i\in \widetilde\F$, are pairwise disjoint, we are done.
\end{proof}

\begin{proof}[{\bf Proof of Proposition~\ref{pro-4.1}}] (Case $\mathrm{(i)}$ with $d\le 2$ and case $\mathrm{(ii)}$ in general). We first show that  $\mu$ is either singular or absolutely continuous with respect to $\L^d_F$ (this actually holds for all IFS rather than affine IFS).  This fact is  known  when $\eta$ is a Bernoulli product measure \cite{JeWi35,Hut81}.  Now we consider the general case that  $\eta$ is an ergodic measure. Assume that $\mu$ is not absolutely continuous with respect to $\L^d_F$. Then there is
a Borel set $A\subset F$ such that $\L^d(A)=0$ but $\mu(A)>0$.  Define $W=\pi^{-1}(A)$. Then $\eta(W)=\mu(A)>0$. Since $\eta$ is ergodic,
we have
$
\eta\left(\bigcup_{n=1}^\infty\sigma^{-n}W\right)=1
$ (cf. \cite[Theorem 1.5(iii)]{Wal82}).
Denote $\widetilde{W}:=\bigcup_{n=1}^\infty\sigma^{-n}W$. Then
$$\pi(\widetilde{W})=\bigcup_{n=1}^\infty\bigcup_{1\leq i_1,\ldots,i_n\leq m} S_{i_1\ldots i_n}(A).
$$
Since $S_1,\ldots, S_m$ are contractive, we have $\L^d(S_{i_1\ldots i_n}(A))\leq \L^d(A)=0$, and thus $\L^d(\pi(\widetilde{W}))=0$. However, $\mu(\pi(\widetilde{W}))=\eta\circ \pi^{-1}(\pi(\widetilde{W}))\geq \eta(\widetilde{W})=1$. Hence $\mu$ is singular with respect to $\L^d_F$. Up to now we have shown the claim that $\mu$ is either singular or absolutely continuous with respect to $\L^d_F$.

Assume that the conclusion of Proposition~\ref{pro-4.1} does not hold. Then $\mu$ is absolutely continuous with respect
to $\L^d_F$, but $\L^d_F$ is not absolutely continuous with respect to $\mu$. Hence there exists a Borel set $A\subset F$ with $\L^d_F(A)>0$, but $\mu(A)=0$.

Note that $\mu$ satisfies the following relation for all $k\ge 1$:
$$
\mu(A)=\eta\circ \pi^{-1}(A)=\sum_{1\le i_1,i_2,\ldots, i_k\le m}\eta ([i_1\cdots i_k]\cap \sigma^{-k}\pi^{-1}(S_{i_1\ldots i_k}^{-1}(A)),
$$
from which we obtain that for any $1\le i_1,i_2,\ldots, i_k\le m$,  $$\eta ([i_1\cdots i_k]\cap \sigma^{-k}\pi^{-1}(S_{i_1\ldots i_k}^{-1}(A))=0$$ and
thus $\eta(\pi^{-1}(S_{i_1\ldots i_k}^{-1}(A)))=0$ (by the assumption on $\eta$). Hence
$$
\mu(S_{i_1\ldots i_k}^{-1}(A))=0,\quad
$$
Denote
$$
\Lambda=\Big (\bigcup_{k=1}^\infty\bigcup_{1\le i_1,i_2,\ldots, i_k\le m}S_{i_1\ldots i_k}^{-1}(A)\Big )\cup A.
$$
Then $\mu(\Lambda)=0$, but $\L^d_F(\Lambda)>0$.

In the following, we will show that $\L^d_F(F\setminus \Lambda)=0$, which leads to $\mu(F\setminus \Lambda)=0$ (since $\mu\ll\L^d_F$), and thus
$\mu(F)=\mu(\Lambda)+\mu(F\setminus \Lambda)=0$, a contradiction.  Denote $\Lambda^c=F\setminus \Lambda$. Then $S_i(\Lambda^c)\subset\Lambda^c$ for all $1\le i\le m$.

Assume on the contrary that $\L^d_F(\Lambda^c)>0$. 

Now we prove the following general fact: if a Borel subset $E$ of $F$ is such that $S_i(E)\subset E$ for all $1\le i\le m$ and $\L^d(E)>0$, then $\L_F^d (F\setminus E)=0$. In the case of $E=\Lambda^c$, this yields $F\setminus \Lambda$  has zero $\L^d$ measure, i.e. $\Lambda$ has zero
$\L^d_F$ measure, contradicting the assumption $\L^d_F(\Lambda)>0$.

For $0<r<1$, define
$$
\A_r=\Big\{i_1i_2\ldots i_k\in\Sigma^*: \|S_{i_1\ldots i_k}\|\le r,\ \|S_{i_1\ldots i_{k-1}}\|>r\Big\},
$$
where $\Sigma^*=\bigcup_{k=0}^\infty\{1,\ldots,m\}^k$.

Without loss of generality, assume that $F$ is contained in the unit cube $\mathcal C=[0,1]^d$ in $\R^d$.

Let $x\in F$. Denote
$$
\A_{r,x}=\{I\in \A_r: B_r(x)\cap S_I(F)\neq\emptyset\}.
$$
Then
$$
B_{2\sqrt{d}r}(x)\supset \bigcup_{I\in\A_{r,x}}S_I(\mathcal C).
$$
Suppose that the assumptions of Proposition~\ref{pro-4.1} are fulfilled. Then by Lemmas~\ref{lem-4.2} (applied in the case $\mathrm{(i)}$ and when $d\le  2$) and~\ref{lem-4.3} (applied in the case $\mathrm{(ii)}$), there exists a constant $M>0$ ($M=3\lambda^{-1}$, where $\lambda$ is the smallest eigenvalue among those of $T_1,\ldots T_m$ in  case $\mathrm{(i)}$, $M=2$ in case  $\mathrm{(ii)}$), a partition
$\{\A_{r,x}^1, A_{r,x}^2\}$ of $\A_{r,x}$, and for $i=1,2$, a subfamily  $\widetilde \A_{r,x}^i$ of $ \A_{r,x}^i$ such that
\begin{eqnarray*}
&&S_I(\mathcal C),\ I\in \widetilde\A_{r,x}^i, \text{ are disjoint and} \\ &&\bigcup_{I\in\widetilde \A_{r,x}^i}MS_I(\mathcal C)\supset \bigcup_{I\in \A_{r,x}^i}S_I(\mathcal C).
\end{eqnarray*}
Therefore,
$$
\sum_{I\in\widetilde \A_{r,x}^i} \L^d(S_I(\mathcal C))\ge \frac{1}{M^d}\L^d\Big (\bigcup_{I\in \A_{r,x}^i}S_I(\mathcal C)\Big),
$$
and (the sets $S_I(E)$, $I\in \widetilde \A_{r,x}^i$, are necessarily pairwise disjoint)
\begin{eqnarray*}
 \L^d\Big (\bigcup_{I\in\A_{r,x}}S_I(E)\Big )\ge \sum_{I\in\widetilde \A_{r,x}^i} \L^d(S_I(E))&\ge &\frac{\L^d(E)}{\L^d(\mathcal C)}\sum_{I\in\widetilde \A_{r,x}^i} \L^d(S_I(\mathcal C))\\ &\ge
&\frac{\L^d_F(E)}{M^d}\L^d\Big (\bigcup_{I\in \A_{r,x}^i}S_I(\mathcal C)\Big).
\end{eqnarray*}
Denote $\widetilde c=\displaystyle \frac{\L^d_F(E)}{M^d}$. Summing the above inequality over $i\in\{1,2\}$ and using the subadditivity of
$\L^d$  we get
\begin{equation}
\label{e-4-1}
2\L^d\Big (\bigcup_{I\in \A_{r,x}}S_I(E)\Big)\ge \widetilde c\, \L^d\Big (\bigcup_{I\in \A_{r,x}}S_I(\mathcal C)\Big)\ge \widetilde c\, \L^d\Big ( \bigcup_{I\in
\A_{r,x}}S_I(F)\Big).
\end{equation}
If $x$ is a Lebesgue density point of $F$, then when $r$ is sufficiently small,
$$
\L^d(B_r(x)\cap F)\ge \frac{1}{2}r^d,
$$
hence
\begin{equation}
\label{e-4-2}
\L^d\Big ( \bigcup_{I\in \A_{r,x}}S_I(F)\Big)\ge \L^d\Big (B_r(x)\cap \bigcup_{I\in \A_{r,x}}S_I(F)\Big)\ge \frac{1}{2}r^d.
\end{equation}
Thus
\begin{eqnarray*}
\L^d\Big (B_{2\sqrt{d}r}(x)\cap E\Big )&\ge& \L^d\Big (B_{2\sqrt{d}r}(x)\cap  \bigcup_{I\in \A_{r,x}}S_I(E)\Big )\ge \L^d\Big ( \bigcup_{I\in \A_{r,x}}S_I(E)\Big
)\\
&\ge& \frac{ \widetilde c}{2}\, \L^d\Big ( \bigcup_{I\in
\A_{r,x}}S_I(F)\Big)\qquad (\mbox{ by \ref{e-4-1}})\\
&\ge & \frac{\widetilde c}{4}r^d. \qquad (\mbox{ by \ref{e-4-2}})
\end{eqnarray*}
Consequently, every Lebesgue point of $F$ is a point of density in $E$. This implies that $F\setminus E$ has zero $\L^d$ measure.
\end{proof}

\section{The proof of Theorem \ref{thm-1.3}}
\label{S-5}

First we consider the most general case that $T_1$, \ldots, $T_m$ are non-singular linear mappings from $\R^d$ to $\R^d$ satisfying $\|T_i\|<1/2$ for $1\leq i\leq m$.

The following lemma was proved by Falconer (see  \cite[Theorem 6.2 (a)]{Fal99}).

\begin{lem}[Theorem 6.2 (a) of \cite{Fal99}]
\label{lem-5.1} Let $q>0$, $q\neq 1$. For all $\ba\in \R^{md}$, we have $\tau(\mu^\ba, q)/(q-1)\leq \tau(q)/(q-1)$.
\end{lem}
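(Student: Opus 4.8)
The plan is to clear the signed factor $q-1$ and prove the equivalent statement: $\tau(\mu^\ba,q)\le \tau(q)$ when $q>1$, and $\tau(\mu^\ba,q)\ge\tau(q)$ when $0<q<1$. By \eqref{e-1.2} we have $\tau(q)/(q-1)=\min\{D(q)/(q-1),d\}$, so it suffices to prove the two bounds $\tau(\mu^\ba,q)/(q-1)\le d$ and $\tau(\mu^\ba,q)/(q-1)\le D(q)/(q-1)$ separately and take the minimum. The first is elementary and uses only that $\mu^\ba$ is a probability measure carried by a bounded subset of $\R^d$: at scale $r$ the support meets at most $C_dr^{-d}$ dyadic cubes of side $r$, which together have mass $1$, so by the power mean inequality $\sum_Q\mu^\ba(Q)^q$ is $\gtrsim r^{d(q-1)}$ if $q>1$ and $\lesssim r^{d(q-1)}$ if $q<1$; replacing cubes by disjoint balls in the usual way and taking $\liminf$ of $\log(\cdot)/\log r$ gives the claim. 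Consequently we may assume $D(q)/(q-1)<d$ and focus on proving $\tau(\mu^\ba,q)/(q-1)\le D(q)/(q-1)$.

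For the core estimate I would use cut sets in the symbolic space adapted to the singular value function. Given $s$ near $D(q)/(q-1)$ and $\rho>0$, let $\Gamma(\rho,s)=\{I\in\Sigma^*:\ \phi^s(T_I)\le\rho^s<\phi^s(T_{I^-})\}$, where $I^-$ is $I$ with its last symbol deleted; since $\phi^s$ is sub-multiplicative and $\phi^s(T_i)<1$ for each $i$, this is a finite complete prefix code, and the reverse singular value inequalities $\alpha_j(AB)\ge\alpha_j(A)\,\sigma_{\min}(B)$ give $c_0^s\rho^s<\phi^s(T_I)\le\rho^s$ on $\Gamma(\rho,s)$, with $c_0=\min_i\sigma_{\min}(T_i)>0$. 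Using the self-affine decomposition $\mu^\ba=\sum_{I\in\Gamma(\rho,s)}p_I\,\mu^\ba\circ S_I^{-1}$ together with the trivial bound $\mu^\ba(S_I(F))\ge p_I$, I would estimate the $L^q$-sum over disjoint balls: for $0<q<1$ bounding $\mu^\ba(B_r(x))\le\sum p_I$ over the $I\in\Gamma(\rho,s)$ with $S_I(F)\cap B_r(x)\ne\emptyset$, applying sub-additivity of $t\mapsto t^q$, and covering each ellipsoid $S_I(F)$ by a number of balls controlled by $\phi^s(T_I)$; for $q>1$ arguing dually, attaching to each $I$ a ball on which $\mu^\ba$ has mass $\ge p_I$ and passing to a disjoint subfamily by a Besicovitch-type lemma. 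In either case the output is compared with $\sum_{I\in\Gamma(\rho,s)}\phi^s(T_I)^{1-q}p_I^q$, which by the two-sided control of $\phi^s$ on the cut set is, up to the factor $\rho^{(1-q)s}$, comparable with $\sum_{I\in\Gamma(\rho,s)}p_I^q$, and which (since each word lies in a single generation) is dominated by $\sum_{k\ge1}\sum_{I\in\Sigma_k}\phi^s(T_I)^{1-q}p_I^q$; the latter is finite or infinite according to which side of $D(q)/(q-1)$ the exponent $s$ lies, by \eqref{e-1.1}, equivalently by the pressure identity $P(\sigma,G_q)=0$ of Section~\ref{S-3}. Letting $s\to D(q)/(q-1)$ and taking $\liminf$ yields the bound.

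The main obstacle is the anisotropy of the linear parts $T_i$: the cylinders $S_I(F)$ with $I$ in a fixed generation — and even those in a single cut set $\Gamma(\rho,s)$ — are ellipsoids of very different shapes and diameters, so the $L^q$-sum at a fixed Euclidean scale cannot be read off from one cut. The delicate point is the bookkeeping matching the fixed scale $r$ with this multiscale family: decomposing the relevant cylinders (or the balls extracted from them) into dyadic scale classes, bounding the number of balls of a given radius needed to cover each ellipsoid by the corresponding value of $\phi^s$, and summing over classes without degrading the exponent. This is exactly where the singular value function is essential — the cruder cut $\{I:\|T_I\|\le r<\|T_{I^-}\|\}$ only controls the largest semi-axis and produces a strictly weaker exponent as soon as $D(q)/(q-1)>1$ — and it is also where, in the diagonal case, one naturally passes to ``approximate squares'' fitted to the product structure of the maps.
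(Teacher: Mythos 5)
You should note at the outset that the paper does not prove this lemma: it is quoted verbatim from Falconer (\cite{Fal99}, Theorem 6.2(a)), so the comparison is with the covering argument behind that result. Your reduction is fine (the elementary bound $\tau(\mu^\ba,q)/(q-1)\le d$, plus the core bound by $D(q)/(q-1)$ when the latter is $<d$), and the general frame --- a stopping set adapted to singular values, the decomposition $\mu^\ba=\sum_{I\in\Gamma}p_I\,\mu^\ba\circ S_I^{-1}$, and the convergence criterion in \eqref{e-1.1} --- is the right one. But the core estimate is exactly what is left undone, and the two mechanisms you name would not give the claimed exponent. For $0<q<1$: with your stopping set $\{I:\phi^s(T_I)\le\rho^s<\phi^s(T_{I^-})\}$, $\rho\approx r$, the number $N_I$ of disjoint $r$-balls meeting $S_I(F)$ is \emph{not} controlled by $\phi^s(T_I)$ (take $d=2$, $s\in(1,2)$, $\alpha_1(T_I)\approx r^{1/2}$ with $\alpha_2(T_I)$ adjusted so that $\phi^s(T_I)\approx r^s$: then $N_I\approx r^{-1/2}$ while $r^{-s}\phi^s(T_I)\approx 1$). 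And even with the correct stopping rule --- stop when $\alpha_{k+1}(T_I)$ first drops below $r$, $k=\lfloor s\rfloor$, which does give $N_I\le C\,r^{-s}\phi^s(T_I)$ --- your accounting ``$\mu^\ba(B)^q\le\sum_{I\text{ meeting }B}p_I^q$, then multiply by the multiplicity $N_I$'' yields $\sum_I p_I^q N_I$, which charges each of the $N_I$ balls the full $p_I^q$ although the piece $\mu_I:=p_I\,\mu^\ba\circ S_I^{-1}$ gives most of them only a small fraction of $p_I$; since on that cut $\phi^s(T_I)$ exceeds $r^s$ by a positive power of $1/r$ as soon as the cylinders are elongated, the resulting exponent is strictly worse than $(q-1)s$ whenever $k\ge1$ --- precisely the regime the lemma is about. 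The missing step is not bookkeeping but a different inequality: keep the pieces $\mu_I$ and apply the power-mean (H\"older) inequality to each piece over the at most $N_I$ disjoint balls (or $r$-mesh cubes) it charges, $\sum_j\mu_I(B_j)^q\le N_I^{1-q}p_I^q$; combined with $N_I\le Cr^{-s}\phi^s(T_I)$ this gives $\sum_j\mu^\ba(B_j)^q\le C'r^{(q-1)s}\sum_{n\ge1}\sum_{I\in\Sigma_n}\phi^s(T_I)^{1-q}p_I^q$, hence $\tau(\mu^\ba,q)\ge (q-1)s$ for every non-integral $s>D(q)/(q-1)$, which is the sharp bound.

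For $q>1$ the gap is more serious. The only balls on which you can guarantee mass at least $p_I$ are those containing $S_I(F)$, of radius comparable to $\alpha_1(T_I)$, which on any stopping set adapted to $\phi^s$ (or to $\alpha_{k+1}$) is typically much larger than $r$; at scale $r$ there is no a priori lower bound for the mass of a single ball inside the thin ellipsoid $S_I(F)$ without information on $\mu^\ba$ at intermediate scales, which is what one is trying to prove. So the ``mass $\ge p_I$ ball plus Besicovitch'' scheme can only reproduce the crude exponent with $\phi^s$ replaced by $\alpha_1^s$, and fails as soon as $D(q)/(q-1)>1$. A working argument runs in the direction opposite to the $q<1$ case: super-additivity of $t\mapsto t^q$ and Jensen over the $r$-mesh cubes charged by each $\mu_I$ give $\sum_C\mu^\ba(C)^q\ge\sum_{I\in\Gamma_r}N_I^{1-q}p_I^q\ge c\,r^{(q-1)s}\sum_{I\in\Gamma_r}\phi^s(T_I)^{1-q}p_I^q$, and one must then bound this cut-set sum away from zero when $s>D(q)/(q-1)$, using that the weight $I\mapsto \phi^s(T_I)^{1-q}p_I^q$ is supermultiplicative (as $1-q<0$ and $\phi^s$ is submultiplicative) and has positive pressure in that range. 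Both the Jensen step and this cut-set lower bound are absent from your sketch, so as written the proposal does not establish the lemma on either side of $q=1$.
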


\begin{de}
For any Borel probability measure $\xi$ on $\R^d$ and $z\in \mbox{supp}(\xi)$, the local upper and lower dimensions of $\xi$ at $z$ are defined respectively by
$$\overline{d}(\xi,z):=\limsup_{r\to 0} \frac{\log \xi(B_r(z))}{\log r},\quad \underline{d}(\xi,z):=\liminf_{r\to 0} \frac{\log \xi(B_r(z))}{\log r}.$$
If $\overline{d}(\xi,z)=\underline{d}(\xi,z)$, we use $d(\xi,z)$ to denote the common value, and call it the local dimension of $\xi$ at $z$.
\end{de}

\begin{lem}
\label{lem-5.2} For any $\beta\in \R$ and $q>0$,
$$
\dim_H \{z\in \R^d:\; \underline{d}(\mu^\ba,z)\leq \beta\} \leq \beta q-\tau(\mu^\ba,q),
$$
where  we take the convention $\dim_H\emptyset =-\infty$.
\end{lem}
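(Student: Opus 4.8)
The inequality to prove is the standard upper bound for the dimension of the lower-dimension level (sub)set in terms of the $L^q$-spectrum, and the natural route is a covering argument combined with the very definition of $\tau(\mu^\ba,q)$. Fix $q>0$ and $\beta\in\R$, and suppose the set $X_\beta:=\{z\in\R^d:\ \underline d(\mu^\ba,z)\le\beta\}$ is non-empty (otherwise there is nothing to prove, by the convention $\dim_H\emptyset=-\infty$). The plan is to fix $s>\beta q-\tau(\mu^\ba,q)$ and show $\mathcal H^s(X_\beta)=0$, which gives $\dim_H X_\beta\le\beta q-\tau(\mu^\ba,q)$ after letting $s$ decrease to that value.

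\textbf{Key steps.} First I would reduce to the ``dyadic-scale'' version: since $\underline d(\mu^\ba,z)\le\beta$ means $\liminf_{r\to0}\frac{\log\mu^\ba(B_r(z))}{\log r}\le\beta$, for every $z\in X_\beta$ and every $\varepsilon>0$ there are arbitrarily small $r>0$ with $\mu^\ba(B_r(z))\ge r^{\beta+\varepsilon}$. For a fixed small scale $r$, let $X_\beta(r,\varepsilon)$ be the set of $z\in X_\beta$ admitting such an $r'\in(0,r)$; then $X_\beta=\bigcap_{n}\bigcup_{\,0<r<1/n} X_\beta(r,\varepsilon)$ in an appropriate sense, so it suffices to bound $\mathcal H^s$-covers of each $X_\beta(r,\varepsilon)$ uniformly. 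Second, the covering step: given such a scale, use a Vitali-type / $5r$-covering argument to extract from the balls $\{B_{r'}(z):z\in X_\beta(r,\varepsilon)\}$ a disjoint subfamily $\{B_{r_j}(z_j)\}_j$ (with $r_j<r$, $z_j\in\mathrm{supp}(\mu^\ba)$ since $\mu^\ba(B_{r_j}(z_j))>0$) such that $\{5B_{r_j}(z_j)\}_j$ covers $X_\beta(r,\varepsilon)$. Grouping the $r_j$ into dyadic blocks $r_j\in[2^{-k-1},2^{-k})$, within each block the centers are centers of disjoint balls of comparable radius, so the defining supremum in $\tau(\mu^\ba,q)$ applies: $\sum_{j:\,r_j\sim2^{-k}}\mu^\ba(B_{r_j}(z_j))^q\le C\cdot 2^{-k(\tau(\mu^\ba,q)-\delta)}$ for $k$ large, for any fixed $\delta>0$ (here one uses the $\liminf$ in the definition of $\tau$, valid for all small scales after possibly shrinking $r$, plus a bounded-overlap argument to pass from arbitrary disjoint balls of radius $r'\in[2^{-k-1},2^{-k})$ to radius exactly $2^{-k}$). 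Third, the estimate: since each contributing ball satisfies $\mu^\ba(B_{r_j}(z_j))\ge r_j^{\beta+\varepsilon}$, we get $\sum_{j:\,r_j\sim2^{-k}}r_j^{(\beta+\varepsilon)q}\le C\,2^{-k(\tau(\mu^\ba,q)-\delta)}$, hence $\#\{j:\ r_j\sim2^{-k}\}\cdot 2^{-k(\beta+\varepsilon)q}\lesssim 2^{-k(\tau(\mu^\ba,q)-\delta)}$ and therefore, using $\mathrm{diam}(5B_{r_j}(z_j))\lesssim 2^{-k}$,
\[
\sum_{j:\,r_j\sim2^{-k}}\mathrm{diam}(5B_{r_j}(z_j))^s\ \lesssim\ 2^{-ks}\cdot 2^{k((\beta+\varepsilon)q-\tau(\mu^\ba,q)+\delta)}\ =\ 2^{-k(s-(\beta+\varepsilon)q+\tau(\mu^\ba,q)-\delta)}.
\]
If $\varepsilon,\delta$ are chosen small enough that $s-(\beta+\varepsilon)q+\tau(\mu^\ba,q)-\delta>0$, summing over $k\ge k_0$ gives a geometric tail that tends to $0$ as $k_0\to\infty$, i.e.\ as $r\to0$. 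Hence $\mathcal H^s(X_\beta(r,\varepsilon))$ can be made arbitrarily small, so $\mathcal H^s(X_\beta)=0$; letting $s\downarrow\beta q-\tau(\mu^\ba,q)$ and then $\varepsilon\downarrow0$ finishes the proof. (When $\beta q-\tau(\mu^\ba,q)<0$ the same argument with $s=0^+$ shows $X_\beta$ is contained in a countable-or-empty set pattern forcing $\dim_H X_\beta\le 0$; combined with the convention it is consistent with the stated bound — note $q>0$ is used here so that raising to the $q$-th power is monotone.)

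\textbf{Main obstacle.} The delicate point is the passage from the defining quantity of $\tau(\mu^\ba,q)$ — a supremum over families of \emph{disjoint} balls of \emph{exactly} radius $r$ — to a bound on $\sum_j\mu^\ba(B_{r_j}(z_j))^q$ for the $r_j$ only \emph{comparable} to $2^{-k}$ and obtained from a Vitali selection (so only the $5r_j$-enlargements, not the $r_j$ themselves, cover). Handling this requires: (a) a bounded-multiplicity argument showing a disjoint family of balls with radii in $[2^{-k-1},2^{-k})$ can be split into boundedly many subfamilies each consisting of balls contained in disjoint balls of radius $2^{-k}$ (or: each $B_{2^{-k}}(z_j)$ meets at most a bounded number of the others, a doubling-of-$\R^d$ fact), and (b) care with the order of quantifiers: the $\liminf$ defining $\tau$ only controls scales below some threshold, so one must first restrict to $r$ small, then run the covering argument only at scales below that threshold. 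I expect everything else to be routine bookkeeping; this quantifier/enlargement juggling is where a careful write-up spends its effort. (One could alternatively invoke Falconer's formulation of the $L^q$-spectrum via $r$-mesh cubes, which sidesteps the Vitali step entirely and may be the cleaner route in this paper's setting.)
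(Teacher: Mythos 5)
Your covering/box-counting argument is correct and is exactly what the paper has in mind: the paper gives no written proof of Lemma~\ref{lem-5.2}, remarking only that it holds for any compactly supported Borel probability measure ``by a simple box-counting argument'' and citing Olsen's Proposition~2.5(iv), and your write-up (including the dyadic regrouping, the bounded-overlap passage to balls of radius exactly $2^{-k}$, and the use of the $\liminf$ to get a bound valid at \emph{all} small scales) is the standard realization of that argument. The one small slip is that $\mu^\ba(B_{r_j}(z_j))>0$ does not put $z_j$ in $\mathrm{supp}(\mu^\ba)$; one replaces each $z_j$ by a nearby support point and doubles the radius, a constant your bounded-overlap step already absorbs.
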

\begin{proof}
The lemma actually holds for any compactly supported Borel probability measure on $\R^d$. It can be proved by using a simple box-counting argument. For details, see e.g., Proposition
2.5(iv) in \cite{Ols95}.
\end{proof}

\begin{lem}
\label{lem-5.3} Let $\ba\in \R^{md}$. For any Borel set $A\subset \R^d$ and any $i_1,\ldots, i_n\in \{1,\ldots,m\}$,
$$
\mu^\ba(A)\geq p_{i_1}\ldots p_{i_n} \mu^\ba\left(\left(S_{i_1}\circ \ldots \circ S_{i_n}\right)^{-1}(A)\right).
$$
\end{lem}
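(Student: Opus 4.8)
The plan is to exploit the self-affine relation $\mu^\ba=\sum_{i=1}^m p_i\,\mu^\ba\circ S_i^{-1}$ together with a straightforward induction on $n$. The key observation is that for a single step, the defining equation gives, for any Borel set $A$,
$$
\mu^\ba(A)=\sum_{i=1}^m p_i\,\mu^\ba\bigl(S_i^{-1}(A)\bigr)\ge p_{i_1}\,\mu^\ba\bigl(S_{i_1}^{-1}(A)\bigr),
$$
simply because every term in the sum is nonnegative. This is the base case $n=1$.

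For the inductive step, I would suppose the inequality holds for $n-1$, namely that for any Borel set $B$,
$$
\mu^\ba(B)\ge p_{i_1}\cdots p_{i_{n-1}}\,\mu^\ba\bigl((S_{i_1}\circ\cdots\circ S_{i_{n-1}})^{-1}(B)\bigr),
$$
and then apply it with $B=S_{i_n}^{-1}(A)$ — wait, more cleanly, apply the one-step inequality first to peel off $S_{i_1}$, then the induction hypothesis (or, symmetrically, peel off $S_{i_n}$ last). Concretely: by the $n=1$ case applied to $A$ with index $i_1$, $\mu^\ba(A)\ge p_{i_1}\mu^\ba(S_{i_1}^{-1}(A))$; then by the induction hypothesis applied to the set $S_{i_1}^{-1}(A)$ with the string $i_2\ldots i_n$,
$$
\mu^\ba\bigl(S_{i_1}^{-1}(A)\bigr)\ge p_{i_2}\cdots p_{i_n}\,\mu^\ba\bigl((S_{i_2}\circ\cdots\circ S_{i_n})^{-1}S_{i_1}^{-1}(A)\bigr)
= p_{i_2}\cdots p_{i_n}\,\mu^\ba\bigl((S_{i_1}\circ\cdots\circ S_{i_n})^{-1}(A)\bigr),
$$
using the identity $(S_{i_2}\circ\cdots\circ S_{i_n})^{-1}\circ S_{i_1}^{-1}=(S_{i_1}\circ S_{i_2}\circ\cdots\circ S_{i_n})^{-1}$ for the composition of invertible maps. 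Multiplying the two displayed inequalities gives the claim for $n$.

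There is essentially no obstacle here: the only things to be careful about are that $S_i$ is invertible (which holds since $T_i$ is non-singular, so $S_i=T_i+a_i$ is a bijection of $\R^d$, and preimages under the stated composition are well defined as ordinary set preimages), that measurability is preserved (preimages of Borel sets under affine maps are Borel), and that the order of composition matches the order in which indices are stripped — a trivial bookkeeping point that the displayed identity above settles. One could alternatively phrase the whole argument in one line by iterating the self-affine identity $n$ times and discarding all but one of the resulting $m^n$ nonnegative terms; I would present the clean induction since it makes the index bookkeeping transparent.
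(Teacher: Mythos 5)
Your proof is correct and is essentially the paper's argument: the paper simply iterates the relation $\mu^\ba=\sum_{i=1}^m p_i\,\mu^\ba\circ S_i^{-1}$ $n$ times and drops all but one nonnegative term, which is exactly the one-line alternative you mention at the end; your induction is just a more explicit packaging of the same step.
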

\begin{proof}
Iterating the self-similar relation $\mu^\ba=\sum_{i=1}^m p_i \mu^\ba\circ S_i^{-1}$ for $n$ times, we have
$$
\mu^\ba=\sum p_{j_1}\ldots p_{j_n} \mu^\ba\circ \left(S_{j_1}\circ \ldots \circ S_{j_n}\right)^{-1},
$$
where the sum is taken over all tuples $(j_1,\ldots, j_n)\in \{1,\ldots,m\}^n$.  Now Lemma \ref{lem-5.3} follows.
\end{proof}

\begin{pro}
\label{pro-5.1} Let  $T_1$, \ldots, $T_m$ be non-singular linear mappings from $\R^d$ to $\R^d$ satisfying $\|T_i\|<1/2$ for $1\leq i\leq m$. Let $q\in (0,1)$ and
$\alpha\in \{D'(q-), D'(q+)\}$.  Assume that $D(q)/(q-1)<1$ and $\alpha q-D(q)\leq 1$. Then  for  $\L^{md}$-a.e. $\ba\in \R^{md}$, $\tau(\mu^\ba,q)=\tau(q)=D(q)$,
$E(\mu^\ba,\alpha)\neq \emptyset$ and furthermore,
$$\dim_H E(\mu^\ba,\alpha)=\alpha q-\tau(q).$$

\end{pro}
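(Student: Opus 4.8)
The plan is to exhibit one ergodic measure on $\Sigma$ whose push-forward under $\pi^\ba$ is exactly dimensional and carried by $E(\mu^\ba,\alpha)$, and then to pin both $\dim_H E(\mu^\ba,\alpha)$ and $\tau(\mu^\ba,q)$ from above using the general bounds of Lemmas~\ref{lem-5.1}--\ref{lem-5.2}. First note that $D(q)/(q-1)<1\le d$ makes \eqref{e-1.2} collapse to $\tau(q)=D(q)$. Next, by Proposition~\ref{pro-3.4}(i) there is an ergodic $\eta\in\I(G_q)$ with $\alpha=\big(\int f\,d\eta\big)/\lambda_1(\eta)$; here $\lambda_1(\eta)<0$ since $\|T_i\|<1/2$. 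Because $D(q)/(q-1)\in(0,1)$, \eqref{e-3.6} with $k=0$ gives $\phi_*^{D(q)/(q-1)}(\eta)=\tfrac{D(q)}{q-1}\lambda_1(\eta)$, and substituting into Lemma~\ref{lem-3.2} yields $h_\eta(\sigma)=\big(D(q)-q\alpha\big)\lambda_1(\eta)$, equivalently $h_\eta(\sigma)+\phi_*^{\alpha q-D(q)}(\eta)=0$. Since $h_\eta(\sigma)\ge0$ and $\lambda_1(\eta)<0$ this forces $\alpha q-D(q)\ge0$, and combined with the hypothesis $\alpha q-D(q)\le1$ it identifies $\dim_{LY}\eta=\alpha q-D(q)=:\gamma\in[0,1]$.

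Fix such an $\eta$ (it depends on $q$ and $\alpha$ but not on $\ba$) and apply Theorem~\ref{thm-2.3}(i)--(ii): for $\L^{md}$-a.e.\ $\ba\in\R^{md}$ the measure $\nu:=\eta\circ(\pi^\ba)^{-1}$ is exactly dimensional with $\dim_H\nu=\min\{\dim_{LY}\eta,d\}=\gamma$. Fix such an $\ba$. It then suffices to prove (a) $d(\mu^\ba,z)=\alpha$ for $\nu$-a.e.\ $z$, and (b) $\tau(\mu^\ba,q)\le D(q)$. Indeed (a) gives $\nu(E(\mu^\ba,\alpha))=1$, hence $E(\mu^\ba,\alpha)\ne\emptyset$ and, by the definition of $\dim_H$ for measures, $\dim_H E(\mu^\ba,\alpha)\ge\dim_H\nu=\gamma$; and (b) together with Lemma~\ref{lem-5.1} (using $q<1$) gives $\tau(\mu^\ba,q)=D(q)=\tau(q)$, after which $E(\mu^\ba,\alpha)\subset\{z:\underline d(\mu^\ba,z)\le\alpha\}$ and Lemma~\ref{lem-5.2} give $\dim_H E(\mu^\ba,\alpha)\le\alpha q-\tau(\mu^\ba,q)=\gamma$, so every claimed equality holds.

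For the upper bound in (a): for $\eta$-a.e.\ $x\in\Sigma$, Birkhoff's ergodic theorem gives $\tfrac1n\log p_{x|n}\to\int f\,d\eta=\alpha\lambda_1(\eta)$ and Lemma~\ref{lem-915} gives $\tfrac1n\log\alpha_1(T_{x|n})\to\lambda_1(\eta)$. Since $\alpha_1(T_{x|n-1})\le c^{-1}\alpha_1(T_{x|n})$ with $c=\min_i\alpha_d(T_i)>0$, for each small $r$ one can pick $n=n(x,r)\to\infty$ with $cr<\alpha_1(T_{x|n})\le r$; writing $z=\pi^\ba(x)$ and $S_{x|n}=S_{x_1}\circ\cdots\circ S_{x_n}$, one has $z\in\pi^\ba([x|n])=S_{x|n}(F)$, a set of diameter $\le C\alpha_1(T_{x|n})\le Cr$, so $B_{Cr}(z)\supset\pi^\ba([x|n])$ and Lemma~\ref{lem-5.3} gives $\mu^\ba(B_{Cr}(z))\ge p_{x|n}$; dividing $\log\mu^\ba(B_{Cr}(z))$ by $\log(Cr)<0$ and letting $r\to0$ (using $\tfrac1n\log(Cr)\to\lambda_1(\eta)$) yields $\overline d(\mu^\ba,z)\le\alpha$, hence $\overline d(\mu^\ba,z)\le\alpha$ for $\nu$-a.e.\ $z$. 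For the lower bound in (a): for every $\epsilon>0$, Lemmas~\ref{lem-5.1}--\ref{lem-5.2} give $\dim_H\{z:\underline d(\mu^\ba,z)\le\alpha-\epsilon\}\le(\alpha-\epsilon)q-\tau(\mu^\ba,q)\le(\alpha-\epsilon)q-D(q)=\gamma-\epsilon q<\gamma$; since $\nu$ is exactly dimensional with $\dim_H\nu=\gamma$, every Borel set of Hausdorff dimension $<\gamma$ is $\nu$-null, so $\underline d(\mu^\ba,z)\ge\alpha$ for $\nu$-a.e.\ $z$, proving (a). For (b): fix $\epsilon>0$; by Egorov's theorem there are a set $A\subset\mathrm{supp}(\mu^\ba)$ with $\nu(A)\ge\tfrac12$ and an $r_0>0$ so that $\nu(B_r(z))\le r^{\gamma-\epsilon}$ and $\mu^\ba(B_r(z))\ge r^{\alpha+\epsilon}$ for all $z\in A$ and $0<r<r_0$. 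For $r<r_0/5$, the $5r$-covering lemma yields $z_j\in A$ with the $B_r(z_j)$ pairwise disjoint and $\bigcup_jB_{5r}(z_j)\supset A$, and $\tfrac12\le\sum_j\nu(B_{5r}(z_j))\le N(5r)^{\gamma-\epsilon}$ bounds their number $N$ from below; hence the partition function of $\mu^\ba$ at scale $r$ is at least $\sum_j\mu^\ba(B_r(z_j))^q\ge N\,r^{q(\alpha+\epsilon)}\ge c'\,r^{\,q\alpha-\gamma+(1+q)\epsilon}=c'\,r^{\,D(q)+(1+q)\epsilon}$, using $\gamma=q\alpha-D(q)$. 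Dividing by $\log r<0$, letting $r\to0$ and then $\epsilon\to0$ gives $\tau(\mu^\ba,q)\le D(q)$, i.e.\ (b).

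I expect the geometric half of (a) to be the main obstacle: one must guarantee that a Euclidean ball of radius comparable to the \emph{largest} singular value $\alpha_1(T_{x|n})$ already contains the whole cylinder $\pi^\ba([x|n])=S_{x|n}(F)$ (true because that cylinder, being the image of $F$ under the affine map $S_{x|n}$, has diameter $\asymp\alpha_1(T_{x|n})$) while the consecutive ratios $\alpha_1(T_{x|n-1})/\alpha_1(T_{x|n})$ stay bounded; the $\mu^\ba$-mass of that cylinder is then bounded below by $p_{x|n}$ via Lemma~\ref{lem-5.3}. This is precisely where the hypothesis $\alpha q-D(q)\le1$, i.e.\ $\dim_{LY}\eta\le1$, enters: it lets a single singular value fix the relevant scale. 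A secondary subtlety is that the matching lower bound $\underline d(\mu^\ba,z)\ge\alpha$ must be obtained indirectly, by combining the coarse multifractal estimate of Lemma~\ref{lem-5.2} (with only $\tau(\mu^\ba,q)\ge D(q)$ available at that stage) with the exact dimensionality of $\nu$, rather than by a direct covering estimate on $\mu^\ba(B_r(z))$.
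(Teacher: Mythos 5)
Your proposal is correct and, in its core structure, matches the paper's proof: you pick an ergodic $\eta\in\I(G_q)$ with $\alpha=\int f\,d\eta/\lambda_1(\eta)$ via Proposition~\ref{pro-3.4}(i), identify $\dim_{LY}\eta=\alpha q-D(q)\in[0,1]$, invoke Theorem~\ref{thm-2.3} to make $\nu=\eta\circ(\pi^\ba)^{-1}$ exactly dimensional of dimension $\alpha q-\tau(q)$ for a.e.\ $\ba$, bound $\overline d(\mu^\ba,\cdot)\le\alpha$ $\nu$-a.e.\ through the cylinder estimate of Lemma~\ref{lem-5.3}, and get $\underline d(\mu^\ba,\cdot)\ge\alpha$ $\nu$-a.e.\ by combining Lemma~\ref{lem-5.2}, the bound $\tau(\mu^\ba,q)\ge\tau(q)$ from Lemma~\ref{lem-5.1} (valid since $q<1$), and the exact dimensionality of $\nu$ --- exactly as in the paper. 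The one genuine deviation is your step (b): the paper obtains $\tau(\mu^\ba,q)\le\tau(q)$ with no extra work, by applying Lemma~\ref{lem-5.2} once more to the set $\{z:\,d(\mu^\ba,z)=\alpha\}$, which carries $\nu$ and hence has Hausdorff dimension at least $\alpha q-\tau(q)$, forcing $\alpha q-\tau(\mu^\ba,q)\ge\alpha q-\tau(q)$; you instead prove the same inequality directly by an Egorov plus $5r$-covering (Vitali) packing argument, producing $\asymp r^{-(\gamma-\epsilon)}$ disjoint $r$-balls centered in a set where $\mu^\ba(B_r(z))\ge r^{\alpha+\epsilon}$. Your route is longer but equally valid and makes the partition-function lower bound explicit rather than funneling it through the coarse bound of Lemma~\ref{lem-5.2}. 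One small correction to your closing commentary: the hypothesis $\alpha q-D(q)\le 1$ is what allows Theorem~\ref{thm-2.3}(ii) to be applied (exact dimensionality is stated there only for $\dim_{LY}\eta\in[0,1]$, and it also guarantees $\min\{\dim_{LY}\eta,d\}=\dim_{LY}\eta$); it is not needed for the single-singular-value covering estimate giving $\overline d(\mu^\ba,z)\le\alpha$, which holds without any restriction on $\alpha q-D(q)$.
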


\begin{proof}
 Since $D(q)/(q-1)<1\leq d$, by \eqref{e-1.2}, we have $\tau(q)=D(q)$. Let $\alpha\in \{\tau'(q+), \tau'(q-)\}$. Then by Proposition \ref{pro-3.4}(i), there exists an
 ergodic measure $\eta\in \I(G_q)$ such that
\begin{equation}
\label{e-5.1} \alpha= \frac{\int f \; d\eta}{\lambda_1(\eta)}.
\end{equation}
This together with \eqref{e-3.7} yields $\alpha q-\tau(q)=-\frac{h_\eta(\sigma)}{\lambda_1(\eta)}$.
Since $\alpha q-\tau(q)\leq 1$ by assumption, due to  \eqref{e-5.1} and Definition \ref{de-2.2}, we have
\begin{equation}
\label{e-5.2} \dim_{LY} \eta=\alpha q-\tau(q)\leq 1.
\end{equation}

Take $\ba\in \R^{md}$ so that $\eta\circ (\pi^{\ba})^{-1}$ is exactly dimensional and $\dim_H \eta\circ (\pi^{\ba})^{-1}=\alpha q-\tau(q)$. By Theorem \ref{thm-2.3}, the
set of such points $\ba$ has the full $md$-dimensional Lebesgue measure.  Take a large $R$ so that $B({\bf 0}, R)$ contains the attractor of the IFS  $\{S_i=T_i+a_i\}_{i=1}^m$.
(Here and afterwards, we  also write $B(z,r)$ for $B_r(z)$.) Then for any $x=(x_i)_{i=1}^\infty\in \Sigma$ and $n\in \N$, by Lemma \ref{lem-5.3} we have
\begin{equation}
\label{e-5.3}
\begin{split}
\mu^\ba\left(B\left(\pi^\ba x, 2R\|T_{x|n}\|\right)\right)&\geq p_{x|n} ~\mu^\ba\left( S_{x|n}^{-1}B\left(\pi^\ba x, 2R\|T_{x|n}\|\right)\right)\\ &\geq p_{x|n}\;\mu^\ba
(B({\bf 0},R))=p_{x|n},
\end{split}
\end{equation}
where in the second inequality we have used an easily checked fact  $$S_{x|n}(B({\bf 0}, R))\subset B(\pi^\ba x, 2R\|T_{x|n}\|).$$ By \eqref{e-5.3}, we have
$$
\overline{d}(\mu^\ba, \pi^\ba x)\leq \limsup_{n\to \infty} \frac{\log p_{x|n}}{\log \|T_{x|n}\|},\quad x\in \Sigma.
$$
By Kingman's  sub-additive ergodic theorem and \eqref{e-5.1}, we have
\begin{equation}
\label{e-5.4} \overline{d}(\mu^\ba,\pi^\ba x)\leq \frac{\int f \; d\eta}{\lambda_1(\eta)}=\alpha \;\mbox{ for $\eta$-a.e }x\in \Sigma.
\end{equation}

Take an strictly increasing sequence $(\alpha_n)$ so that $\lim_{n\to \infty}\alpha_n=\alpha$.   Then by Lemmas \ref{lem-5.2}-\ref{lem-5.1}, for each $n$,
\begin{equation}
\label{e-916} \dim_H\{z\in \R^d:\; \underline{d}(\mu^\ba, z)\leq \alpha_n\}\leq \alpha_n q-\tau(\mu^\ba, q)<\alpha q-\tau(q).
\end{equation}
Since $\eta\circ (\pi^{\ba})^{-1}$ is exactly dimensional and $\dim_H \eta\circ (\pi^{\ba})^{-1}=\alpha q-\tau(q)$, we must have
\begin{equation}
\label{e-917} \eta\circ (\pi^{\ba})^{-1}\{z\in \R^d:\; \underline{d}(\mu^\ba, z)\leq \alpha_n\}=0, \quad n=1,2,\ldots;
\end{equation}
for otherwise if the left-hand side of \eqref{e-917} is greater than $0$, then $$\dim_H\{z\in \R^d:\; \underline{d}(\mu^\ba, z)\leq \alpha_n\}\geq \dim_H \eta\circ
(\pi^\ba)^{-1}=\alpha q-\tau(q),$$ which contradicts \eqref{e-916}. Hence
$$
\eta\circ (\pi^{\ba})^{-1}\{z\in \R^d:\; \underline{d}(\mu^\ba, z)< \alpha\}=0.
$$
Equivalently, we have
$$
\eta\{x\in \Sigma:\; \underline{d}(\mu^\ba, \pi^\ba x)< \alpha\}=0.
$$
This combining with \eqref{e-5.4} yields
$$
\eta\{x\in \Sigma:\; d(\mu^\ba, \pi^\ba x)= \alpha\}=1.
$$
Hence
$$
\dim_H\{z\in \R^d:\; d(\mu^\ba, z)=\alpha\}\geq \dim_H\eta\circ (\pi^\ba)^{-1}=\alpha q-\tau(q).
$$
However by Lemma \ref{lem-5.2}, $\alpha q-\tau(\mu^\ba,q)$ is an upper-bound for the left-hand side of the above inequality, therefore we must have $\alpha
q-\tau(\mu^\ba,q)\geq  \alpha q-\tau(q)$. But by  Lemma \ref{lem-5.1}, we have  $\tau(\mu^\ba,q)\geq \tau(q)$ (noting that $q<1$). Thus we have the equalities
$\tau(\mu^\ba,q)=\tau(q)$ and
$$
\dim_H\{z\in \R^d:\; d(\mu^\ba, z)=\alpha\}=\alpha q-\tau(q).
$$
This finishes the proof of Proposition \ref{pro-5.1}.
\end{proof}

In the reminder part of this section, we shall put more assumption on the linear maps $T_i$ ($1\leq i\leq m$).

\begin{pro}
\label{pro-5.2} Assume that $T_i$ ($i=1,\ldots,m$) are of  the form   $$T_i=\mbox{diag}(t_{i,1}, t_{i,2},\ldots, t_{i,d})$$
 with  $\frac{1}{2}>t_{i,1}>t_{i,2}>\ldots>t_{i,d}>0.$
  Let $q\in (1,2)$. Assume that there exists
an integer $k\in \{0,\ldots, d-1\}$ such that
$$
D(q)/(q-1)\in (k, k+1) \mbox{ and } \alpha q-D(q)\in (k, k+1),
$$
where $\alpha=D'(q)$.
\begin{itemize}
 \item If $k=0$,  then
 for  $\L^{md}$-a.e. $\ba\in \R^{md}$,
$E(\mu^\ba,\alpha)\neq \emptyset$ and
$$\dim_H E(\mu^\ba,\alpha)=\alpha q-\tau(q).$$
\item If $k>0$, then
 for  $\L^{md}$-a.e. $\ba\in \R^{md}$,
$\underline{E}(\mu^\ba,\alpha)\neq \emptyset$ and
$$\dim_H \underline{E}(\mu^\ba,\alpha)=\alpha q-\tau(q).$$
\end{itemize}

 \end{pro}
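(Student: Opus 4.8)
The plan is to mimic the structure of the proof of Proposition~\ref{pro-5.1}, but now working in the regime $q>1$ where one needs both an upper bound valid for all $\ba$ and a matching lower bound valid for $\L^{md}$-a.e.\ $\ba$, with the lower bound obtained by projecting a well-chosen ergodic measure. By Proposition~\ref{pro-3.4}(ii), since $D(q)/(q-1)\in(k,k+1)$ and (by the additive structure coming from the diagonal assumption) $\I(G_q)$ is a singleton, there is a \emph{unique} ergodic $\eta\in\I(G_q)$, and $D'(q)=\alpha$ is given by \eqref{e-3.8'}. First I would use Lemma~\ref{lem-3.2} together with \eqref{e-3.7} to compute the Lyapunov dimension of $\eta$: exactly as in Proposition~\ref{pro-5.1} one gets $\alpha q-\tau(q)=\dim_{LY}\eta$, but now the hypothesis $\alpha q-D(q)\in(k,k+1)$ guarantees $\dim_{LY}\eta\in(k,k+1)\subset(k,k+1]$, so in particular $\dim_{LY}\eta\le d$ and Theorem~\ref{thm-2.3}(i) gives $\dim_H\eta\circ(\pi^\ba)^{-1}=\alpha q-\tau(q)$ for $\L^{md}$-a.e.\ $\ba$; moreover $\eta\circ(\pi^\ba)^{-1}$ is exactly dimensional (by the extension of Theorem~\ref{thm-2.3}(ii) noted in the remark after that theorem, or, in the diagonal case, by \cite[Theorem 2.12]{FeHu09}).

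Next I would establish the pointwise dimension estimate along $\eta$-typical codings. When $k=0$ the argument is essentially that of Proposition~\ref{pro-5.1}: using Lemma~\ref{lem-5.3} and the inclusion $S_{x|n}(B(\mathbf 0,R))\subset B(\pi^\ba x,2R\|T_{x|n}\|)$ one gets $\overline d(\mu^\ba,\pi^\ba x)\le\limsup_n \log p_{x|n}/\log\|T_{x|n}\|$, which by Kingman and \eqref{e-3.8'} (with $k=0$, so $\alpha=\int f\,d\eta/\lambda_1(\eta)$) equals $\alpha$ for $\eta$-a.e.\ $x$; then, taking $\alpha_n\uparrow\alpha$ and combining Lemma~\ref{lem-5.2}, Lemma~\ref{lem-5.1} (which for $q>1$ gives $\tau(\mu^\ba,q)\le\tau(q)$, hence $\alpha_n q-\tau(\mu^\ba,q)<\alpha q-\tau(q)$) with exact dimensionality of $\eta\circ(\pi^\ba)^{-1}$, one shows $\eta\circ(\pi^\ba)^{-1}\{\underline d(\mu^\ba,\cdot)<\alpha\}=0$, whence $d(\mu^\ba,\pi^\ba x)=\alpha$ for $\eta$-a.e.\ $x$, giving $E(\mu^\ba,\alpha)\neq\emptyset$ and the dimension lower bound; the upper bound is Lemma~\ref{lem-5.2}. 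When $k>0$ the upper \emph{ball} estimate $B(\mathbf 0,R)\subset\cdots$ only controls balls of radius comparable to the \emph{largest} semi-axis $t_{x|n,1}$ of the cylinder $S_{x|n}(\mathcal C)$, so one cannot hope for exact \emph{local} dimension $\alpha$; instead the natural quantity is $\liminf_{r\to0}\log\mu^\ba(B_r(\pi^\ba x))/\log r$. The point is that using the diagonal structure, for $\eta$-a.e.\ $x$ the ball $B_r(\pi^\ba x)$ with $r\asymp t_{x|n,k+1}$ still contains $S_{x|n}(\mathbf 0)$-neighbourhoods controlled by the first $k+1$ singular values, and Lemma~\ref{lem-5.3} applied along the subsequence of scales $r_n=t_{x|n,k+1}$ combined with $\log\|T_{x|n}\|=\log t_{x|n,1}$ versus $\log r_n=\log t_{x|n,k+1}$ and the formula \eqref{e-3.8'} yields $\liminf_n\log\mu^\ba(B_{r_n}(\pi^\ba x))/\log r_n\le\alpha$ for $\eta$-a.e.\ $x$. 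This is exactly the analogue of \eqref{e-5.4} but for $\underline d$ rather than $\overline d$; it is where the hypothesis $\alpha q-D(q)\in(k,k+1)$ (so that the ``effective dimension'' $\dim_{LY}\eta$ lands in the right integer window $(k,k+1)$ and the relevant singular value is $\alpha_{k+1}$) is used.

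Then I would run the same dichotomy argument as before with $\underline E$ in place of $E$: for $\alpha_n\uparrow\alpha$, Lemma~\ref{lem-5.2} bounds $\dim_H\{\underline d(\mu^\ba,\cdot)\le\alpha_n\}\le\alpha_n q-\tau(\mu^\ba,q)<\alpha q-\tau(q)=\dim_H\eta\circ(\pi^\ba)^{-1}$, so exact dimensionality forces $\eta\circ(\pi^\ba)^{-1}\{\underline d(\mu^\ba,\cdot)<\alpha\}=0$; combined with the previous paragraph's inequality $\underline d(\mu^\ba,\pi^\ba x)\le\alpha$ $\eta$-a.e., this gives $\underline d(\mu^\ba,\pi^\ba x)=\alpha$ for $\eta$-a.e.\ $x$, hence $\eta\circ(\pi^\ba)^{-1}$ is carried by $\underline E(\mu^\ba,\alpha)$, so $\underline E(\mu^\ba,\alpha)\neq\emptyset$ and $\dim_H\underline E(\mu^\ba,\alpha)\ge\alpha q-\tau(q)$; the reverse inequality is again Lemma~\ref{lem-5.2} (applied with $\beta=\alpha$, noting $\underline E(\mu^\ba,\alpha)\subset\{\underline d(\mu^\ba,\cdot)\le\alpha\}$) together with $\tau(\mu^\ba,q)\le\tau(q)$.

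The main obstacle is the upper estimate on $\underline d(\mu^\ba,\pi^\ba x)$ in the case $k>0$: one must show that along the scales $r_n\asymp t_{x|n,k+1}$ the measure $\mu^\ba(B_{r_n}(\pi^\ba x))$ is not much larger than $p_{x|n}$ times a constant, i.e.\ that the ball of radius $t_{x|n,k+1}$ around $\pi^\ba x$ does not see dramatically more mass than a single depth-$n$ cylinder despite the cylinders being long thin boxes stacked along the $k$ contracting-slowest directions. I expect this to require a geometric packing argument (in the spirit of the proof of Proposition~\ref{pro-4.1} / the Falconer--Jordan--Pollicott--Simon machinery) controlling, for $\L^{md}$-a.e.\ $\ba$, the number of depth-$\ell$ cylinders of comparable ``$(k{+}1)$-size'' meeting a given such ball, and matching this count against the $L^q$-spectrum bound of Lemma~\ref{lem-5.1}; alternatively one invokes that $\eta\circ(\pi^\ba)^{-1}$ is exactly dimensional of dimension $\alpha q-\tau(q)\in(k,k+1)$ to transfer the lower-dimension control directly. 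Everything else is a routine adaptation of Propositions~\ref{pro-3.4} and \ref{pro-5.1}.
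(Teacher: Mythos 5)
Your outline for $k=0$ and the final dichotomy argument are fine, but there is a genuine gap at exactly the point you flag as ``the main obstacle'': the inequality $\liminf_n \log\mu^\ba(B_{r_n}(\pi^\ba x))/\log r_n\le\alpha$ along the scales $r_n\asymp\alpha_{k+1}(T_{x|n})$ is asserted, not proved, and neither of your two suggested routes delivers it. Lemma~\ref{lem-5.3} gives $\mu^\ba(B_{r_n}(\pi^\ba x))\ge p_{x|n}\,\mu^\ba\bigl(S_{x|n}^{-1}(B_{r_n}(\pi^\ba x))\bigr)$, but for $k>0$ the preimage $S_{x|n}^{-1}(B_{r_n}(\pi^\ba x))$ is a thin slab with semi-axes $\ell\alpha_{k+1}(T_{x|n})/\alpha_i(T_{x|n})$, which in the first $k$ directions shrink exponentially; its $\mu^\ba$-measure is \emph{not} bounded below by a constant, so one must show it is at least of order $\exp\bigl(n(k\lambda_{k+1}(\eta)-\phi^k_*(\eta))-o(n)\bigr)$ for $\eta$-a.e.\ $x$, and this is the whole content of the paper's Lemma~\ref{lem-5.7}. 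Your second alternative (invoking exact dimensionality of $\eta\circ(\pi^\ba)^{-1}$) cannot work: that controls the measure $\eta\circ(\pi^\ba)^{-1}$ of balls, whereas what is needed is a lower bound on the $\mu^\ba$-measure of slabs, and your first alternative (a packing count matched against Lemma~\ref{lem-5.1}) is not developed and is not the mechanism that makes the hypothesis $\alpha q-D(q)\in(k,k+1)$ do its real work.

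What the paper actually does is the following, and it is the one idea missing from your proposal. Let $\pi_k^\ba=\Xi_k\circ\pi^\ba$ be the projection onto the first $k$ coordinates, which is the coding map of the $k$-dimensional IFS $\{\mathrm{diag}(t_{i,1},\dots,t_{i,k})+\Xi_k(a_i)\}$. The hypotheses $D(q)/(q-1)>k$ and $\alpha q-D(q)>k$ force $\dim_{LY}\mu>k$ and $\dim_{LY}\eta>k$ for this projected system, so Theorem~\ref{thm-2.3}(iii) gives $\mu\circ(\pi_k^\ba)^{-1}\ll\L^k$ and $\eta\circ(\pi_k^\ba)^{-1}\ll\L^k$ for a.e.\ $\ba$; Proposition~\ref{pro-4.1} then upgrades the first to an \emph{equivalence} with $\L^k$ restricted to the projected attractor, whence $\eta\circ(\pi_k^\ba)^{-1}\ll\mu\circ(\pi_k^\ba)^{-1}$. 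The slab measure above equals $\mu\circ(\pi_k^\ba)^{-1}$ of a $k$-dimensional box of volume $\approx\exp\bigl(n(k\lambda_{k+1}(\eta)-\phi^k_*(\eta))\bigr)$, and a grid-partition/Borel--Cantelli argument shows that the set of $x$ for which this box is abnormally light has $\mu\circ(\pi_k^\ba)^{-1}$-measure, hence $\eta\circ(\pi_k^\ba)^{-1}$-measure, tending to $0$; together with the shift-invariance of $\eta$ this yields $\underline d(\mu^\ba,\pi^\ba x)\le\alpha$ $\eta$-a.e. Without this absolute-continuity step (or a genuine substitute for it) your proof of the case $k>0$ is incomplete.
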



\begin{proof}[Proof of Proposition \ref{pro-5.2}]
First consider the case that $k=0$. In this case, we can take a proof essentially identical to that of Proposition \ref{pro-5.1}.   The main difference lying here is that
we directly assume that $\tau(\mu^\ba,q)=\tau(q)$ (since $q\in (1,2)$, by Theorem \ref{thm-1.2}, the set of all such $\ba$ has the full $md$-dimensional Lebesgue
measure).

Next we consider the case that $1\leq k\leq d-1$. Let $\mu$ denote the Bernoulli product measure $\prod_{i=1}^\infty \{p_1,\ldots, p_m\}$ on $\Sigma$. Since $q>1$ and
$D(q)/(q-1)>k$, by Lemma \ref{lem-3.2}, we have
\begin{equation*}
\begin{split}
h_\mu(\sigma)+(1-q) \phi_*^k(\mu)+q\int f d\mu<h_\mu(\sigma)+(1-q) \phi_*^{D(q)/(q-1)}(\mu)+q\int f d\mu\leq 0.
\end{split}
\end{equation*}
Hence $h_\mu(\sigma)+(1-q) \phi_*^k(\mu)+q\int f d\mu<0$, thus $h_\mu(\sigma)+\phi^k_*(\mu)>0$ (noting that  $\int f d\mu=-h_\mu(\sigma)$). By Definition \ref{de-2.2}, we
have
\begin{equation}
\label{e-5.5} \dim_{LY}\mu>k.
\end{equation}

 By Proposition \ref{pro-3.4}(ii), there exists an ergodic measure $\eta$ on $\Sigma$ such that
 $$
 \alpha=\frac{\int fd\eta-\phi^k_*(\eta)}{\lambda_{k+1}(\eta)}+k.
 $$
This together with \eqref{e-3.7} yields
 $$
 \alpha q-D(q)=\frac{h_\eta(\sigma)+\phi^k_*(\eta)}{-\lambda_{k+1}(\eta)}+k.
 $$
 Since by assumption $\alpha q-D(q)\in (k, k+1)$, by Definition \ref{de-2.2}, we have
\begin{equation}
\label{e-5.6} \dim_{LY}\eta=\alpha q-D(q)>k.
\end{equation}

 Let $\Xi_k$ be the canonical projection from $\R^d$ to $\R^k$ defined by $(y_1, y_2,\ldots, y_d)\mapsto (y_1,\ldots, y_k)$. For $\ba=(a_1,\ldots, a_m)\in \R^{md}$,
 denote
 $$
 \pi_k^\ba:=  \Xi_k \circ \pi^\ba.
 $$
 It is easy to see that $\pi_k^\ba$ is the coding map associated with the new IFS $\{\widetilde{T_i}+\Xi_k (a_i)\}_{i=1}^m$, where
 $\widetilde{T_i}=\mbox{diag}(t_1,\ldots, t_{k})$. According to \eqref{e-5.5}-\eqref{e-5.6}, we have also $\dim_{LY}\mu>k$, $\dim_{LY}\eta>k$ (associated with
 $\{\widetilde{T_i}\}_{i=1}^m$). Thus by Theorem \ref{thm-2.3}, for $\L^{md}$-a.e $\ba\in \R^{md}$,  both $\eta\circ  (\pi_k^\ba)^{-1}$ and $\mu\circ (\pi_k^\ba)^{-1}$ are
 absolutely continuous to the $k$-dimensional Lebesgue measure, and hence by Proposition \ref{pro-4.1}, $\eta\circ  (\pi_k^\ba)^{-1}\ll\mu\circ (\pi_k^\ba)^{-1}$ (since
 $\mu\circ (\pi_k^\ba)^{-1}$ is equivalent to the restriction of $\L^d$ on $F^\ba$, where $F^\ba=\pi^\ba(\Sigma)$).

 Now fix $\ba=(a_1,\ldots, a_m)\in \R^{md}$ so that  $\eta\circ  (\pi_k^\ba)^{-1}$ and $\mu\circ (\pi_k^\ba)^{-1}$ are equivalent and $\tau(\mu^\ba, q)=\tau(q)$.
 We have the following.

 \begin{lem}
 \label{lem-5.7} Let $\ell=\mbox{diam} F^\ba$, where $F^\ba=\pi^\ba(\Sigma)$. For any $\delta>0$, we have $\eta(A_\delta)=0$, where
 \begin{equation*}
 \begin{split}
 &A_\delta:= \Big\{x\in \Sigma:\;  \mu^\ba(B(\pi^\ba x, \sqrt{d}\ell \alpha_{k+1}(T_{x|n}))\leq p_{x|n} \exp(-n\phi^{k}_*(\eta)+nk\lambda_{k+1}(\eta)-\delta n)\\
  &\mbox{}\qquad \qquad\mbox{ for all large enough $n$}\Big\}.
 \end{split}
 \end{equation*}
 \end{lem}
We will give the proof of the above lemma a little bit later.   Now we use it to complete the proof of Proposition \ref{pro-5.2}. Since $\eta(A_\delta)=0$, we have for
$\eta$-a.e. $x\in X$,
$$
\frac{\log (\mu^\ba(B(\pi^\ba x, \sqrt{d}\ell \alpha_{k+1}(T_{x|n}) )} {\log \alpha_{k+1}(T_{x|n})}\leq \frac{ \log (p_{x|n}
\exp(-n\phi^{k}_*(\eta)+nk\lambda_{k+1}(\eta)-\delta n))} {\log \alpha_{k+1}(T_{x|n})}
$$
for infinitely many  $n$. Then applying Kingman's  sub-additive ergodic theorem and letting $\delta\to 0$, we obtain
$$
\underline{d}(\mu^\ba, \pi^\ba x)\leq \frac{\int f d\eta-\phi^k_*(\eta)}{\lambda_{k+1}(\eta)}+k =\alpha\quad  \mbox{ for $\eta$-a.e $x\in \Sigma$}.
$$
This plays a similar role as \eqref{e-5.4} in Proposition  \ref{pro-5.1}. To complete the proof, we can  use  the same argument as in the proof of Proposition \ref{pro-5.1}
(the only difference lying here is that we already have the equality $\tau(\mu^\ba, q)=\tau(q)$.).
 \end{proof}

\begin{proof}[Proof of Lemma \ref{lem-5.7}]

For $z=(z_1,\ldots,z_d)\in \R^d$ and $t_1,\ldots, t_d>0$, denote
$$
W(z; t_1,\ldots, t_d):=\prod_{i=1}^d[z_i-t_i, z_i+t_i],\quad \widetilde W((z_1,\ldots, z_k); t_1,\ldots, t_k):=\prod_{i=1}^k[z_i-t_i, z_i+t_i].
$$
In particular, for $r>0$, denote $Q_{r}(z):= \prod_{i=1}^d[z_i-r, z_i+r]$. It is clear that
\begin{equation}
\label{e-z1} Q_r(z)\subset B(z, \sqrt{d}r),\quad\forall\; z\in \R^d, \; r>0.
\end{equation}

Now fix $\delta>0$. Denote
\begin{equation*}
 \begin{split}
 &A':= \Big\{x\in \Sigma:\;  \mu^\ba(Q_{\ell \alpha_{k+1}(T_{x|n})}(\pi^\ba x))\leq p_{x|n} \exp\left(n(1+\delta)(k\lambda_{k+1}(\eta)-\phi^{k}_*(\eta))\right)\\
  &\mbox{}\qquad \qquad\mbox{ for large enough $n$}\Big\}.
 \end{split}
 \end{equation*}
By \eqref{e-z1}, we have $A_\delta\subset A'$. Hence to show $\eta(A_\delta)=0$, it suffices to show that $\eta(A')=0$.

Notice that  for any $x\in \Sigma$ and $n\in \N$,
\begin{equation*}
\begin{split}
&\mbox{} S_{x|n}^{-1} (Q_ {\ell\alpha_{k+1}(T_{x|n})}(\pi^\ba x))\\ &=W\left(\pi^\ba \sigma^nx;
\frac{\ell\alpha_{k+1}(T_{x|n})}{\alpha_{1}(T_{x|n})},\frac{\ell\alpha_{k+1}(T_{x|n})}{\alpha_{2}(T_{x|n})},\ldots,\frac{\ell\alpha_{k+1}(T_{x|n})}{\alpha_{d}(T_{x|n})}
\right)\\ &\supset W\left(\pi^\ba \sigma^nx;
\frac{\ell\alpha_{k+1}(T_{x|n})}{\alpha_{1}(T_{x|n})},\frac{\ell\alpha_{k+1}(T_{x|n})}{\alpha_{2}(T_{x|n})},\ldots,\frac{\ell\alpha_{k+1}(T_{x|n})}{\alpha_{k}(T_{x|n})},
\ell, \ldots,\ell \right).\\
\end{split}
\end{equation*}
It follows that
\begin{equation}\label{e-z2}
\begin{split}
&\mbox{}\mu^\ba\left(Q_ {\ell\alpha_{k+1}(T_{x|n})}(\pi^\ba x))\right)\\ &\geq p_{x|n}\;\mu^\ba\left( S_{x|n}^{-1} (Q_ {\ell\alpha_{k+1}(T_{x|n})}(\pi^\ba x))\right)\qquad
\qquad (\mbox{by Lemma \ref{lem-5.3}})\\ &\geq p_{x|n}\; \mu^\ba\left( W\left(\pi^\ba \sigma^nx;
\frac{\ell\alpha_{k+1}(T_{x|n})}{\alpha_{1}(T_{x|n})},\frac{\ell\alpha_{k+1}(T_{x|n})}{\alpha_{2}(T_{x|n})},\ldots,\frac{\ell\alpha_{k+1}(T_{x|n})}{\alpha_{k}(T_{x|n})},
\ell, \ldots,\ell \right)\right).\\ &= p_{x|n}\; \mu^\ba_k \left( \widetilde W\left(\pi^\ba_k \sigma^nx;
\frac{\ell\alpha_{k+1}(T_{x|n})}{\alpha_{1}(T_{x|n})},\frac{\ell\alpha_{k+1}(T_{x|n})}{\alpha_{2}(T_{x|n})},\ldots,\frac{\ell\alpha_{k+1}(T_{x|n})}{\alpha_{k}(T_{x|n})}
\right)\right),\\
\end{split}
\end{equation}
here we write for brevity $\mu^\ba_k:=\mu\circ (\pi^\ba_k)^{-1}$.

 For $n\in \N$, let $\Omega_n$ denote the set of $x\in \Sigma$ such that
$$
\frac{\ell\alpha_{k+1}(T_{x|n})}{\alpha_{i}(T_{x|n})}\geq \exp(j(1+\delta/2)(\lambda_{k+1}(\eta)-\lambda_{i}(\eta))),\quad  \forall\; i=1,\ldots, k.
$$
Then by Lemma \ref{lem-915}, $\lim_{n\to \infty} \eta\left(\bigcap_{j=n}^\infty \Omega_j\right)=1$.

 Furthermore denote
\begin{equation*}
\begin{split}
&A'_n= \left\{x\in \Sigma:\;  \mu^\ba\left(B(\pi^\ba x, \sqrt{d}\ell \alpha_{k+1}(T_{x|n})\right)\leq p_{x|n}
\exp\left(n(1+\delta)(k\lambda_{k+1}(\eta)-\phi^{k}_*(\eta))\right)\right\},\\ &\qquad u_{n,i}=\exp\left(n(1+\delta/2)(\lambda_{k+1}(\eta)-\lambda_{i}(\eta))\right),\qquad
i=1,\ldots, k.\\ &C_n= \left\{x\in \Sigma:\;  \mu^\ba_k(\widetilde{W}(\pi^\ba_k\sigma^nx; u_{n,1},\ldots, u_{n,k}))\leq
\exp(n(1+\delta)(k\lambda_{k+1}(\eta)-\phi^{k}_*(\eta)))\right\}.
\end{split}
\end{equation*}
By \eqref{e-z2}, we have $A_n'\cap \Omega_n\subset C_n$.

To complete our proof, we need some further notation. For $n\in \N$, denote
$$
{\mathcal R}_n:=\left\{\prod_{i=1}^k [h_i u_{n,i}/2, (h_i+1)u_{n,i}/2):\; h_1,\ldots, h_k\in \Z\right\}.
$$
Clearly, ${\mathcal R}_n$ is a partition of $\R^k$ by rectangles of edge lengths $u_{n,1}$, \ldots, $u_{n,k}$. For any $w\in \R^k$, let $R_n(w)$ denote the element in
${\mathcal R}_n$ that contains $w$.
 Notice that for any $R\in  {\mathcal R}_n$,
$$
\L^k(R)=\prod_{i=1}^k u_{n,i}=\exp(n(1+\delta/2)(k\lambda_{k+1}(\eta)-\phi^{k}_*(\eta))).
$$
It follows that  if $w\in  \pi^\ba_k(\Sigma)$ satisfies
$$
\mu^\ba_k(\widetilde{W}(w; u_{n,1},\ldots, u_{n,k}))\leq \exp\left(n(1+\delta)(k\lambda_{k+1}(\eta)-\phi^{k}_*(\eta))\right),
$$
then
\begin{eqnarray*}
\mu^\ba_k(R_n(w))&\leq & \mu^\ba_k(\widetilde{W}(w; u_{n,1},\ldots, u_{n,k}))\\
 &\leq & \L^k(R_n(w)) \exp\left(n\delta/2(k\lambda_{k+1}(\eta)-\phi^{k}_*(\eta))\right)=\L^k(R_n(w))\beta^n,
\end{eqnarray*}
where  $\beta:=\exp(\delta/2(k\lambda_{k+1}(\eta)-\phi^{k}_*(\eta)))\in (0,1)$. It follows that
$$C_n\subset \sigma^{-n}\circ (\pi^\ba_k)^{-1}(\Gamma_n),$$ where
$$
\Gamma_n:=\bigcup R,$$ in which the union is taken over the collection of $R\in {\mathcal R}_n$ so that $ R\cap  \pi^\ba_k(\Sigma)\neq \emptyset$ and $\mu^\ba_k(R)\leq
\L^k(R)\beta^n$. Note that
\begin{equation}\label{e-z3}
\mu^\ba_k(\Gamma_n)\leq \sum_{R\in {\mathcal R}_n:\;  R\cap  \pi^\ba_k(\Sigma)\neq \emptyset} \L^k(R)\beta^n\leq (2\ell)^k  \beta^n,
\end{equation}
where $\ell=\mbox{diam}(F^\ba)$.
Meanwhile  $A_n'\cap \Omega_n\subseteq C_n$ and $C_n\subset \sigma^{-n}\circ (\pi^\ba_k)^{-1}(\Gamma_n)$, we have
$$
A'_n\cap \Omega_n\subset  \sigma^{-n}\circ(\pi^\ba_k)^{-1}(\Gamma_n).
$$
By the invariance of $\eta$, we have $\eta(A'_n\cap \Omega_n)\leq \eta(\sigma^{-n}\circ(\pi^\ba_k)^{-1}(\Gamma_n))=\eta\circ(\pi^\ba_k)^{-1}(\Gamma_n)$.
Since $\eta\circ(\pi^\ba_k)^{-1}\ll \mu^\ba_k$ and $\lim_{n\to 0}\mu^\ba_k(\Gamma_n)= 0$ (by \eqref{e-z3}), we have
$\lim_{n\to \infty}\eta(A'_n\cap \Omega_n)=0$.
Therefore
$$
\lim_{n\to \infty}\eta\left(\bigcap_{j=n}^\infty (A'_j\cap \Omega_j)\right)=0.
$$

Note that  $\bigcap_{j=n}^\infty A'_j \subset  (\bigcap_{j=n}^\infty (A'_j\cap \Omega_j)) \cup (\Sigma\backslash  \bigcap_{j=n}^\infty \Omega_j)$, and
$\lim_{n\to \infty}\eta\left(\bigcap_{j=n}^\infty \Omega_j\right)=1$.
It follows that $\eta(\bigcap_{j=n}^\infty A'_j)=0$ and
$$
\eta(A')=\eta\left(\bigcup_{n=1}^\infty\bigcap_{j=n}^\infty A'_j\right)=0,
$$
as desired.
\end{proof}

\begin{proof}[Proof of Theorem \ref{thm-1.3}] It follows directly from Propositions \ref{pro-3.4}-\ref{pro-5.1}-\ref{pro-5.2}.
\end{proof}

\section{Extension of Falconer's formula for $q>2$ and complements to Theorem~\ref{thm-1.3}}\label{S-6}

Let $T_1,\ldots, T_m$ be non-singular linear transformations from $\R^d$ to $\R^d$ and $(p_1,\ldots, p_m)$ a probability vector. For $\ba=(a_1,\ldots, a_m)\in \R^{md}$, let $\mu^\ba$ denote the self-affine measure associated with the IFS $\{T_i+a_i\}_{i=1}^m$ and $(p_1,\ldots, p_m)$.  We begin from the following lemma.

\begin{lem}\label{extension}
Suppose that  $\|T_i\|<1/2$ for all $1\le i\le m$.     Then, for every $q>2$,  for $\L^{md}$-a.e. $\ba\in\R^{md}$, we
have $ \tau({\mu^\ba},q)\ge  \min  ((q-1) u(q),d) $, where
\begin{equation}
\label{e-ec1}
u(q)=\sup\Big \{s\geq 0: \sum_{k=0}^\infty\sum_{I\in\Sigma_k} \big(\phi^{s(q-1)}(T_{I}) \big )^{-1}p_I^{q} <\infty\Big \}.
\end{equation}
\end{lem}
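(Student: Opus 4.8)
\noindent The plan is to derive, for each fixed $q>2$, a bound on the $L^q$-sums of $\mu^\ba$ uniform enough to survive the passage to ``almost every'' $\ba$, through a moment (energy) estimate in the spirit of Falconer~\cite{Fal99} and Solomyak~\cite{Sol98}. Fix a large open cube $\mathcal U\subset\R^{md}$ and let $\mathbf P$ be the normalised Lebesgue measure on $\mathcal U$; since $\mathcal U$ is arbitrary it suffices to treat $\mathbf P$-a.e.\ $\ba$. I will use the standard fact that for $q>1$ one has the dyadic description $\tau(\xi,q)=\liminf_{n\to\infty}\big(\log\sum_{Q}\xi(Q)^{q}\big)/(-n\log 2)$, the sum being over the dyadic cubes $Q$ of side $2^{-n}$ (see, e.g., \cite{Ols95}). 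It follows that if, for some $s>0$ with $s(q-1)<d$, the $q$-energy
$$
I_{s,q}(\xi):=\int\Big(\int |x-y|^{-s}\,d\xi(y)\Big)^{q-1}d\xi(x)
$$
is finite, then, since $Q\subset B(x,\sqrt d\,2^{-n})$ whenever $x\in Q$ and $Q$ has side $2^{-n}$, so that $\xi(Q)\le(\sqrt d\,2^{-n})^{s}\int|x-y|^{-s}d\xi(y)$, integrating the $(q-1)$-st power gives $\sum_{Q}\xi(Q)^{q}\le(\sqrt d\,2^{-n})^{s(q-1)}I_{s,q}(\xi)$ for all $n$, whence $\tau(\xi,q)\ge s(q-1)$. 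Thus the lemma reduces to showing: for every $s$ with $0<s<u(q)$, $s(q-1)<d$ and (harmlessly) $s(q-1)\notin\Z$, one has $I_{s,q}(\mu^\ba)<\infty$ for $\mathbf P$-a.e.\ $\ba$; discarding a countable union of $\mathbf P$-null sets as $s$ runs through a sequence increasing to $\min\big(u(q),d/(q-1)\big)$ then yields $\tau(\mu^\ba,q)\ge\min\big((q-1)u(q),d\big)$ a.e.\ (the case $u(q)=0$ being vacuous, as $\tau(\mu^\ba,q)\ge0$ always).

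To obtain $I_{s,q}(\mu^\ba)<\infty$ a.e.\ I would bound its $\mathbf P$-expectation. Writing $\mu^\ba=\eta\circ(\pi^\ba)^{-1}$ with $\eta$ the Bernoulli measure $\{p_1,\dots,p_m\}^{\N}$ on $\Sigma$ and applying Tonelli,
$$
\mathbf E_{\mathbf P}\big[I_{s,q}(\mu^\ba)\big]=\int_\Sigma\mathbf E_{\mathbf P}\Big[\Big(\int_\Sigma|\pi^\ba\omega-\pi^\ba\omega'|^{-s}\,d\eta(\omega')\Big)^{q-1}\Big]d\eta(\omega).
$$
The analytic input is the transversality estimate available when $\|T_i\|<1/2$ (Falconer~\cite{Fal88}, Solomyak~\cite{Sol98}; the same mechanism underlies Theorem~\ref{thm-2.3}): there is $C_t<\infty$ such that, for all $\omega\ne\omega'$ in $\Sigma$ and $0<t<d$,
$$
\int_{\mathcal U}|\pi^\ba\omega-\pi^\ba\omega'|^{-t}\,d\ba\le C_t\,\phi^{t}(T_{\omega\wedge\omega'})^{-1},
$$
$\omega\wedge\omega'$ being the longest common prefix; this rests on the fact that the push-forward of $\mathbf P$ under $\ba\mapsto\pi^\ba\omega-\pi^\ba\omega'$ has a density bounded independently of $\omega,\omega'$ and supported in a fixed ball $B(\mathbf 0,D)$, together with the elementary estimate $\int_{B(\mathbf 0,D)}|Tw|^{-t}\,dw\asymp_{t,d}\phi^{t}(T)^{-1}$, valid for $0<t<d$. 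Because $q-1\ge1$, Minkowski's integral inequality — applied first in $L^{q-1}(\mathbf P)$ to the $\omega'$-average, then in $L^{q-1}(\eta)$ to the $\omega$-integral — combined with the transversality estimate at $t=s(q-1)<d$ and the splitting of the $\omega'$-average according to the length $j$ of $\omega\wedge\omega'$ (using $\eta([J])=p_J$), yields
$$
\mathbf E_{\mathbf P}\big[I_{s,q}(\mu^\ba)\big]\le C_{s(q-1)}\Big(\sum_{j\ge0}\Theta_j^{\,1/(q-1)}\Big)^{q-1},\qquad
\Theta_j:=\sum_{J\in\Sigma_j}p_J^{\,q}\,\phi^{s(q-1)}(T_J)^{-1}.
$$
Finally choose $s'\in(s,u(q))$ with $\sum_j\sum_{J\in\Sigma_j}p_J^{q}\phi^{s'(q-1)}(T_J)^{-1}<\infty$ (possible by definition of $u(q)$); since $\|T_i\|<1/2$ gives $\alpha_1(T_J)<2^{-j}$ for $J\in\Sigma_j$, one has $\phi^{s(q-1)}(T_J)^{-1}<2^{-j(s'-s)(q-1)}\phi^{s'(q-1)}(T_J)^{-1}$ for all such $J$, so $\Theta_j$ decays geometrically and $\sum_j\Theta_j^{1/(q-1)}<\infty$. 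Hence $\mathbf E_{\mathbf P}[I_{s,q}(\mu^\ba)]<\infty$, so $I_{s,q}(\mu^\ba)<\infty$ for $\mathbf P$-a.e.\ $\ba$, as needed. Together with Lemma~\ref{lem-5.1} this gives $\min((q-1)u(q),d)\le\tau(\mu^\ba,q)\le\tau(q)$ a.e.

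The crucial point I expect is the use of $q-1\ge1$ (i.e.\ $q>2$) in the double Minkowski step: it is precisely this that lets the exponent be pushed inside and $|x-y|^{-s}$ raised to the power $q-1$ be replaced by $|x-y|^{-s(q-1)}$, producing the singular value function $\phi^{s(q-1)}$ rather than $\big(\phi^{s}\big)^{q-1}$. As $\phi^{s(q-1)}(T)\le\big(\phi^{s}(T)\big)^{q-1}$ (by log-concavity of $t\mapsto\log\phi^{t}(T)$), the bound is genuinely lossy, which is exactly why the conclusion carries the exponent $u(q)$ of~\eqref{e-ec1} rather than the larger $D(q)/(q-1)$; for $1<q<2$ one would instead exploit the subadditivity of $x\mapsto x^{q-1}$ as in~\cite{Fal99} and recover the sharp value, but that route is unavailable here. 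A secondary technical nuisance is to pin down the transversality estimate in the precise anisotropic form above (with $\phi^{t}(T_{\omega\wedge\omega'})^{-1}$, not a power of $|\det T_{\omega\wedge\omega'}|$, on the right), which depends on the volume asymptotics $\int_{B(\mathbf 0,D)}|Tw|^{-t}dw\asymp\phi^{t}(T)^{-1}$; these hold for $0<t<d$, with the implied constant blowing up as $t$ approaches an integer — hence the harmless restriction $s(q-1)\notin\Z$.
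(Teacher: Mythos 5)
Your proof is correct and follows essentially the same route as the paper: a moment estimate integrated over $\ba$ in a bounded region, Falconer's transversality bound $\int|\pi^\ba x-\pi^\ba y|^{-s(q-1)}\,d\ba\le C\,\phi^{s(q-1)}(T_{x\wedge y})^{-1}$, Minkowski's integral inequality exploiting $q-1\ge 1$, and a comparison of $s$ with a slightly larger $s'<u(q)$ to force geometric decay in the prefix length. The only (harmless) deviation is at the final summation over prefix lengths, where you apply the triangle inequality in $L^{q-1}(\eta)$ and sum $\Theta_j^{1/(q-1)}$ directly, whereas the paper uses a H\"older inequality with a small auxiliary exponent $\epsilon$ (which is where it uses $q>2$ strictly); both devices rest on the same exponent comparison and yield the same conclusion.
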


\begin{proof} Fix $q> 2$. Let $s\in (0,d/(q-1))$ so that $s(q-1)$ is non-integral.  We adapt an idea used in \cite{BB06} for determining
the $L^q$-spectrum of projected measures.   Fix $\rho>0$ and $\epsilon\in (0,1)$. Let $B({\bf 0},\rho)$ stand for the closed ball of radius $\rho$ centered at ${\bf 0}$ in $\R^{md}$. Let $\mu$ denote the Bernoulli product measure on $\Sigma$ with the weight $(p_1,\ldots, p_m)$. Clearly $\mu^\ba=\mu\circ (\pi^\ba)^{-1}$. For $r>0$,
we have

\begin{eqnarray*}
&& \int_{B({\bf 0},\rho)}\int\mu^\ba(B(z,r))^{q-1}\mathrm{d}\mu^\ba(z)\mathrm{d}\ba=\int_{B({\bf 0},\rho)}\int_{\Sigma}\mu^\ba(B(\pi^\ba x,r))^{q-1}\mathrm{d}\mu(x)\,\mathrm{d}\ba\\
&\mbox{}&\quad=\int_{\Sigma}\int_{B({\bf 0},\rho)}\Big (\int_{\Sigma}\mathbf{1}_{\{|\pi^\ba y-\pi^\ba x|\le r\}}\mathrm{d}\mu(y)\Big)^{q-1}\mathrm{d}\ba\, \mathrm{d}\mu(x)\\
&\mbox{}&\quad\le
\int_{\Sigma}\int_{B({\bf 0},\rho)}\Big (\int_{\Sigma}\frac{r^s}{|\pi^\ba y-\pi^\ba x|^s}\mathrm{d}\mu(y)\Big)^{q-1}\mathrm{d}\ba\, \mathrm{d}\mu(x)\\
&\mbox{}&\quad \le \int_{\Sigma} \Big
(\int_{\Sigma}\Big (\int_{B({\bf 0},\rho)}   \frac{r^{s(q-1)}}{|\pi^\ba y-\pi^\ba x|^{s(q-1)}}  \mathrm{d}\ba\Big )^{1/(q-1)}    \mathrm{d}\mu(y)  \Big)^{q-1}  \mathrm{d}\mu(x),
\end{eqnarray*}
where we use Minkowski's inequality in the last inequality.
By  \cite[Lemma 2.1]{Fal99}, $$\int_{B({\bf 0},\rho)} \frac{1}{|\pi^\ba y-\pi^\ba x|^{s(q-1)}}  \mathrm{d}\ba\leq \frac{C}{\phi^{s(q-1)}(T_{x\land y})}$$ for some  $C=C(\rho,s(q-1)) >0$. Hence we have
\begin{eqnarray*}
&&\int_{B({\bf 0},\rho)}\int\mu^\ba(B(z,r))^{q-1}\mathrm{d}\mu^\ba(z)\mathrm{d}\ba\\
&\mbox{}&\quad \le Cr^{s(q-1)}  \int_{\Sigma} \Big (\int_{\Sigma}\big(\phi^{s(q-1)}(T_{x\land y}) \big )^{-1/(q-1)}\mathrm{d}\mu(y)
\Big)^{q-1} \mathrm{d}\mu(x) \\
&\mbox{}&\quad \le  Cr^{s(q-1)}  \int_{\Sigma}  \Big (\sum_{k=0}^\infty \big(\phi^{s(q-1)}(T_{x|k}) \big )^{-1/(q-1)}\mu([x|k]) \Big)^{q-1}
\mathrm{d}\mu(x)\\
&\mbox{}&\quad \le  M Cr^{s(q-1)}  \int_{\Sigma}  \Big (\sum_{k=0}^\infty \big(\phi^{s(q-1)}(T_{x|k}) \big )^{-1}\mu([x|k])^{(q-1)(1-\epsilon)} \Big)
\mathrm{d}\mu(x)\\
&\mbox{}& \qquad\qquad\qquad(\text{by H\"{o}lder's inequality})\\
&\mbox{}&\quad = M Cr^{s(q-1)}  \sum_{k=0}^\infty\sum_{I\in\Sigma_k} \big(\phi^{s(q-1)}(T_{I}) \big )^{-1}\mu([I])^{q-(q-1)\epsilon} ,
\end{eqnarray*}
where
$
M=\sup_{x\in\Sigma} \Big (\sum_{k=0}^\infty \mu([x|k])^{\epsilon(q-1)/(q-2)}\Big )^{(q-2)/(q-1)}<\infty$.

Now, let $0<s_1<s_0$. Set $\gamma=(s_0-s_1)(q-1)$. Given $\epsilon'>0$, for each $I\in\Sigma^*$ such that $\mu([I])>0$, we have
$$
\frac{\big(\phi^{s_1(q-1)}(T_{I}) \big )^{-1}\mu([I])^{q-\epsilon'}}{\big(\phi^{s_0(q-1)}(T_{I}) \big )^{-1}\mu([I])^{q}}=\frac{\phi^{s_0(q-1)}(T_{I})}{\phi^{s_1(q-1)}(T_{I})}\mu([I])^{-\epsilon'}\le \alpha_1(T_I)^{\gamma}\mu([I])^{-\epsilon'}\le  (2^{-\gamma }c^{-\epsilon'})^{|I|},
$$
where $c=\min_{1\leq i\leq m}p_i$.
Suppose that $\epsilon'$ is so small that $2^{-\gamma }c^{-\epsilon'}<1$ and set $\epsilon=\epsilon'/(q-1)$. If $s_0< \min (u(q),d/(q-1))$ and $s_1(q-1)$ is not an integer, we deduce from the above estimates that
$$
\sup_{r>0}\frac{\int_{B({\bf 0},\rho)}\int\mu^\ba(B(z,r))^{q-1}\mathrm{d}\mu^\ba(z)\mathrm{d}\ba}{r^{s_1(q-1)}}<\infty.
$$
This implies that for all $s'_1<s_1$,
$$
\int_{B({\bf 0},\rho)}\sum_{n\ge 1} \frac{\int\mu^\ba(B(z,2^{-n}))^{q-1}\mathrm{d}\mu^\ba(z)}{2^{-ns'_1(q-1)}}\mathrm{d}\ba<\infty,
$$
hence, for $\L^{md}$-almost every $\ba\in B({\bf 0},\rho)$, we have
$$
\liminf_{n\to \infty}\frac{-1}{n\log(2)}{\log \int\mu^\ba(B(z,2^{-n}))^{q-1}\mathrm{d}\mu^\ba(z)}\ge s'_1(q-1).
$$
Moreover, the left hand side in the previous inequality is nothing but $\tau(\mu^\ba,q)$. Since $s'_1$ and $s_1$ can be taken arbitrarily close to  $\min (u(q),d/(q-1))$ (as long as $s_1(q-1)$ is not an integer) and $\rho$ is arbitrary, we get the desired lower bound for $\tau(\mu^\ba,q)$.

\end{proof}

Let $D(\cdot)$ and $\tau(\cdot)$ be defined as in \eqref{e-1.1}-\eqref{e-1.2}. By Lemma \ref{extension},  we can extend Falconer's formula of $\tau(\mu^\ba,q)$  as follows.

\begin{thm}\label{corextension}
Suppose that  $\|T_i\|<1/2$ for all $1\le i\le m$.
\begin{enumerate}
\item For $\L^{md}$-a.e. $\ba\in\R^{md}$ we have $\tau({\mu^\ba},q)=\tau(q)$ for all $q$ in the following set
\begin{equation}
\label{e-ec}
[2,\;\sup\{t:\; D(t)/(t-1)\le 1, \ \tau(t)\le 1\}].
\end{equation}
 This set is a non-empty interval for instance if $\tau'(1+)\le 1$, in which case it contains $[2,1+1/\tau'(1+)]$.

\item If the $T_i$ are similitudes, then for $\L^{md}$-a.e. $\ba\in\R^{md}$ we have $\tau({\mu^\ba},q)=\tau(q)$ for all $q\in [2,\max\{q:\tau(q)\le d\}]$.
\end{enumerate}
\end{thm}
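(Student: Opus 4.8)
The plan is to obtain the lower bound $\tau(\mu^\ba,q)\ge\tau(q)$ from Lemma~\ref{extension}, the matching upper bound $\tau(\mu^\ba,q)\le\tau(q)$ being free from Lemma~\ref{lem-5.1} (valid for all $\ba$ since $q>1$). So for each fixed $q$ in the relevant range it suffices to check that $\min((q-1)u(q),d)\ge\tau(q)$, with $u(q)$ as in \eqref{e-ec1}. One then passes from ``for each $q$, $\L^{md}$-a.e.\ $\ba$'' to ``$\L^{md}$-a.e.\ $\ba$, for all $q$'' by picking a countable dense set $\mathcal Q$ in the interval, discarding the corresponding countable union of Lebesgue-null sets, and using that both $q\mapsto\tau(\mu^\ba,q)$ (an $L^q$-spectrum, hence concave on $\R$) and $q\mapsto\tau(q)$ (concave on $(1,\infty)$) are continuous, so that equality on $\mathcal Q$ propagates to the closure; the endpoint $q=2$ is in any case covered by Theorem~\ref{thm-1.2} (or by letting $q\downarrow 2$).

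The core point is to identify $u(q)$ with $D(q)/(q-1)$ in the relevant regime. By \eqref{e-1.1}, for $q>1$ one has $D(q)/(q-1)=\sup\{s\ge0:\ \sum_{k\ge1}\sum_{I\in\Sigma_k}\phi^s(T_I)^{1-q}p_I^q<\infty\}$. Since $\|T_i\|<1/2$, every singular value of every $T_I$ is $<1$, so whenever $s\le 1$ and $s(q-1)\le 1$ we get $\phi^s(T_I)=\alpha_1(T_I)^s$, $\phi^{s(q-1)}(T_I)=\alpha_1(T_I)^{s(q-1)}$, hence $\bigl(\phi^{s(q-1)}(T_I)\bigr)^{-1}=\alpha_1(T_I)^{-s(q-1)}=\phi^s(T_I)^{1-q}$. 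Next I would observe that on the interval \eqref{e-ec} one has $\tau(q)\le 1$ and $D(q)/(q-1)\le 1$, whence $\tau(q)=D(q)\le 1$: indeed for $q>2$ the hypothesis $\tau(q)\le 1$ already forces $D(q)/(q-1)=\tau(q)/(q-1)\le 1/(q-1)<1\le d$ via \eqref{e-1.2}, so the set defining the right endpoint of \eqref{e-ec} coincides, within $[2,\infty)$, with $\{t\ge 2:\tau(t)\le 1\}$; and since $\tau$ is concave on $(1,\infty)$, continuous and vanishing at $1$, and nonnegative there, it is non-decreasing on $(1,\infty)$, so this is a closed interval $[2,q^*]$. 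Consequently, for $q\in(2,q^*)$ and any $s<D(q)/(q-1)$ we have $s<1$ and $s(q-1)<D(q)\le 1$, so the two series agree and $u(q)\ge D(q)/(q-1)$; therefore $\min((q-1)u(q),d)\ge\min(D(q),d)=D(q)=\tau(q)$, and Lemma~\ref{extension} yields $\tau(\mu^\ba,q)\ge\tau(q)$ for $\L^{md}$-a.e.\ $\ba$. That \eqref{e-ec} contains $[2,1+1/\tau'(1+)]$ when $\tau'(1+)\le 1$ is then immediate from concavity, $\tau(q)\le\tau'(1+)(q-1)\le 1$ there.

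For part (2) the argument is cleaner because similitudes have singular value functions with no breakpoints: if $T_i$ is a similitude of ratio $\rho_i$ then every singular value of $T_I$ equals $\rho_I:=\rho_{i_1}\cdots\rho_{i_k}$, so $\phi^s(T_I)=\rho_I^s$ for every $s\ge 0$, and hence $\bigl(\phi^{s(q-1)}(T_I)\bigr)^{-1}=\rho_I^{-s(q-1)}=\phi^s(T_I)^{1-q}$ with no size restriction; thus $u(q)=D(q)/(q-1)$ for every $q>1$, and \eqref{e-1.2} reads $\tau(q)=\min(D(q),(q-1)d)$. On $[2,q^{**}]$, where $q^{**}=\max\{q:\tau(q)\le d\}$ (a nonempty closed interval, since $\tau(2)=\min(D(2),d)\le d$ always and $\tau$ is non-decreasing and continuous on $(1,\infty)$), the inequality $\tau(q)\le d\le (q-1)d$ forces $D(q)\le d$, so $\tau(q)=D(q)=\min(D(q),d)=\min((q-1)u(q),d)$, which is $\le\tau(\mu^\ba,q)$ for $\L^{md}$-a.e.\ $\ba$ by Lemma~\ref{extension}; together with Lemma~\ref{lem-5.1} this gives equality, and one again extends from a countable dense set of $q$ by continuity.

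The main obstacle, and the place the hypotheses enter, is exactly the identification $u(q)=D(q)/(q-1)$: for general linear maps $\phi^s(T_I)$ has breakpoints at integer exponents, so the identity $(\phi^{s(q-1)}(T_I))^{-1}=\phi^s(T_I)^{1-q}$ can only be read off when both $s$ and $s(q-1)$ lie below $1$, which is precisely what $\tau(q)\le 1$ (equivalently $D(q)\le 1$) secures — and this is what pins down the upper endpoint of the interval in part (1). In part (2) this obstruction disappears entirely. Everything else is bookkeeping with Lemmas~\ref{lem-5.1} and~\ref{extension} and with the elementary concavity and monotonicity of $\tau$ and $D$.
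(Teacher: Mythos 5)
Your proposal is correct and follows essentially the same route as the paper: the lower bound comes from Lemma~\ref{extension} after identifying $(q-1)u(q)$ with $D(q)$ (possible because $\max(D(q),D(q)/(q-1))\le 1$ keeps all exponents below the first breakpoint of $\phi^s$, and unconditionally for similitudes), the upper bound is Lemma~\ref{lem-5.1}, and a single null set is obtained via continuity of $\tau(\mu^\ba,\cdot)$ and $\tau$. Your extra bookkeeping (deducing $D(q)/(q-1)<1$ from $\tau(q)\le 1$ for $q>2$, monotonicity of $\tau$, and invoking Theorem~\ref{thm-1.2} at $q=2$) only fills in details the paper leaves implicit.
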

\begin{proof}
By continuity of the functions $\tau({\mu^\ba},\cdot)$ and $\tau(\cdot)$, it is enough to prove the result for a
fixed $q$ and $\L^{md}$-almost every $\ba$.

(1) Let $q$ be a point in the interval given as in \eqref{e-ec}. Since $q-1\ge 1$, $D(q)/(q-1)\le 1$ implies that $D(q)=\tau(q)\le 1$. Thus  $\max (D(q), D(q)/(q-1))\le 1$, so for all $0<s\le D(q)$ and $I\in\Sigma^*$ we have  $\phi^{s(q-1)}(T_{I})=(\phi^{s}(T_{I}))^{q-1}$ by definition of the singular value functions $\phi^s$. Hence
$(q-1)u(q)=D(q)$, where $u(q)$ is defined as in \eqref{e-ec1}. Therefore   $\tau(q)=D(q)=(q-1)u(q)$. This gives the conclusion thanks to Lemma~\ref{extension} and  Lemma \ref{lem-5.1}. Finally, if $\tau'(1+)\le 1$ and $q\le 1+1/\tau'(1+)$, by concavity of $\tau$ we have $\tau(q)\le \tau'(1+)(q-1)\le 1$, and also we have $\tau(q)/(q-1)=D(q)/(q-1)\le 1$.

(2) Let $q\geq 2$ so that $\tau(q)\le d$. Since $T_i$ are similitudes, we have $\phi^{s(q-1)}(T_{I})=(\phi^{s}(T_{I}))^{q-1}$ for all $I\in\Sigma^*$ and $s>0$. By
\eqref{e-ec1}, $(q-1)u(q)=D(q)$.  Since $\tau(q)\leq d\leq d(q-1)$, we have $\tau(q)=D(q)=(q-1)u(q)$. By Lemma \ref{extension}, $\tau_{\mu^\ba}(q)\ge \min (\tau(q),d)=\tau(q)$  for $\L^{md}$-almost all $\ba\in \R^{md}$.  This together with Lemma \ref{lem-5.1} yields the desired result.
\end{proof}

As an application of Theorem \ref{corextension}, we have the following two theorems.

\begin{thm}\label{thm-6.3}
The conclusions of Theorem~\ref{thm-1.3}(ii) extend to those $q\ge 2$ such that $D(q)< q-1$ and $\tau(q)< 1$.
\end{thm}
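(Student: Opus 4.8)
The plan is to derive this from Theorem~\ref{corextension}(1) together with a verbatim repetition of the proof of Proposition~\ref{pro-5.1} (equivalently, the case $k=0$ of Proposition~\ref{pro-5.2}). We keep the standing hypotheses of Theorem~\ref{thm-1.3}(ii), so the $T_i$ are diagonal with $\frac12>t_{i,1}>\cdots>t_{i,d}>0$, and we fix $q\ge 2$ with $D(q)<q-1$ and $\tau(q)<1$ (we may assume $D(q)>0$, the case $D(q)=0$ being trivial). Then $D(q)/(q-1)\in(0,1)\subset[0,d]$, hence $\tau(q)=D(q)$ and we are in the regime $k=0$; in particular $\alpha:=D'(q)$ exists and, by Proposition~\ref{pro-3.4}(ii), there is an ergodic $\eta\in\I(G_q)$ with $\alpha=\int f\,d\eta/\lambda_1(\eta)$. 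Since $\int f\,d\eta\le 0$ and $\lambda_1(\eta)<0$ we get $\alpha\ge 0$, and \eqref{e-3.7} with $k=0$ gives $\alpha q-D(q)=-h_\eta(\sigma)/\lambda_1(\eta)\ge 0$.

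The first step is to place $q$ inside the interval \eqref{e-ec}, so that $\tau(\mu^\ba,q)=\tau(q)$ holds for $\L^{md}$-a.e.\ $\ba$ by Theorem~\ref{corextension}(1). Since $D$ is concave on $(1,\infty)$ with $D(1)=0$, its one-sided derivatives on $(1,q]$ are at least $D'(q)=\alpha\ge 0$, so $D$ is nondecreasing there; hence for every $t\in[2,q]$ we have $D(t)\le D(q)<1\le t-1$, so $D(t)/(t-1)<1\le d$ and $\tau(t)=D(t)<1$. Thus $[2,q]\subset\{t:\ D(t)/(t-1)\le 1,\ \tau(t)\le 1\}$, which puts $q$ in the range \eqref{e-ec}. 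Next, concavity of $\tau$ with $\tau(1)=0$ gives $\alpha=\tau'(q)\le\tau(q)/(q-1)$ (recall $\tau=D$ near $q$), whence $0\le\alpha q-\tau(q)\le\tau(q)/(q-1)<1/(q-1)\le 1$; together with Definition~\ref{de-2.2} and the identity above, this yields $\dim_{LY}\eta=\alpha q-\tau(q)\in[0,1)$.

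With these two facts in hand I would copy the proof of Proposition~\ref{pro-5.1}. Since $\dim_{LY}\eta<1\le d$, Theorem~\ref{thm-2.3}(i)--(ii) gives, for $\L^{md}$-a.e.\ $\ba$, that $\eta\circ(\pi^\ba)^{-1}$ is exactly dimensional with Hausdorff dimension $\dim_{LY}\eta=\alpha q-\tau(q)$; fix such an $\ba$ that also satisfies $\tau(\mu^\ba,q)=\tau(q)$. The inequality \eqref{e-5.3} and Kingman's sub-additive ergodic theorem give $\overline d(\mu^\ba,\pi^\ba x)\le\alpha$ for $\eta$-a.e.\ $x$. Choosing $\alpha_n\uparrow\alpha$, Lemma~\ref{lem-5.2} yields $\dim_H\{z:\underline d(\mu^\ba,z)\le\alpha_n\}\le\alpha_nq-\tau(q)<\alpha q-\tau(q)$, so exact dimensionality forces $\eta\circ(\pi^\ba)^{-1}$ to assign zero mass to $\{z:\underline d(\mu^\ba,z)<\alpha\}$; combined with the bound on $\overline d$ this gives $d(\mu^\ba,\pi^\ba x)=\alpha$ for $\eta$-a.e.\ $x$, hence $E(\mu^\ba,\alpha)\ne\emptyset$ and $\dim_H E(\mu^\ba,\alpha)\ge\alpha q-\tau(q)$; the reverse inequality is Lemma~\ref{lem-5.2} once more. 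This is exactly the conclusion of Theorem~\ref{thm-1.3}(ii) in the case $k=0$.

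The only genuinely new ingredient is Theorem~\ref{corextension}(1), which rests on Lemma~\ref{extension} and supplies the lower bound $\tau(\mu^\ba,q)\ge\min((q-1)u(q),d)$ for $q\ge 2$; everything else is identical to Proposition~\ref{pro-5.1}. Accordingly the only point demanding care is the bookkeeping with the concavity of $D$ and $\tau$ needed to put $q$ in the admissible range \eqref{e-ec} and to guarantee $\dim_{LY}\eta<1$; both reduce, as above, to $D'(q)\ge 0$ and $\tau'(q)\le\tau(q)/(q-1)$, and I expect no obstacle beyond this.
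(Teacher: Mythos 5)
Your proposal is correct and follows essentially the same route as the paper: invoke Theorem~\ref{corextension}(1) to get $\tau(\mu^\ba,q)=\tau(q)$ for a.e.\ $\ba$, and then repeat verbatim the argument of Proposition~\ref{pro-5.1} (i.e.\ the $k=0$ case of Theorem~\ref{thm-1.3}), which is exactly what the paper's one-line proof indicates. Your additional bookkeeping (that $q$ lies in the range \eqref{e-ec}, that $\alpha=D'(q)\ge 0$ exists via Proposition~\ref{pro-3.4}(ii), and that $\dim_{LY}\eta=\alpha q-\tau(q)<1$ so Theorem~\ref{thm-2.3}(ii) applies) just makes explicit the checks the paper leaves implicit.
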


\begin{thm}\label{thm-6.4}Suppose that the maps $T_i$ ($1\le i\le m$) are similitudes with  $\|T_i\|<1/2$. Denote  $q_{\max}=  \max (2,\max\{q>0:\tau(q)\le d\})$.
Then the following properties hold.
\begin{enumerate}
\item For all $q>0$, $D(q)$ is the analytic solution of the equation $\sum_{i=1}^m p_i^q\|T_i\|^{-t}=1$.

\item Suppose $D'(1)\ge d$. Let $s=\inf\{D(q)/(q-1):1<q\le 2\}$.

\begin{itemize}
\item If $s\ge d$, then $q_{\max}=2$ and for $\L^{md}$-a.e. $\ba\in\R^{md}$:
$$
\tau(\mu^\ba,q)=d(q-1)\text { for all $q\in[0,q_{\max}]$}.
$$
\item If $s<d$ then $q_{\max}>2$ and for $\L^{md}$-a.e. $\ba\in\R^{md}$:
$$
\tau(\mu^\ba,q)=\begin{cases}d(q-1)& \text{if }q\in [0,q_{\min}),\\
 D(q)&\text{if $q\in[q_{\min},q_{\max}]$,}
 \end{cases}
 $$
 where  $q_{\min}=\inf\{q>1: D(q)/(q-1)<d\}$; moreover, the multifractal formalism holds for $\mu^\ba$ at all $\alpha\in \{d\}\cup [D'(q_{\max}),D'(q_{\min})]$.
\end{itemize}
\smallskip

\item If $D'(1)<d$, then $q_{\max}>2$. Let  $\tilde{q}_{\min}=\inf\{q>0: D'(q)q-D(q)\le d\}$. For $\L^{md}$-a.e. $\ba\in\R^{md}$, $$
 \tau(\mu^\ba,q)=
 \begin{cases} \displaystyle\frac{d+D(\tilde{q}_{\min})}{\tilde{q}_{\min}}q-d&\text{ if $q\in [0,\tilde{q}_{\min})$},\\
 D(q)&\text{if $q\in[\tilde{q}_{\min},q_{\max}]$.}
 \end{cases}
 $$
 Moreover, the multifractal formalism holds for $\mu^\ba$ at all $\alpha\in  [D'(q_{\max}),D(1)]$. Also, for each $\alpha\in (D'(1),D'(\tilde{q}_{\min})]$, for $\L^{md}$-a.e. $\ba\in \R^{md}$, the multifractal formalism holds at  $\alpha$.
\end{enumerate}
\end{thm}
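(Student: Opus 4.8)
The plan is to assemble the statement from four inputs: (a) the explicit shape of $D$ and $\tau$ available when the $T_i$ are similitudes, (b) Falconer's formula (Theorem~\ref{thm-1.2}) on $(1,2]$ together with its extension past $2$ (Theorem~\ref{corextension}(2)), (c) the projection/exact‑dimensionality theorem (Theorem~\ref{thm-2.3}, together with the improvement quoted after it: $\eta\circ(\pi^{\ba})^{-1}$ is exactly dimensional for \emph{every} ergodic $\eta$), and (d) the ``Legendre duality from below'' argument of Proposition~\ref{pro-5.1}. For part (1): a similitude $T$ has all its singular values equal to $\|T\|$, so $\phi^{s}(T)=\|T\|^{s}$ for every $s>0$; inserting this in \eqref{e-1.1} and summing the resulting geometric series identifies $D(q)$ with the unique real root $t$ of $\sum_i p_i^q\|T_i\|^{-t}=1$, analytic in $q$ by the implicit function theorem. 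Since $(q,t)\mapsto\log\sum_i e^{q\log p_i-t\log\|T_i\|}$ is jointly convex and increasing in $t$, its zero locus is the graph of a concave function, so $D$ is concave on $(0,\infty)$; differentiating gives $D'(q)=(\sum_i\hat p_i\log p_i)/(\sum_i\hat p_i\log\|T_i\|)>0$ with $\hat p_i=p_i^q\|T_i\|^{-D(q)}$, whence $D$ is strictly increasing, $D(1)=0$, $D(0)=-s_0$ (where $\sum_i\|T_i\|^{s_0}=1$), $D(q)\to\infty$. Consequently $\tau(q)=\min(D(q),d(q-1))$ on $(1,\infty)$, $q\mapsto D(q)/(q-1)$ is decreasing there, and $q\mapsto D'(q)q-D(q)$ is non‑increasing on $(0,\infty)$ with limit $s_0$ at $0^{+}$ and value $D'(1)$ at $1$. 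From these monotonicities one reads off the interlocking of the thresholds: $q_{\max}=2\Leftrightarrow s\ge d$ and $q_{\max}>2$ otherwise, $q_{\min}\in(1,2]$ when $s<d$, $\tilde q_{\min}=0\Leftrightarrow s_0\le d$, and (since the information dimension of $\mu^{\ba}$ is $\le\min(s_0,d)$) $D'(1)\ge d\Rightarrow s_0\ge d$.

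On $[q_{\min},q_{\max}]$ in case (2) and on $[\max(1,\tilde q_{\min}),q_{\max}]$ in case (3) one has $\tau(q)=D(q)\le d$, so Theorem~\ref{thm-1.2} on $(1,2]$ and Theorem~\ref{corextension}(2) on $[2,q_{\max}]$ give $\tau(\mu^{\ba},q)=\tau(q)=D(q)$ for $\L^{md}$‑a.e.\ $\ba$; and the multifractal formalism at $\alpha=D'(q)$ for $q$ in these ranges follows from the method of Proposition~\ref{pro-5.1}: the equilibrium state of $G_q$ is the Bernoulli measure $\eta_q$ with weights $\hat p_i$ (which sum to $1$ by the first paragraph), its Lyapunov dimension equals $D'(q)q-D(q)$, so for a.e.\ $\ba$ Theorem~\ref{thm-2.3} makes $\nu_q:=\eta_q\circ(\pi^{\ba})^{-1}$ exactly dimensional of that dimension; Lemma~\ref{lem-5.3} gives $\overline d(\mu^{\ba},\pi^{\ba}x)\le D'(q)$ $\eta_q$‑a.e., while Lemmas~\ref{lem-5.1}--\ref{lem-5.2} force $\nu_q$ to annul $\{\underline d(\mu^{\ba},\cdot)\le\alpha_n\}$ for $\alpha_n\uparrow D'(q)$, whence $d(\mu^{\ba},\pi^{\ba}x)=D'(q)$ $\eta_q$‑a.e.\ and $\dim_H E(\mu^{\ba},D'(q))=D'(q)q-D(q)=\alpha q-\tau(\mu^{\ba},q)$. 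In case (3), when $q\in(\tilde q_{\min},1)$ the same argument additionally pins down the \emph{value} of $\tau$: Lemma~\ref{lem-5.1} gives $\tau(\mu^{\ba},q)\ge D(q)$, and comparing the dimension estimate just obtained with Lemma~\ref{lem-5.2} gives the reverse inequality, so $\tau(\mu^{\ba},q)=D(q)$ and the formalism holds at each $\alpha\in(D'(1),D'(\tilde q_{\min})]$ (for $\ba$ depending on $\alpha$, through $\eta_q$). This yields all the formalism assertions and the ``$D(q)$'' parts of the $\tau$ formulas.

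It remains to determine $\tau(\mu^{\ba},q)$ for $q\le q_{\min}$ (case (2)) and $q\le\tilde q_{\min}$ (case (3)). In case (2), $D'(1)\ge d$ makes the Bernoulli measure $\mu$ have Lyapunov dimension $\ge d$, so by Theorem~\ref{thm-2.3} $\mu^{\ba}$ is, for a.e.\ $\ba$, exactly dimensional of dimension $d$; hence $\mu^{\ba}$ charges $E(\mu^{\ba},d)$ (so $\dim_H E(\mu^{\ba},d)=d$) and $\dim_B F^{\ba}=d$, i.e.\ $\tau(\mu^{\ba},0)=-d$. Since $\tau(\mu^{\ba},\cdot)$ is concave with right derivative $d$ at $1$ (it equals $d(q-1)$ on $(1,q_{\min})$ by Theorem~\ref{thm-1.2}, or $D$ with $D'(1)=d$ when $q_{\min}=1$), concavity forces $\tau(\mu^{\ba},q)\le d(q-1)$ for $q\le 1$, and Lemma~\ref{lem-5.1} gives the matching lower bound; thus $\tau(\mu^{\ba},q)=d(q-1)$ on $[0,q_{\min})$, and the same concavity computation gives $\min_q(dq-\tau(\mu^{\ba},q))=d$, which is the formalism at $\alpha=d$. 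In case (3) with $s_0>d$ (the case $s_0\le d$ has $\tilde q_{\min}=0$ and is already covered), Falconer's box‑dimension theorem gives $\dim_B F^{\ba}=d$, so $\tau(\mu^{\ba},0)=-d$; as $\tau(\mu^{\ba},\cdot)$ is concave, equals $D$ on $[\tilde q_{\min},q_{\max}]$ with right derivative $D'(\tilde q_{\min})=(d+D(\tilde q_{\min}))/\tilde q_{\min}$ there, and passes through $(0,-d)$ and $(\tilde q_{\min},D(\tilde q_{\min}))$ — which both lie on the line of slope $D'(\tilde q_{\min})$ through $(0,-d)$ — it must coincide with that line on $[0,\tilde q_{\min}]$, giving the claimed affine formula $\tau(\mu^{\ba},q)=\frac{d+D(\tilde q_{\min})}{\tilde q_{\min}}q-d$.

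The main obstacle is the $q<1$ half of the second paragraph. One must identify the equilibrium measure $\eta_q$ and its Lyapunov dimension correctly, handle the boundary case $D'(q)q-D(q)=d$ (there $\nu_q$ ceases to have dimension $<d$ and the ``cannot charge a set of smaller Hausdorff dimension'' step has to be replaced by a density argument), and — because for $d\ge 2$ genuine similitudes are not covered by the diagonal‑matrix hypothesis of Theorem~\ref{thm-1.3}(ii)/Theorem~\ref{thm-6.3} — rerun the lower bound for $\dim_H E(\mu^{\ba},\alpha)$ directly from exact dimensionality rather than quoting those results. A second, genuinely necessary effort is the first paragraph: checking that $q_{\max},q_{\min},\tilde q_{\min},s,s_0,D'(1)$ fit together so that the three cases are mutually exclusive and exhaustive and all the gluing points match.
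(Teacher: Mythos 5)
Your outline reproduces the paper's proof quite closely for the $L^q$-spectrum formulas: part (1), the concavity/threshold bookkeeping, the identification of $\tau(\mu^\ba,\cdot)$ on $[q_{\min},q_{\max}]$ (resp.\ $[\tilde q_{\min},q_{\max}]$) via Theorem~\ref{thm-1.2} and Theorem~\ref{corextension}, the concavity argument forcing $\tau(\mu^\ba,q)=d(q-1)$ on $[0,q_{\min})$ and the affine expression on $[0,\tilde q_{\min})$, and the rerun of the Proposition~\ref{pro-5.1}/\ref{pro-5.2} ($k=0$) machinery for the similitude case (the paper does exactly this at the endpoints, noting $\phi^s(T)=\alpha_1(T)^s$ is multiplicative). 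Your treatment of $\alpha=d$ via exact dimensionality of $\mu^\ba$ is an acceptable substitute for the paper's appeal to Ngai's theorem.

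However, there is a genuine gap in the formalism claims for the interval of exponents coming from $q>1$. In cases (2) and (3) the theorem asserts the strong quantifier order: for $\L^{md}$-a.e.\ $\ba$, the formalism holds at \emph{all} $\alpha\in[D'(q_{\max}),D'(q_{\min})]$ (resp.\ $[D'(q_{\max}),D'(1)]$) simultaneously. Your argument proves, for each fixed $q$, the formalism at $\alpha=D'(q)$ on a full-measure set of $\ba$ that depends on $q$, because Theorem~\ref{thm-2.3} is applied to the equilibrium Bernoulli measure $\eta_q$. Since the sets $E(\mu^\ba,\alpha)$ are pairwise disjoint for distinct $\alpha$, you cannot remove this dependence by intersecting exceptional null sets over a countable dense family of $q$'s and invoking continuity (that device works for the identity $\tau(\mu^\ba,q)=\tau(q)$, where both sides are continuous in $q$, but $\alpha\mapsto\dim_H E(\mu^\ba,\alpha)$ has no such a priori regularity). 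The paper closes exactly this point by invoking the deterministic result of \cite{Fen07} (Remark~\ref{rem-6.5}(1)): for \emph{every} $\ba$, the self-similar measure $\mu^\ba$ obeys the formalism at every $\alpha=\tau'(\mu^\ba,q)$ with $q>1$; once $\tau(\mu^\ba,\cdot)=D$ on $(q_{\min},q_{\max})$ for a.e.\ $\ba$ (a single exceptional null set), this yields all interior $\alpha$ at once, and the random projection argument is needed only at the two endpoints $D'(q_{\min})$, $D'(q_{\max})$ — and, with the weaker quantifier that you correctly flagged, on $(D'(1),D'(\tilde q_{\min})]$ in case (3). As written, your proposal establishes only ``for each $\alpha$, for a.e.\ $\ba$'' on the interior of the $q>1$ range, which is strictly weaker than the statement; you need to add the appeal to \cite{Fen07} (or some other argument uniform in $\alpha$) there.
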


\begin{rem}\label{rem-6.5}
{\rm (1)} By \cite{Fen07} we know that for all $\ba\in \R^{md}$, the self-similar measure $\mu^\ba$ obeys the multifractal formalism at each $\alpha$ of the form $\tau'(\mu^\ba,q)$, with $q>1$. Moreover, the measure $\mu^\ba$ is exact dimensional by \cite{FeHu09}, so the multifractal formalism holds at $\alpha=\dim_H\, \mu^\ba$.  Theorem~\ref{thm-6.4} gives precisions on the value of the $L^q$-spectrum and the validity of the multifractal formalism. When $D'(1)>d$ and $\inf\{D(q)/(q-1):1<q\le 2\}<d$, for $\L^{md}$-a.e. $\ba\in\R^{md}$ the measure $\mu^\ba$ is absolutely continuous with respect to Lebesgue measure and has a non trivial $L^q$-spectrum. This fact is already noticed in \cite{Fen11}.

{\rm (2)} Theorem~\ref{thm-6.4} takes a form similar to that of the result obtained in \cite{BB11} for the orthogonal projections of Gibbs measures on $\R^d$ to almost every linear subspace of a given dimension between $1$ and $d$, when $d\ge 2$.
\end{rem}

\medskip

\begin{proof}[Proof of Theorem~\ref{thm-6.3}]  The proof is similar to the proof of Theorem~\ref{thm-1.3}(i), except that we already know the value of $\tau(\mu^\ba,q)$ thanks to  Theorem~\ref{corextension}.
\end{proof}

\medskip

\begin{proof}[Proof of Theorem~\ref{thm-6.4}] (1) This is clear.

(2) If $D'(1)> d$, then by Theorem~\ref{thm-1.2}, for $\L^{md}$-a.e. $ \ba\in\R^{md}$ we have  $\tau(\mu^\ba,q)=d(q-1)$ on a neighborhood of $1+$; if $D'(1)=d$, either $D$ is linear equal to $d(q-1)$, or it is strictly concave and still by Theorem~\ref{thm-1.2}, for $\L^{md}$-a.e. $ \ba\in\R^{md}$ we have  $ \tau(\mu^\ba,q)=D(q)$ on a neighborhood of $1+$. Consequently, in both cases $ \tau'(\mu^\ba,1+)=d$, so since $\tau(\mu^\ba,\cdot)$ is concave $\tau(\mu^\ba,0)\ge -d$ and $\tau(\mu^\ba,1)=0$, we must have $\tau(\mu^\ba,q)=d(q-1)$ over $[0,1]$.

Now, if $s\ge d$ then $D(q)\ge d(q-1)$ for all $q\in (1,2]$, so by Theorem~\ref{thm-1.2}, for $\L^{md}$-a.e. $ \ba\in\R^{md}$ we have  $\tau(\mu^\ba,q)=d(q-1)=\tau(q)$ for $q\in[1,2]$, hence $q_{\max}=2$.

If $s<d$, we have $\tau(2)=D(2)<d (2-1)=d$, so $q_{\max}>2$. The value of $\tau(\mu^\ba,\cdot)$ over $[1,q_{\min})$ and $[q_{\min},q_{\max}]$ is obtained again thanks to Theorems~\ref{thm-1.2} and \ref{corextension}.

For the validity of the multifractal formalism, at $\alpha=d$ it comes from the fact that $\tau'(\mu^\ba,1)$ exists and equals $d$ (see \cite{Ngai97}).

Since $\tau(\mu^\ba,\cdot)$ coincides with $D$ and $\tau$ over the open interval $(q_{\min},q_{\max})$, we can use \cite{Fen07} and Remark~\ref{rem-6.5} to have the validity of the multifractal formalism, for $\L^{md}$-a.e. $\ba\in\R^{md}$,  for all  $\alpha \in (D'(q_{\max}),D'(q_{\min}))$.  If $\alpha=D'(q_{\min})=\tau'(\mu^\ba,q_{\min}+)$, we have $\alpha q_{\min}-D(q_{\min})\le d$ and we can use the same proof as that of Theorem~\ref{thm-1.3}(ii) when $k=0$, since now the singular values function $\phi^s(T)$ simplifies to be  $\alpha_1(T)^s$ for all $s>0$ and is multiplicative. We can do the same at $\alpha=D'(q_{\max})$.

(3) By concavity of $D$, we have $\tau(q)=D(q)\le D(1)(q-1)<d(q-1)$ for all $q>1$, so $q_{\max}>2$. Moreover, by using Theorem~\ref{thm-1.2} as above we get that  for $\L^{md}$-a.e. $ \ba\in\R^{md}$, we have  $ \tau(\mu^\ba,q)=D(q)$ on $[1,q_{\max}]$. The validity of the multifractal formalism over $[D'(q_{\max}),D'(1)]$ is obtained as above over $[D'(q_{\max}),D'(\tilde{q}_{\min}]$.

The inequality $D'(1)<d$ also implies  $\tilde{q}_{\min}\in [0,1)$. Moreover, if $q\in (\tilde{q}_{\min},1)$, by concavity of $D$, $D'(q)q-D(q)<d$ implies that  $D(q)>d(q-1)$, so that $\tau(q)=D(q)$; consequently, by Lemma~\ref{lem-5.1} we have $ \tau(\mu^\ba,q)\ge D(q)$  for $\L^{md}$-a.e. $ \ba\in\R^{md}$, for all $q\in [\tilde{q}_{\min},1)$. Then, we can use the same argument as that used to prove Theorem~\ref{thm-1.3}(i) (noting again that the singular values function simplifies to be $\alpha_1(T)^s$) to get that for each $\alpha=D'(q)$, $q\in  [\tilde{q}_{\min},1)$, we have $ \alpha q-D(q)\le \dim E(\mu^\ba,D'(q))\le \alpha q-\tau^\ba (q)\le \alpha q-D(q)$, for $\L^{md}$-a.e. $ \ba\in\R^{md}$.

This yields that for $\L^{md}$-a.e. $ \ba\in\R^{md}$,  $\tau(\mu^\ba,q)= D(q)$ for all $q\in [\tilde{q}_{\min},1]$. Now, if $\tilde{q}_{\min}>0$, then by definition of $\tilde{q}_{\min}$ the tangent to $D$ at $(\tilde{q}_{\min},D(\tilde{q}_{\min}))$ crosses the $y$-axis at $(0,-d)$, so since $\tau(\mu^\ba,\cdot)$ is concave and $\tau(\mu^\ba,0)\ge -d$, $\tau(\mu^\ba,\cdot)$ must take the linear expression of the statement over $[0,\tilde{q}_{\min})$.
\end{proof}

In the remainder  of this section, we  provide a formula of the $L^q$-spectrum  for certain ``almost all'' non-overlapping planar self-affine measures over a range $\supseteq [0,2]$.

\begin{de}
Following \cite{FeWa05}, we say that an IFS $\{S_i\}_{i=1}^m$ on $\R^2$ satisfies the {\it rectangular open set  condition} (ROSC)  if there exists an open rectangle $R=(0, r_1)\times (0, r_2)+v$ such that $S_i(R)$ ($1\leq i\leq m$) are disjoint subsets of $R$.
\end{de}

\begin{ex}\label{e-6.C} Assume that  $T_1=T_2=\ldots=T_m={\rm diag} (t_1, t_2)$ with $1/2>t_1>t_2$. Let ${\bf p}=(p_1,\ldots, p_m)$ be a probability vector.  For $$\bc=((a_1, b_1), \ldots, (a_m, b_m))\in \R^{2m},$$
let $\mu^\bc$ denote the self-affine measure associated with the IFS $\{T_i+(a_i, b_i)\}_{i=1}^m$ on $\R^2$ and  the probability vector ${\bf p}$.
Denote by $V$ the set of points $\bc\in \R^{2m}$ so that  $\{T_i+(a_i, b_i)\}_{i=1}^m$  satisfies the ROSC.  By \cite[Theorem 2]{FeWa05},
for any $\bc\in V$,
\begin{equation}
\label{e-6.d}
\tau(\mu^\bc, q)=\tau(\nu^\ba, q)\left(1-\frac{\log t_1}{\log t_2}\right)+\frac{\log \sum_{i=1}^m p_i^q}{\log t_2}, \qquad\; \forall\; q>0,
\end{equation}
where $\nu^\ba$ denotes the self-similar measure associated with the IFS $\{t_1x+a_i\}_{i=1}^m$ and ${\bf p}$,  $\tau(\nu^\ba, q)$ denotes the $L^q$-spectrum of $\nu^\ba$. Denote by
$B(q)={\log \sum_{i=1}^m p_i^q}/{\log t_1}$. Let $q_{\max}=\max\{2, q_1\}$ where $q_1$ is the unique positive number satisfying  $B(q_1)=t_1$.
 By Theorem~\ref{thm-6.4},  if $B'(1)\geq 1$, then for $\L^m$-a.e $\ba\in \R^m$, $\tau(\nu^\ba, q)=q-1$ for every $0\leq q\leq 1$; meanwhile if  $B'(1)< 1$, then
 for $\L^m$-a.e $\ba\in \R^m$,
 $$
 \tau(\nu^\ba, q)=\left\{\begin{array}{ll}B'(q_0)q-1 &\mbox{ if } q\in [0, q_0],\\
 B(q) &\mbox{ if } q\in (q_0,1],
 \end{array}\right.
 $$
 where $q_0:=\inf\{q>0: B'(q)q-B(q)\leq 1\}$. Furthermore, by Theorem \ref{corextension}, we have for $\L^m$-a.e $\ba\in \R^m$,
 \begin{equation}
 \label{e-6.e}\tau(\nu^\ba, q)=\max\{B(q), q-1\},\quad \forall \; q\in (1, q_{\max}).
 \end{equation}
  Now one obtains the exact formula of  $\tau(\mu^\bc, q)$ by \eqref{e-6.d} for  $\L^m$-a.e $\bc\in V$ and every $q\in [0,q_{\max}]$.

 \begin{rem} According to the formula \eqref{e-6.e}  in Example \ref{e-6.C}, it is easy to  see that for each $q\in (1,2)$, one can choose $m\in \N$, $t_1\in (0,1/2)$ and  ${\bf p}=(p_1,\ldots, p_m)$ so that for $\L^m$-a.e $\ba\in \R^m$, $\tau(\nu^\ba, q)$ is not differentiable at $q$. Hence for any $q\in (1,2)$, there exists a self-similar measure on $\R$ whose the $L^q$ spectrum is not differentiable at $q$.
 \end{rem}

\end{ex}

\section{Final remarks}\label{S-7}

In this section we first give two remarks about the extensions of our results.

\begin{itemize}

\item[(i)]
All the results presented in this paper hold if we replace the Bernoulli measure $\mu$ by a Gibbs measure associated to a potential satisfying the bounded distortion property. This is due to the  almost multiplicative property of such a measure. The corresponding expression of $D(q)$ can be found in \cite{Fal99}.

\medskip

\item[(ii)]
 Our results can be partially extended to the projections of Bernoulli measures and Gibbs measures on the model of randomly perturbed self-affine attractors introduced in \cite{JPS07}. For such a construction, the condition $\|T_i\|<1/2$ for all $1\le i\le m$ can be relaxed to $\|T_i\|<1$ for all $1\le i\le m$.  Moreover, Falconer's formula extends to $[2,\infty)$ \cite{Fal10}. Then, mimicking the proofs written in the present paper, Theorem~\ref{thm-1.3}(i) holds as well as Theorem~\ref{thm-1.3}(ii) for all $q>2$ under the constraint that $k=0$. We don't know whether this extension can  pass to $k>0$, because it seems non trivial to transpose the arguments developed in Proposition~\ref{pro-4.1} and Lemma~\ref{lem-5.7} in relation with the equivalence to the Lebesgue measure for the measures under consideration. In the special case of almost self-similar measures, the validity of Falconer's formula over $[2,\infty)$ implies that the results of Theorem~\ref{thm-6.4} hold if, when $D'(1)\ge 1$, one sets $q_{\max}=\infty$ and $s=\inf\{D(q)/(q-1):q>1\}$.

\end{itemize}

In the end, we point out that in a related paper \cite{JoSi07} Jordan and Simon  studied the multifractal structure  of  Birkhoff averages on almost all self-affine sets.

\appendix
\section{Concavity properties of the functions $D$ and $\tau$. }\label{S-8}
It follows from the study of the $L^q$-spectrum of almost self-affine measures achieved in \cite{Fal10} that $\tau$ is concave over $(1,\infty)$. However, this fact is not obtained directly from the definition of $D(q)$.  Our Theorem~\ref{thm-1.3}(i) requires concavity properties of $D$ for $q\in (0,1)$ which cannot be reached by the approach used in \cite{Fal10}. In the following we provide a proof of these properties, and for the sake of completeness, a direct proof of the concavity of $\tau$ over $(1,\infty)$.

\begin{pro}\label{Dconcavity}
The mapping $D$ is concave over the intervals of those $q\neq 1$ such that $D(q)/(q-1)\in(k,k+1)$ for some integer $0\le k\le d-1$.
\end{pro}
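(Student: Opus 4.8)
The plan is to express, on an interval $J$ where $D(q)/(q-1)\in(k,k+1)$, the function $D$ implicitly through the pressure equation \eqref{e-3.3}, and then deduce concavity from the fact that pressure of a subadditive (here additive modulo the $\phi^s$-term) potential is convex in its parameters. Concretely, from \eqref{e-3.1} and \eqref{e-3.6}, for $q\in J$ the number $s=D(q)$ is the unique real solution of
$$
\Lambda(q,s):=\lim_{n\to\infty}\frac1n\log\sum_{I\in\Sigma_n}\exp\Big((1-q)\big(\phi^k_*\!-\!k\lambda_{k+1}\big)(T_I)-s\,\lambda_{k+1}(T_I)+q\log p_I\Big)=0,
$$
where I abbreviate $\phi^k_*(T_I)=\log\phi^k(T_I)$, $\lambda_{k+1}(T_I)=\log\alpha_{k+1}(T_I)$, and the linearity in $s$ on the range $s/(q-1)\in(k,k+1)$ is exactly \eqref{e-3.6}. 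The first step is therefore to check that $(q,s)\mapsto\Lambda(q,s)$ is a well-defined finite convex function on the relevant open region of $\R^2$: this is a standard consequence of Hölder's inequality applied to the sum $\sum_I(\cdots)$, the only subtlety being that $\phi^k_*(T_I)$ and $\lambda_{k+1}(T_I)$ are themselves only subadditive, so one argues with $\phi^k(T_I)$ and $\phi^{k+1}(T_I)$ directly (both submultiplicative) rather than with their increments.

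The second step is to pass from convexity of $\Lambda$ to concavity of the implicit solution $s=D(q)$. Here I would use the elementary fact: if $\Lambda$ is convex, $\Lambda(q,D(q))=0$, and $\partial\Lambda/\partial s<0$ along the curve (which holds since increasing $s$ strictly decreases every summand, because $\lambda_{k+1}(T_I)<0$ — note $\alpha_{k+1}(T_I)<1$ as $\|T_i\|<1/2$), then $D$ is concave. The cleanest route is the one already used in Proposition~\ref{pro-3.3}: fix $q_0\in J$, pick an equilibrium state $\eta_0\in\I(G_{q_0})$, and note that the variational principle (Proposition~\ref{pro-2.1}) gives $\Lambda(q,s)\ge h_{\eta_0}(\sigma)+(1-q)(\phi^k_*(\eta_0)-k\lambda_{k+1}(\eta_0))-s\lambda_{k+1}(\eta_0)+q\int f\,d\eta_0$ for all $(q,s)$, with equality at $(q_0,D(q_0))$. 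The right-hand side is an affine function of $(q,s)$; setting it to $0$ and solving for $s$ produces an affine (hence in particular concave) function $q\mapsto \ell_{q_0}(q)$ with $\ell_{q_0}(q_0)=D(q_0)$ and, since $\Lambda(q,D(q))=0\le\Lambda(q,\ell_{q_0}(q))$ combined with $\partial\Lambda/\partial s<0$, one gets $D(q)\le \ell_{q_0}(q)$ for all $q\in J$. Thus $D$ lies below each of its "tangent lines'' $\ell_{q_0}$ on $J$, which is exactly concavity on $J$.

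The main obstacle I anticipate is purely bookkeeping: making sure the potential $\{(1-q)\log\phi^{D(q)/(q-1)}(T_{x|n})\}_n$ really is asymptotically subadditive on $J$ (assumption \eqref{e-3.2}), so that Proposition~\ref{pro-2.1} and Lemma~\ref{lem-3.2} apply with the stated equilibrium states. For $0<q<1$ this is automatic by submultiplicativity of $\phi^s$; for $q>1$ it is where the diagonal hypothesis on the $T_i$ in Theorem~\ref{thm-1.3}(ii) enters, since then $g_{n,q}$ is genuinely additive and everything above goes through verbatim, with $\phi^k_*(\cdot)$ and $\lambda_{k+1}(\cdot)$ even continuous on $\M(\Sigma,\sigma)$. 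A secondary point to handle carefully is the endpoint behaviour: the argument gives concavity on the \emph{open} interval $\{q:D(q)/(q-1)\in(k,k+1)\}$, and one should remark (as the paper does when invoking one-sided derivatives) that concavity on each such open subinterval is all that is needed, the global function $D$ failing to be concave precisely across the breakpoints where $D(q)/(q-1)$ hits an integer.
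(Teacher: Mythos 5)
Your route is genuinely different from the paper's: you characterize $D$ implicitly through the pressure equation \eqref{e-3.3}, linearized via \eqref{e-3.6}, and deduce concavity by producing, at each $q_0$, a supporting affine function $\ell_{q_0}\ge D$ built from an equilibrium state $\eta_0\in\I(G_{q_0})$ together with the Gibbs-inequality direction of the variational principle. For the subintervals of $(0,1)$ this works (two caveats below), but it does not prove the proposition as stated. The statement covers the intervals of $q>1$ with $D(q)/(q-1)\in(k,k+1)$ for \emph{arbitrary} non-singular contractions $T_i$, and this case is actually needed later (the proof of Proposition~\ref{tauconcavity} starts from Proposition~\ref{Dconcavity} with no diagonality hypothesis). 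Your argument, however, hinges on Proposition~\ref{pro-2.1}, Lemma~\ref{lem-3.2} and the existence of equilibrium states, all of which require the asymptotic subadditivity assumption \eqref{e-3.2}; for $q>1$ the coefficient $1-q$ is negative, so $G_q$ is superadditive in general, and \eqref{e-3.2} is only known under the extra hypotheses (equal or diagonal $T_i$) that you yourself invoke. So for $q>1$ and general $T_i$ your proof has a real gap: neither the equality $P(\sigma,G_q)=\sup_\eta\{h_\eta(\sigma)+(G_q)_*(\eta)\}$ nor the existence of a maximizing $\eta_0$ is available from the cited results. The paper's proof avoids thermodynamic formalism entirely: for $q,q'$ in the same interval and exponents $s,s'\in(k,k+1)$ one has the exact identity $\phi^{s_\lambda}(T_I)^{1-q_\lambda}p_I^{q_\lambda}=\bigl(\phi^{s}(T_I)^{1-q}p_I^{q}\bigr)^{1-\lambda}\bigl(\phi^{s'}(T_I)^{1-q'}p_I^{q'}\bigr)^{\lambda}$, and H\"older's inequality applied to $\sum_{I\in\Sigma_n}$ gives $D(q_\lambda)\ge(1-\lambda)D(q)+\lambda D(q')$ directly, uniformly in $q\lessgtr 1$ and with no structural assumption on the $T_i$. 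That is what your approach should be measured against: it buys a conceptual picture (supporting lines coming from equilibrium states, consistent with Proposition~\ref{pro-3.3}), but at the price of machinery that is not available on part of the stated range.

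Two further points on the part that does work. First, there is a sign slip: with your normalization the $s$-dependence of each summand is $\exp(-s\lambda_{k+1}(T_I))$ with $\lambda_{k+1}(T_I)=\log\alpha_{k+1}(T_I)<0$, so $\Lambda(q,\cdot)$ is strictly \emph{increasing} in $s$, not decreasing as you assert; fortunately the inequality you then use, $D(q)\le\ell_{q_0}(q)$, is precisely the one that follows from the correct (increasing) monotonicity together with $\Lambda(q,\ell_{q_0}(q))\ge 0=\Lambda(q,D(q))$, so the conclusion survives once the sign is corrected, but as written the deduction is internally inconsistent (with a genuinely decreasing $\Lambda$ you would be proving convexity). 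Second, when you invoke ``the variational principle'' to get $\Lambda(q,s)\ge h_{\eta_0}(\sigma)+(\text{affine in }(q,s))$ for \emph{all} $(q,s)$, note that at the points $(q,\ell_{q_0}(q))$ the exponent $\ell_{q_0}(q)/(q-1)$ need not lie in $(k,k+1)$, so this is not an application of Proposition~\ref{pro-2.1} to the original potential; you should justify it by the elementary Gibbs/Jensen inequality for the linearized expression (which is fine, since the averages of $\log\phi^k$ and $\log\phi^{k+1}$ converge for invariant measures), and likewise make explicit that $\lambda_{k+1}(\eta_0)\le\log\max_i\|T_i\|<0$ so that $\ell_{q_0}$ is well defined.
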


\begin{pro}\label{tauconcavity}
The mapping $\tau$ is concave over $(1,\infty)$.
\end{pro}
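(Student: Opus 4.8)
The plan is to derive the concavity of $\tau$ on $(1,\infty)$ directly from the definition of $D(q)$, by reinterpreting $\tau(q)$ as a topological pressure and invoking the variational principle of Proposition \ref{pro-2.1}. Recall that for $q>1$ we have $\tau(q)=(q-1)\min\{D(q)/(q-1),d\}=\min\{D(q),d(q-1)\}$; since the minimum of two concave functions need not be concave, the crux is to show that $D$ itself is concave on $(1,\infty)$ and that, moreover, the full expression $\min\{D(q),d(q-1)\}$ inherits concavity. (On the range where the singular value function is in the regime $s\ge d$ one has $\phi^s(T)=(\det T)^{s/d}$, which makes $\log\phi^s(T_I)$ linear in $s$; this is exactly why $\tau$, rather than $D$, is the quantity that ends up globally concave.)

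First I would fix $q>1$ and recall from \eqref{e-3.1}--\eqref{e-3.3} that $\tau(q)$ is characterized, via the normalization, as the unique $s$ such that $P(\sigma,\widetilde{G}_{q,s})=0$, where $\widetilde{G}_{q,s}=\{\psi_{n,q,s}\}$ with
\begin{equation*}
\psi_{n,q,s}(x)=(1-q)\log\phi^{\min\{s/(q-1),\,d\}}(T_{x|n})+q\sum_{k=0}^{n-1}f(\sigma^k x).
\end{equation*}
The key observation is that $s\mapsto \log\phi^{\min\{s/(q-1),d\}}(T_{x|n})$ is \emph{concave} in $s$ for every fixed $x,n$: on $s/(q-1)\in[k,k+1]$ it is affine with slope $\frac{1}{q-1}\log\alpha_{k+1}(T_{x|n})$, and these slopes are nonincreasing in $k$ because $\alpha_1\ge\cdots\ge\alpha_d$; beyond $s=d(q-1)$ it is constant. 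Hence (with $q>1$, so $1-q<0$) each $\psi_{n,q,s}$ is convex in $s$, the finite sums $\frac1n\log\sum_{I}\sup_{[I]}\exp(\psi_{n,q,s})$ are convex in $s$ as sups of log-sum-exp of convex functions, and therefore $s\mapsto P(\sigma,\widetilde{G}_{q,s})$ is convex and nonincreasing. Next I would make the dependence jointly in $(q,s)$ explicit through the variational principle: $P(\sigma,\widetilde{G}_{q,s})=\sup_{\eta}\big(h_\eta(\sigma)+q\int f\,d\eta+(1-q)\,\phi^{\min\{s/(q-1),d\}}_*(\eta)\big)$. For the concavity of $\tau$ one argues as in a standard Legendre-duality / implicit-function computation: take $q_0,q_1>1$, $s_i=\tau(q_i)$, and for $\lambda\in[0,1]$ put $q_\lambda=(1-\lambda)q_0+\lambda q_1$; one must show $P(\sigma,\widetilde{G}_{q_\lambda,\,(1-\lambda)s_0+\lambda s_1})\le 0$, which by monotonicity forces $\tau(q_\lambda)\ge(1-\lambda)s_0+\lambda s_1$. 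This reduces to checking that the integrand $h_\eta(\sigma)+q\int f\,d\eta+(1-q)\phi^{\min\{s/(q-1),d\}}_*(\eta)$ is jointly concave in $(q,s)$ on $\{q>1\}$, uniformly over $\eta$, which in turn follows from joint concavity of the scalar map $(q,s)\mapsto (1-q)\,\phi^{\min\{s/(q-1),d\}}_*(\eta)$; the latter I would verify by the same slope-monotonicity argument applied to the piecewise-linear (in the pair $(q,s)$) structure, using that on each region $s/(q-1)\in[k,k+1]$ this equals $(1-q)(\lambda_1(\eta)+\cdots+\lambda_k(\eta))-(s-k(q-1))\lambda_{k+1}(\eta)$, a function whose Hessian is easily seen to be negative semidefinite, and on $s\ge d(q-1)$ it equals $(1-q)\phi^d_*(\eta)$, affine.

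The main obstacle I anticipate is not the analytic machinery but the careful bookkeeping at the \emph{breakpoints}: the map $s/(q-1)$ sweeps across the integers $1,2,\dots,d$ as $(q,s)$ varies, and one must check that the piecewise-affine function glues together into a genuinely jointly concave function rather than merely being concave on each piece. This requires verifying that the one-sided gradients satisfy the correct inequality across every facet separating two adjacent linear regions, i.e. that the gradient jumps in the ``concave direction.'' This is where $\lambda_1(\eta)\ge\lambda_2(\eta)\ge\cdots\ge\lambda_d(\eta)$ and the sign $q-1>0$ are used in an essential way, and it must be done uniformly in $\eta$ so that the supremum in the variational principle preserves concavity. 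Once that is settled, the conclusion $\tau''(q+)\le \tau''(q-)$ (in the sense of one-sided derivatives, which exist by the remark following \eqref{e-1.2}) at every $q>1$ follows, and since $\tau$ is continuous and concave on each subinterval between consecutive breakpoints with matching concave gluing, it is concave on all of $(1,\infty)$.
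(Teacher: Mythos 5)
The step your entire reduction rests on --- joint concavity in $(q,s)$, uniformly in $\eta$, of $(q,s)\mapsto(1-q)\,\phi_*^{\min\{s/(q-1),d\}}(\eta)$ --- is false, and you in fact contradict it yourself two sentences earlier: as you correctly note, for fixed $q>1$ the map $s\mapsto(1-q)\phi_*^{s/(q-1)}(\eta)$ is \emph{convex} in $s$, since by \eqref{e-3.6} its slope on the piece $s/(q-1)\in[k,k+1]$ equals $-\lambda_{k+1}(\eta)$ and $\lambda_1(\eta)\ge\cdots\ge\lambda_d(\eta)$ makes these slopes nondecreasing in $k$. A function convex and non-affine on the vertical lines $q=\mathrm{const}$ cannot be jointly concave; equivalently, across the facet $s=k(q-1)$ the $s$-slope jumps from $-\lambda_k(\eta)$ up to $-\lambda_{k+1}(\eta)$, i.e.\ in the convex direction, so the gluing check you defer to fails at every breakpoint. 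Concretely, with $d=2$, $\lambda_1(\eta)=-1$, $\lambda_2(\eta)=-2$ and $q$ fixed, the function equals $s$ for $0\le s\le q-1$ and $2s+1-q$ for $q-1\le s\le 2(q-1)$, which is convex. Hence $P(\sigma,\widetilde G_{q_\lambda,s_\lambda})\le 0$ cannot be obtained by pushing convex combinations through the variational expression term by term, and the argument collapses exactly at the points where $D(q)/(q-1)$ crosses an integer --- which is precisely where the real work lies: the paper handles the open pieces by a H\"older estimate on the sums $\sum_{I\in\Sigma_n}\phi^s(T_I)^{1-q}p_I^q$ (Proposition~\ref{Dconcavity}) and the breakpoints by a separate, sign-sensitive comparison using Lemma~\ref{lem-7.3} and $\alpha_k(T_I)\ge\alpha_{k+1}(T_I)$ together with $(s_1-k)(1-q_1)\ge 0$; no pointwise-in-$\eta$ concavity of an auxiliary two-variable function is true there. (A side remark: your framing that ``the minimum of two concave functions need not be concave'' is also incorrect --- the pointwise minimum of concave functions is concave; it is $D$ whose concavity on all of $(1,\infty)$ is the issue.)

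There is a second, independent obstruction: Proposition~\ref{pro-2.1} is a variational principle for asymptotically \emph{sub-additive} potentials, whereas for $q>1$ the sequence $(1-q)\log\phi^{u}(T_{x|n})$ is in general super-additive; this is exactly hypothesis \eqref{e-3.2}, which the paper only \emph{assumes} and which for $q>1$ is known to hold only under extra conditions (equal $T_i$, or diagonal $T_i$ with distinct entries). So in the generality of the proposition neither the identity ``$P(\sigma,\widetilde G_{q,s})=0$ at $s=\tau(q)$'' nor the variational formula for $P$ is available; note also that for $q>1$ the pressure is nondecreasing, not nonincreasing, in $s$, and the zero-of-pressure characterization describes $D(q)$ rather than $\tau(q)$ when $D(q)/(q-1)>d$. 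The paper's proof deliberately avoids the thermodynamic formalism altogether and argues directly, by H\"older's inequality, on the defining sums --- which is why it applies to arbitrary non-singular contractive $T_i$.
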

\noindent
{\it Proof of Proposition~\ref{Dconcavity}.}
 It is clear from \eqref{e-3.1} and the fact that both $p_I$ and $\phi^s$ $(s> 0)$ are bounded away from 0 and $\infty$ by geometric sequences that $D(q)$ is continuous. So if $0\le k\le d-1$ is an integer, the set $J_k$ of those $q\in (0,1)$ such that  $D(q)/(q-1)\in(k,k+1)$ is an interval, as well as the set $J'_k$ of those $q\in(1,\infty)$ with the same property.

 Let us deal with $J_k$. The case of $J'_k$ is similar.  Fix $q,q'\in J_k$ and $\lambda\in (0,1)$. Pick $s, s'$ so that   $D(q)/(q-1)<s<k+1$, $D(q')/(q'-1)<s'<k+1$.
 Then
 \begin{equation}\label{eq0}
 \begin{split}
  &\limsup_{n\to\infty}\frac{1}{n}\log \sum_{I\in\Sigma_n}\phi^{s}(T_I)^{1-q}p_I^{q}\le 0,\\
  &\limsup_{n\to\infty}\frac{1}{n}\log \sum_{I\in\Sigma_n}\phi^{s'}(T_I)^{1-q'}p_I^{q'}\le 0,
  \end{split}
 \end{equation}
 Define
 $$
 q_\lambda =(1-\lambda)q+\lambda q ',\quad s_\lambda = \frac{(1-\lambda)(q-1)s+\lambda(q'-1)s'}{q_\lambda-1}.
 $$ If we prove that
 \begin{equation}\label{eq}
 \limsup_{n\to\infty}\frac{1}{n}\log \sum_{I\in\Sigma_n}\phi^{s_\lambda}(T_I)^{1-q_\lambda}p_I^{q_\lambda}\le 0,
 \end{equation}
then by definition of $D(q_\lambda)$, we have
$$
\frac{D(q_\lambda)}{q_\lambda-1}\le  s_\lambda =  \frac{(1-\lambda)(q-1)s+\lambda(q'-1)s'}{q_\lambda-1}$$
for all $s,s'$ has above, so
$$
D(q_\lambda)\ge  (1-\lambda)D(q)+\lambda D(q').
$$
Now we prove \eqref{eq}.  By construction we have  $k<s_\lambda<k+1$, so
\begin{eqnarray*}
\sum_{I\in\Sigma_n}\phi^{s_\lambda}(T_I)^{1-q_\lambda}p_I^{q_\lambda}&=&\sum_{I\in\Sigma_n}\big (\phi^{s}(T_I)^{1-q}p_I^{q}\big )^{1-\lambda}\big (\phi^{s'}(T_I)^{1-q'}p_I^{q'}\big )^{\lambda}\\
&\le &\Big( \sum_{I\in\Sigma_n}\phi^{s}(T_I)^{1-q}p_I^{q}\Big)^{1-\lambda} \Big (\sum_{I\in\Sigma_n}\phi^{s'}(T_I)^{1-q'}p_I^{q'}\Big)^{\lambda},
\end{eqnarray*}
where the second inequality comes from the H\"{o}lder's inequality.
This together with \eqref{eq0} yields \eqref{eq}. $\square$

\begin{lem}\label{lem-7.3}
Let $q_0>1$ such that $D(q_0)/(q_0-1)=k$ for some integer $k\in\{1,2,\ldots,d\}$. Then
$$
\frac{D(q)}{q-1}\le k\text{ if }q>q_0\quad \text{and}\quad \frac{D(q)}{q-1}\le k\text{ if }q<q_0.
$$
\end{lem}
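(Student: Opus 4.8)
The plan is to exploit the characterization of $D(q)$ via equation \eqref{e-3.1}, namely that for $q\neq 1$, $D(q)$ is the unique $s\in\R$ with $\lim_{n\to\infty}\frac1n\log\sum_{I\in\Sigma_n}\phi^{s/(q-1)}(T_I)^{1-q}p_I^q=0$, together with the behaviour of the singular value function $\phi^t$ when its exponent crosses the integer $k$. The key observation is that when $D(q_0)/(q_0-1)=k$ exactly, we have $\phi^{k}(T_I)=\alpha_1(T_I)\cdots\alpha_{k-1}(T_I)\alpha_k(T_I)$, and for $I\in\Sigma_n$ all singular values $\alpha_i(T_I)$ are bounded by $\|T_I\|^{\text{(number of factors)}}$-type geometric bounds; more to the point, $\alpha_k(T_I)\le 1$, so $\phi^{t}(T_I)$ behaves monotonically in $t$ in a controlled way near $t=k$.

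First I would record the defining relation at $q_0$: $\lim_{n\to\infty}\frac1n\log\sum_{I\in\Sigma_n}\phi^{k}(T_I)^{1-q_0}p_I^{q_0}=0$, using $D(q_0)/(q_0-1)=k$. Next, for $q>q_0$ I want to show $\limsup_{n\to\infty}\frac1n\log\sum_{I\in\Sigma_n}\phi^{k}(T_I)^{1-q}p_I^{q}\le 0$, which by the definition \eqref{e-1.1} of $D$ (as a supremum, for $q>1$) forces $D(q)/(q-1)\le k$. To get this, write $\phi^{k}(T_I)^{1-q}p_I^q=\bigl(\phi^{k}(T_I)^{1-q_0}p_I^{q_0}\bigr)\cdot\bigl(\phi^{k}(T_I)^{-1}p_I\bigr)^{q-q_0}$. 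Since $\|T_i\|<1$ for all $i$ we have $p_I\le c^{-n}$ type bounds and $\phi^{k}(T_I)\ge$ (product of $k$ smallest-type singular values) which decays geometrically; actually the clean inequality I need is $\phi^{k}(T_I)^{-1}p_I\le 1$ for $|I|=n$ large, or at worst $\le \rho^n$ for some $\rho$, which would only help. The point is that the extra factor $\bigl(\phi^{k}(T_I)^{-1}p_I\bigr)^{q-q_0}$ is at most $1$ (raised to a positive power $q-q_0$), so the sum at $q$ is dominated termwise by the sum at $q_0$, giving the desired $\limsup\le 0$. Hmm — but this requires $\phi^{k}(T_I)\ge p_I$, which is not obvious; more robustly, I would instead argue via Hölder/convexity: $D(q)/(q-1)$ is determined by where the pressure-type limit vanishes, and monotonicity of $t\mapsto \phi^t$ combined with the concavity of $D$ on the relevant subintervals (Proposition~\ref{Dconcavity}) should be leveraged. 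The honest approach: use that $D(q)$ is concave on each $J'_k$, that $D(1)=0$ with the map $q\mapsto D(q)/(q-1)$ nonincreasing on $(1,\infty)$ away from integer-crossings (a standard fact following from convexity of $\log\sum_I\phi^{s}(T_I)^{1-q}p_I^q$ jointly in parameters), and that at $q_0$ this ratio equals $k$; then for $q>q_0$ the ratio stays $\le k$ by monotonicity, and likewise for $q<q_0$ near $q_0$ by the same monotonicity the ratio is $\ge k$ — wait, the statement claims $\le k$ on \emph{both} sides.

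Let me reconsider the statement. The claim is $D(q)/(q-1)\le k$ for $q>q_0$ and $D(q)/(q-1)\le k$ for $q<q_0$ — reading it literally, both one-sided conclusions are $\le k$. For $q<q_0$ with $q>1$ this says the ratio drops below $k$ as $q$ decreases; combined with $q>q_0 \Rightarrow {}\le k$, this means $D(q)/(q-1)$ has a local maximum value $k$ at $q_0$ relative to nearby points, which is exactly what a ``phase transition from $J'_k$ pattern'' looks like when the singular value function's active integer index changes: the function $s\mapsto \phi^s(T_I)$ has a concave kink (its $\log$ is piecewise linear and concave in $s$), so $\phi^{s}(T_I)^{1-q}$ for $q>1$ is log-convex with a kink, and the competition between the ``$k$-regime'' and ``$(k{-}1)$-regime'' (or ``$(k{+}1)$-regime'') branches produces this. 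Concretely I would split: for $q$ slightly less than $q_0$, the relevant inequality $\sum_I\phi^{s}(T_I)^{1-q}p_I^q$ summable needs to be tested at $s=k$; using $\phi^{s}(T_I)\le \phi^{k}(T_I)^{1}$ is false for $s<k$... I think the cleanest route is: show directly that $\limsup_n\frac1n\log\sum_{I\in\Sigma_n}\phi^k(T_I)^{1-q}p_I^q$ is a convex function of $q$ (by Hölder, as in the proof of Proposition~\ref{Dconcavity}), call it $\Lambda(q)$; we know $\Lambda(q_0)\le 0$ (in fact $=0$ by definition of $D(q_0)$ and the crossing); also $\Lambda(1)=\limsup_n\frac1n\log\sum_I p_I = 0$. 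A convex function that is $\le 0$ at $q_0$ and $=0$ at $1$ is $\le 0$ on the whole interval $[\min(1,q_0),\max(1,q_0)]$ and also for $q$ beyond $q_0$ on the far side IF we additionally know $\Lambda$ is eventually $\le 0$ there — but convexity alone gives $\Lambda\le 0$ only between $1$ and $q_0$. For $q>q_0$ I then separately use the termwise domination $\phi^k(T_I)^{1-q}p_I^q\le \phi^k(T_I)^{1-q_0}p_I^{q_0}$ valid because $\phi^k(T_I)^{-1}p_I = \alpha_1\cdots\alpha_k(T_I)^{-1}p_I\le 1$ when $n$ is large (here I use $p_i<\|T_i\|^{?}$... this needs $p_i\le$ product of singular values, which need NOT hold in general!).

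So the main obstacle — and where I'd spend the real effort — is establishing the monotonicity/domination that controls $q>q_0$ without an unjustified comparison between $p_I$ and $\phi^k(T_I)$. I expect the resolution is exactly the convexity argument I sketched plus the definition of $D$ as a \emph{supremum} for $q>1$: once we know $D(q_0)/(q_0-1)=k$ is the \emph{critical} exponent, i.e. the series diverges for $s>k$ at $q_0$, one shows for $q>q_0$ that the series at any $s>k$ still diverges, forcing $D(q)/(q-1)\le k$; divergence is an \emph{easier} direction and can be obtained by bounding $\phi^{s}(T_I)^{1-q}p_I^q$ \emph{below} by $\phi^s(T_I)^{1-q_0}p_I^{q_0}\cdot(\text{something}\ge \rho^n)$, again reducing to the $q_0$ statement. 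For $q<q_0$ (with $q>1$), the inequality $D(q)/(q-1)\le k$ likewise follows by testing $s$ slightly above $k$: one shows the series at such $s$ diverges for $q<q_0$, using that at $q_0$ it already diverges and a convexity-in-$q$ comparison pinning divergence on the side toward $1$ (where at $q=1$ the relevant normalized series is exactly $\sum p_I=1$, the borderline case). I would carry the steps in this order: (1) restate the critical-exponent characterization at $q_0$; (2) prove $\Lambda_s(q):=\limsup_n\frac1n\log\sum_I\phi^s(T_I)^{1-q}p_I^q$ is convex in $q$ for fixed $s$ via Hölder; (3) note $\Lambda_k(1)=0$ and $\Lambda_s(q_0)>0$ for $s>k$, $\le 0$ for $s\le k$; (4) deduce by convexity and these boundary values that for all $q$ with $q$ between $1$ and $q_0$, and for $q>q_0$ via the critical-exponent monotonicity, $\Lambda_s(q)>0$ whenever $s>k$; (5) conclude $D(q)\le k(q-1)$, i.e. $D(q)/(q-1)\le k$. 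The genuinely delicate point is step (4) for $q>q_0$, for which I anticipate needing the hypothesis $\|T_i\|<1/2$ (or at least $<1$) to control the ratio of consecutive singular values and ensure the $(k{+}1)$-st singular value factor, which would otherwise rescue convergence, stays subdominant.
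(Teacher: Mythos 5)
For the half $q>q_0$ your plan is essentially the paper's argument, and it does work. The paper fixes the exponent at $s=k$, uses the characterization \eqref{e-3.1} at $q_0$ (so that $\frac1n\log\sum_{I\in\Sigma_n}\phi^k(T_I)^{1-q_0}p_I^{q_0}\to 0$), and, arguing by contradiction from $\sum_{I\in\Sigma_n}\phi^k(T_I)^{1-q}p_I^q<e^{-n\delta}$ infinitely often, applies H\"older's inequality between the exponents $q$ and $1$ (where the sum is $\sum_{I\in\Sigma_n}p_I=1$) with the interpolation parameter chosen so that $(1-\lambda)q+\lambda=q_0$; this is exactly your ``convexity of $\Lambda_s(q)$ in $q$'' anchored at $q=1$, read as an extrapolation inequality beyond $q_0$. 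Your step from $\Lambda_k(q)\ge 0$ to divergence for every $s>k$ (hence $D(q)/(q-1)\le k$) via $\alpha_{k+1}(T_I)\le 2^{-n}$ is also fine, so the point you flag as ``genuinely delicate'' is in fact unproblematic and coincides with the published route.

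The genuine gap is in the other half. The printed ``$\le k$ for $q<q_0$'' is a typo: the paper's own proof establishes $D(q)/(q-1)\ge k$ for $1<q<q_0$, and this is the form actually used later (in the proof of Proposition~\ref{tauconcavity} one needs $D(q_0-\epsilon)/(q_0-\epsilon-1)\ge k$ and $\tau(q)=d(q-1)$ on $(1,q_0)$; the remark after the lemma states that the argument shows $q\mapsto D(q)/(q-1)$ is non-increasing on $(1,\infty)$). You noticed the oddity but then committed to the literal reading, and your step (4) for $q\in(1,q_0)$ cannot be repaired: convexity of $q\mapsto\Lambda_s(q)$ together with $\Lambda_s(1)=0$ and $\Lambda_s(q_0)>0$ gives only the chord \emph{upper} bound at interior points, never the lower bound $\Lambda_s(q)>0$ you need; and invoking monotonicity of $D(q)/(q-1)$ as a ``standard fact'' would be circular, since that monotonicity is precisely what this lemma is proving. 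Worse, the literal claim is false in general: e.g.\ for equal similarity ratios and non-uniform $(p_i)$ one has $D(q)/(q-1)=\log A(q)/\log t_1$ with $A(q)=(\sum_ip_i^q)^{1/(q-1)}$, which is strictly decreasing, so the ratio exceeds $k$ just to the left of $q_0$. The correct second half is the mirror image of the first: for $1<q<q_0$ pick $\lambda$ with $(1-\lambda)q_0+\lambda=q$ and apply H\"older between $q_0$ and $1$ to get $\sum_{I\in\Sigma_n}\phi^k(T_I)^{1-q}p_I^q\le\bigl(\sum_{I\in\Sigma_n}\phi^k(T_I)^{1-q_0}p_I^{q_0}\bigr)^{1-\lambda}\le e^{n(1-\lambda)\delta}$ for large $n$, i.e.\ $\Lambda_k(q)\le 0$; this yields convergence of the series for every $s<k$ and hence $D(q)/(q-1)\ge k$.
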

\begin{proof} First assume that $q>q_0$. To show that $D(q)/(q-1)\le k$, it suffices to show that
\begin{equation}\label{eq-7.1}
\forall \, \delta>0, \ \sum_{I\in\Sigma_n}\phi^k(T_I)^{1-q}p_I^q\ge e^{-n\delta}\quad \text{ for large enough }n.
\end{equation}
Assume that \eqref{eq-7.1} does not hold., i.e. there exists $\delta>0$ such that
$$
 \sum_{I\in\Sigma_n}\phi^k(T_I)^{1-q}p_I^q< e^{-n\delta}\quad  \text{ infinitely often (i.o). }
$$
Note that $\sum_{I\in\Sigma_n}p_I=1$. Take $\lambda\in (0,1)$ such that $(1-\lambda)q+\lambda=q_0$. Then, by the H\"older inequality
$$
 \sum_{I\in\Sigma_n}\phi^k(T_I)^{(1-\lambda)(1-q)}p_I^{(1-\lambda)q}p_I^\lambda\le e^{-n(1-\lambda)}\cdot 1^\lambda \quad \text{ i.o.}
 $$
 i.e.
 $$
  \sum_{I\in\Sigma_n}\phi^k(T_I)^{1-q_0}p_I^{q_0}\le e^{-n(1-\lambda)}\quad \text{ i.o.},
  $$
  a contradiction with our assumption on $D(q_0)/(q_0-1)$.

Next assume that $q<q_0$. To show that $D(q)/(q-1)\ge k$, it suffices to show that
\begin{equation*}
\forall \, \delta>0, \ \sum_{I\in\Sigma_n}\phi^k(T_I)^{1-q}p_I^q\le e^{n\delta}\quad \text{ for large enough }n.
\end{equation*}
To see this, since $D(q_0)/(q_0-1)=k$, we have
$$
\sum_{I\in\Sigma_n}\phi^k(T_I)^{1-q_0}p_I^{q_0}\le e^{n\delta}\quad \text{ for large enough }n.
$$
Take $\lambda\in (0,1)$ such that $(1-\lambda)q_0+\lambda=q$. Then, by the H\"older inequality
$$
 \sum_{I\in\Sigma_n}\phi^k(T_I)^{(1-\lambda)(1-q_0)}p_I^{(1-\lambda)q_0}p_I^\lambda\le e^{n(1-\lambda)}\cdot 1^\lambda
 $$
 i.e.
 $$
  \sum_{I\in\Sigma_n}\phi^k(T_I)^{(1-q_0)}p_I^{q_0}\le e^{n(1-\lambda)},
  $$
if $n$ is large enough,  as desired.
  \end{proof}

\begin{rem}
The same argument (with $k$ replaced by any positive number $s$ shows that $q\mapsto D(q)/(q-1)$ is non-increasing on $(1,\infty)$.
\end{rem}

\noindent
{\it Proof of Proposition~\ref{tauconcavity}.} Due to Proposition~\ref{Dconcavity}, it suffices to show that

(1) If $\displaystyle \frac{D(q_0)}{q_0-1}\in\{1,2,\ldots,d-1\}$ for some $q_0>1$, then  $D'(q_0+)\le D'(q_0-)$.

(2) If $\displaystyle \frac{D(q_0)}{q_0-1}=d$ for some $q_0>1$, then $D'(q_0+)\le d$ (by Lemma~\ref{lem-7.3}, $\tau(q)=d(q-1)$ if $1<q<q_0$).

Let us first prove (1). Assume on the contrary that (1) does not hold, i.e. $D'(q_0+)>D'(q_0-)$. Then there exists a small $\epsilon>0$ such that
$$
D(q_0)<\frac{1}{2} D(q_0+\epsilon)+\frac{1}{2} D(q_0-\epsilon),
$$
and
$$
\frac{D(q_0+\epsilon)}{q_0+\epsilon-1}\le k\le \frac{D(q_0-\epsilon)}{q_0-\epsilon-1}< k+1\quad (\text{by Lemma~\ref{lem-7.3}}).
$$
Let $s_1=\displaystyle \frac{D(q_0+\epsilon)}{q_0+\epsilon-1}$ and $s_2=\displaystyle \frac{D(q_0-\epsilon)}{q_0-\epsilon-1}$, $q_1=q_0+\epsilon$, $q_2=q_0-\epsilon$. Then, for all $\delta>0$ and $i\in \{1,2\}$,
$$
\sum_{I\in\Sigma_n}\phi^{s_i}(T_I)^{1-q_i}p_I^{q_i}\le e^{n\delta}\quad \text{ for large enough }n.
$$
By the H\"older inequality, we have
$$
\sum_{I\in\Sigma_n}\phi^{s_1}(T_I)^{(1-q_1)/2}p_I^{q_1/2}\phi^{s_2}(T_I)^{(1-q_2)/2}p_I^{q_2/2}\le e^{n\delta},
$$
i.e.
$$
\sum_{I\in\Sigma_n}\phi^{s_1}(T_I)^{(1-q_1)/2}\phi^{s_2}(T_I)^{(1-q_2)/2}p_I^{q_0}\le e^{n\delta},
$$
Note that
\begin{eqnarray*}
&&\phi^{s_1}(T_I)^{(1-q_1)/2}\phi^{s_2}(T_I)^{(1-q_2)/2}\\
&=&(\alpha_1\alpha_2\cdots\alpha_k)^{(1-q_1)/2}\alpha_k^{(s_1-k)(1-q_1)/2} \cdot (\alpha_1\alpha_2\cdots\alpha_k)^{(1-q_2)/2}\alpha_{k+1}^{(s_2-k)(1-q_2)/2} \\
&& \text{(where $\alpha_i=\alpha_i(T_I)$)}\\
&=&(\alpha_1\alpha_2\cdots\alpha_k)^{1-q_0}\alpha_k^{(s_1-k)(1-q_1)/2}\alpha_{k+1}^{(s_2-k)(1-q_2)/2}\\
&\ge &(\alpha_1\alpha_2\cdots\alpha_k)^{1-q_0}\alpha_{k+1}^{(s_1-k)(1-q_1)/2}\alpha_{k+1}^{(s_2-k)(1-q_2)/2} \quad \text{(using $(s_1-k)(1-q_1)\ge 0$)}\\
&=&(\alpha_1\alpha_2\cdots\alpha_k)^{1-q_0} \alpha_{k+1}^{-\frac{D(q_1)+D(q_2)}{2}-k(1-q_0)}\\
&\ge &(\alpha_1\alpha_2\cdots\alpha_k)^{1-q_0} \alpha_{k+1}^{-(D(q_0)+\gamma)-k(1-q_0)}\quad (\text{with $\gamma \gg\delta$})\\
&=&(\alpha_1\alpha_2\cdots\alpha_k)^{1-q_0} \alpha_{k+1}^{-\gamma}\\
&\ge& \phi^k(T_I)^{1-q_0}\cdot e^{n\gamma'}\quad (\text{with $\gamma' \gg\delta$})
\end{eqnarray*}
Therefore,
$$
\sum_{I\in\Sigma_n}\phi^{k}(T_I)^{(1-q_0)}p_I^{q_0}\le e^{-n(\gamma'-\delta)}\quad (\text{with $\gamma' \gg\delta$})
$$
for large enough $n$, a contradiction. This proves (1).

Next we show (2). To see this, recall that $D(q_0)/(q_0-1)=d$ and $D(q)/(q-1)\le d$ if $q>q_0$.
Now, since $D(q)/(q-1)$ is non increasing over $(1,\infty)$, either  $D(q)/(q-1)=d$ in a right neighborhood of $q_0$, or $D(q)/(q-1)<d$ for all $q>q_0$, and by Proposition~\ref{Dconcavity} $D$ is concave on a right neighborhood of $q_0$. Thus the inequality  $D(q)/(q-1)\le d$ for  $q>q_0$ implies $D'(q_0+)\le d$.

\noindent{\bf Acknowledgements}. Feng was partially supported by the RGC grant  and the Focused Investments Scheme  in CUHK.

\end{document}